\newcommand{\cGuv}{$\chi(G\backslash\{u,v\})\geq\chi(G)-1$}
\newcommand{\co}{$\chi(G)>\omega(G)$}
\def\d{\hbox{-}}
\def\l{,\ldots,}
\newtheorem{conjecture}{Conjecture}
\newtheorem{theorem}{}[section]
\newtheorem{lemma}[theorem]{}
\newtheorem{corollary}[theorem]{}
\newtheoremstyle{styleclaim}{}{}{\itshape}{}{}{}{ }{(\thmnumber{#2})}
\theoremstyle{styleclaim}
\newcounter{counter_claim}[theorem]
\newtheorem{claim}[counter_claim]{}
\newcommand{\cref}[1]{(\ref{#1})}
\begin{document}
\title{On the Erd\"os-Lov\'asz Tihany Conjecture for Claw-Free Graphs}
\date{\today}
\author{Maria Chudnovsky\thanks{Columbia University, New York, NY 10027, USA. E-mail: mchudnov@columbia.edu. Partially supported by NSF grants IIS-1117631 and DMS-1001091.} \and Alexandra Fradkin\thanks{Center for Communications Research, Princeton, NJ 08540, USA.} \and Matthieu Plumettaz\thanks{Columbia University, New York, NY 10027, USA. E-mail: mp2761@columbia.edu. Partially supported by NSF grant DMS-1001091.}}

\maketitle

\begin{abstract}
In 1968, Erdos and Lovasz conjectured that for every graph $G$ and all integers $s,t\geq 2$ such that $s+t-1=\chi(G) > \omega(G)$, there exists a partition $(S,T)$ of the vertex set of $G$ such that $\chi(G|S)\geq s$ and $\chi(G|T)\geq t$. For general graphs, the only settled cases of the conjecture are when $s$ and $t$ are small. Recently, the conjecture was proved for a few special classes of graphs: graphs with stability number $2$~\cite{quasi-line}, line graphs~\cite{line} and quasi-line graphs~\cite{quasi-line}. In this paper, we consider the conjecture for claw-free graphs and present some progress on it.
\end{abstract}

\section{Introduction}
In 1968, Erd\H{o}s and Lov\'asz made the following conjecture:

\begin{conjecture}[Erd\"os-Lov\'asz Tihany]\label{ELT}
For every graph $G$ with $\chi(G) > \omega(G)$ and any two integers $s,t \geq 2$ with $s+t=\chi(G)+1$, there is a partition $(S,T)$ of the vertex set $V(G)$ such that $\chi(G|S)\geq s$ and $\chi(G|T)\geq t$.
\end{conjecture}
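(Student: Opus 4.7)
The conjecture is open in general, so the plan below should be read as the natural line of attack rather than a complete proof. I would work with a minimum counterexample: among all triples $(G,s,t)$ with $s,t\geq 2$, $s+t=\chi(G)+1$, $\chi(G)>\omega(G)$ and no valid partition, pick one minimizing $|V(G)|$. Minimality forces $G$ to be $(s+t-1)$-vertex-critical, so in particular $\delta(G)\ge s+t-2$, and by a Gallai-type structural theorem the subgraph induced by the vertices of minimum degree is a disjoint union of odd cliques and odd cycles joined in a very constrained way. These are the handles on the global structure of $G$.

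The core step is to locate an induced subgraph $H\subseteq G$ with $\chi(H)\ge s$ such that $\chi(G\setminus V(H))\ge t$. Two candidates are natural. First, take an $s$-critical induced subgraph and use $\chi(G)>\omega(G)$ to keep $H$ small enough that a $t$-chromatic subgraph must survive in the complement. Second, fix an optimal $(s+t-1)$-coloring, let $S$ be the union of some $s-1$ color classes, and apply Kempe-chain recoloring to force $\chi(G|S)\ge s$ while preserving $\chi(G|T)\ge t$ on the remainder. In either approach the combinatorial goal is the same: convert the single-color gap $\chi(G)-\omega(G)\ge 1$ into an extra unit of chromatic number on one side of the partition.

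The main obstacle — and the reason the conjecture has resisted for over fifty years — is precisely this conversion. The only hypothesis separating Conjecture~\ref{ELT} from the trivially false analogue about cliques is $\chi(G)>\omega(G)$, and any direct splitting of color classes or of a critical subgraph tends to miss by exactly one. The hoped-for mechanism is that the absence of a $\chi(G)$-clique must force some vertex of $G$ to be ``color-flexible'' in every optimal coloring, and that this flexibility can be cashed in for the missing unit of chromatic number. Making this rigorous seems to require either a genuinely new invariant that detects the gap $\chi(G)-\omega(G)$, or a restriction to a structured class of graphs in which an explicit structure theorem lets one exhibit $H$ by hand. It is the second route that the present paper follows, specializing to claw-free $G$ where a decomposition theorem for the class can be used to drive the partition step.
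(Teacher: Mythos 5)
The statement you were asked about is Conjecture~\ref{ELT} itself, which is open; the paper does not prove it and explicitly says so, proving instead only the weakened claw-free version in Theorem~\ref{main} (existence of a Tihany clique of size at most $5$). You correctly recognize this, and your text is a plan of attack rather than a proof, so there is no argument here to certify. Read as a proof attempt, however, the central gap is that your ``core step'' --- finding an induced $H$ with $\chi(H)\geq s$ and $\chi(G\setminus V(H))\geq t$ --- is literally a restatement of the conjecture, and neither of your two candidate mechanisms closes it. Taking an $s$-critical subgraph $H$ gives no control on $\chi(G\setminus V(H))$: criticality bounds neither $|V(H)|$ nor the chromatic number of the complement, and the known counting arguments lose exactly the one unit you need. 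Likewise, starting from a union $S$ of $s-1$ color classes, any Kempe-chain exchange that raises $\chi(G|S)$ to $s$ risks lowering $\chi(G|T)$ below $t$, and no known invariant certifies that the hypothesis $\chi(G)>\omega(G)$ prevents this simultaneous loss; this is precisely where the settled cases $(s,t)\in\{(2,2),(2,3),(2,4),(3,3),(3,4),(3,5)\}$ required ad hoc arguments. Your final paragraph correctly describes what the present paper actually does: it restricts to claw-free graphs, uses the Chudnovsky--Seymour structure theorem (\ref{structure}) to split into cases, and in each case exhibits a small Tihany clique by hand via Lemmas such as \ref{basic} and \ref{W-1} --- but even there the output is the weaker clique-deletion statement, not the partition statement of the conjecture.
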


Currently, the only settled cases of the conjecture are $(s,t) \in \{(2,2), (2,3), (2,4), $ $ (3,3), (3,4), (3,5)\}$~\cite{BrownJung, Mozhan, StiebitzK5, Stiebitz}. Recently, Balogh et. al. proved Conjecture~\ref{ELT} for the class of graphs known as \emph{quasi-line graphs} (a graph is a quasi-line graph if for every vertex $v$, the set of neighbors of $v$ can be expressed as the union of two cliques).  In this paper we consider a class of graphs that is a proper superset of the class of quasi-line graphs: \emph{claw-free graphs}. We prove a weakened version of Conjecture~\ref{ELT} for this class of graphs.  Before we state our main result we need to set up some notation and definitions.

In this paper all graphs are finite and simple. Given a graph $G$, let $V(G)$, $E(G)$ denote the set of vertices and edges of $G$, respectively.  A $k$-coloring of $G$ is a map $c:V(G) \rightarrow \{1,\dots,k\}$ such that for every pair of adjacent vertices $v,w \in V(G)$, $c(v) \neq c(w)$.  We may also refer to a $k$-coloring simply as a coloring.  The \emph{chromatic number} of $G$, denoted by $\chi(G)$, is the smallest integer such that there is a $\chi(G)$-coloring of $G$.

A \emph{clique} in $G$ is a set of vertices of $G$ that are all pairwise adjacent.  A \emph{stable set} in $G$ is a set of vertices that are all pairwise non-adjacent.  A set $S\subseteq V(G)$ is an \textit{anti-matching} if every vertex in $S$ is non-adjacent to at most one vertex of $S$.  A \emph{brace} is a clique of size 2, a \emph{triangle} is a clique of size 3 and a \emph{triad} is a stable set of size 3.  The \emph{clique number} of $G$, denoted by $\omega(G)$, is the size of a maximum clique in $G$, and the \emph{stability number} of $G$, denoted by $\alpha(G)$ is the size of the maximum stable set in $G$.

Let $G$ be a graph such that \co.  We say that a brace $\{u,v\}$ is \textit{Tihany} if \cGuv.  More generally, if $K$ is a clique of size $k$ in $G$, then we say that $K$ is \textit{Tihany} if $\chi(G \setminus K) \geq \chi(G)-k+1$. 

Let $K$ be a clique in $G$. We denote by $C(K)$ the set of common neighbors of the members of $K$, by $A(K)$ the set of their common non-neighbors, and by $M(K)$ the set of vertices that are mixed on the clique $K$.  Formally, 
\begin{align*}
C(K) =&\{x\in V(G)\backslash K: ux\in E\makebox{ for all }u\in V(K)\}\\
A(K) =&\{x:ux\notin E\makebox{ for all }x \in K\}\\ 
 M(K) =&V(G) \setminus (C(K) \cup A(K)).
\end{align*}
 We say that a clique $K$ is \emph{dense} if $C(K)$ is a clique and we say that it is \emph{good} if $C(K)$ is an anti-matching.

The following theorem is the main result of this paper:

\begin{theorem}\label{main}
Let $G$ be a claw-free graph with \co.  Then there exists a clique $K$ with $|K|\leq 5$ such that $\chi(G \backslash K) > \chi(G) - |K|$.
\end{theorem}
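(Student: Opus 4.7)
The overall plan is to combine two strands of existing work: the Balogh--Kostochka--Prince--Stiebitz proof of the Erd\H{o}s--Lov\'asz Tihany conjecture for quasi-line graphs, which already supplies a Tihany brace when $G$ is quasi-line, and the structural understanding of claw-free graphs developed by Chudnovsky and Seymour, which localizes any non--quasi-line behaviour into a short catalogue of ``basic'' pieces. Throughout I would lean on the fundamental local fact that in any claw-free graph $\alpha(N(v))\leq 2$, and hence $\alpha(C(K))\leq 2$ for every clique $K$ containing a vertex. In particular, $C(K)$ is the complement of a triangle-free graph, so if $C(K)$ is an anti-matching it must in fact be a clique with at most a matching of missing edges --- a very rigid object. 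This is the source of the paper's \emph{good}/\emph{dense} dichotomy.

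Assume for contradiction that $G$ is a minimal counterexample, so every clique of size at most $5$ fails to be Tihany. I would set up an \emph{extension procedure} starting from a carefully chosen brace $K_2=\{u,v\}$, maintaining the invariant that the current clique $K_i$ is good or dense. If $K_i$ is not Tihany, then an optimal colouring of $G\setminus K_i$ uses exactly $\chi(G)-|K_i|$ colours and is extremely constrained on $K_i\cup C(K_i)$. Combining this rigidity with the anti-matching (good) or clique (dense) structure of $C(K_i)$, I would aim to either expose a Tihany subclique of size $\leq |K_i|$, or adjoin a vertex $w\in C(K_i)$ to obtain a larger clique $K_{i+1}=K_i\cup\{w\}$ that is itself good or dense. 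Iterating should either deliver a Tihany clique of size at most $5$ directly, or reduce the local structure around $K_i$ to a bounded list of configurations from which a Tihany $5$-clique can be exhibited by inspection.

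The step I expect to be hardest is the extension itself, and for two reasons. First, preserving goodness or denseness while increasing $|K|$ requires understanding the interaction between $C(K)$ and the mixed set $M(K)$, and in non--quasi-line claw-free graphs this interaction is genuinely rich (thickenings of antiprismatic graphs, of the icosahedron, and various strip-compositions must all be handled). Second, the colouring modifications that accompany each extension must be shown to exist inside each of these local structures; it is precisely the exceptional pieces where the Tihany conjecture cannot be settled at the brace level that force the bound in the theorem to be $5$ rather than $2$. Controlling these configurations --- and ensuring that the good/dense invariant survives each extension long enough for the bound $5$ to be reached --- is the technical heart of the proof.
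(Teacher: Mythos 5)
Your proposal has the right raw materials (the structure theorem, the $\alpha(C(K))\le 2$ observation, the dense/good dichotomy, the quasi-line result as a base case), but the central device you propose --- an iterative extension procedure that grows a brace into a clique of size at most $5$ while maintaining a good/dense invariant --- is not established and is not what the paper does. You explicitly flag the extension step as ``the technical heart,'' but that is precisely the part that needs a proof, and there is a concrete reason to doubt it exists in the form you describe: when $K_i$ is not Tihany, nothing forces any $w\in C(K_i)$ to yield a $K_{i+1}=K_i\cup\{w\}$ that is again good or dense. The set $C(K_{i+1})$ depends on $w$'s adjacency into $C(K_i)\cup M(K_i)$, and in thickenings of antiprismatic graphs this can destroy the anti-matching structure completely. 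The paper never attempts such an invariant-preserving extension.

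What the paper actually does is a case analysis keyed directly to the outcomes of the Chudnovsky--Seymour structure theorem, and the argument in each branch is of a different character, none of them an extension process: a weighted counting argument summing the ``not-Tihany'' inequality over all braces (icosahedron and mantled $L(K_{3,3})$); a dense brace read off the interval structure (long circular interval graphs); Gallai--Edmonds theory applied to $G^{c}$ together with a parity/matching count (non-$2$- and non-$3$-substantial graphs); augmenting anti-paths in a cobipartite neighbourhood (the ``heavy'' non-$2$-substantial case); a ``strongly Tihany'' notion compatible with worn hex-joins (three-cliqued graphs); and a clique-cutset reduction plus reduction to the other cases (nontrivial strip structures). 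The bound $5$ is not the result of iterating an extension four times from a brace; it comes from one specific configuration --- the rotator in a non-orientable prismatic graph --- where a dense $5$-clique is exhibited directly and Lemma \ref{W-1} is applied. So the gap is not cosmetic: the proposal replaces the paper's bespoke, case-specific arguments with a single uniform mechanism whose key step is unproven and, on the face of it, false in general.
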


To prove~\ref{main} we use a structure theorem for claw-free graphs (due to the first author and Seymour) that appears in~\cite{claws5} and is described in the next section.  Section 3 contains some lemmas that serve as ''tools'' for later proofs.  The next 6 sections are devoted to dealing with the different outcomes of the structure theorem, proving that a (subgraph) minimal counterexample to~\ref{main} does not fall into any of those classes.  Finally, in section~\ref{sec:main_proof} all of these results are collected to prove~\ref{main}.
\section{Structure Theorem}
The goal of this section is to state and describe the structure theorem for claw-free graphs appearing in \cite{claws5} (or, more precisely, its corollary).  We begin with some definitions which are modified from \cite{claws5}.

Let $X,Y$ be two subsets of $V(G)$ with $X \cap Y = \emptyset$.  We say that
$X$ and $Y$ are \emph{complete} to each other if every vertex of $X$ is
adjacent to every vertex of $Y$, and we say that they are \emph{anticomplete} 
to each other
if no vertex of $X$ is adjacent to a member of $Y$.  Similarly, if $A \subseteq
V(G)$ and $v \in V(G) \setminus A$, then $v$ is \emph{complete} to $A$ if $v$ is
adjacent to every vertex in $A$, and \emph{anticomplete} to $A$ if $v$ has no
neighbor in $A$. 

Let $F \subseteq V(G)^2$ be a set of unordered pairs of distinct vertices of $G$ such that every vertex appears in at most one pair.  Then $H$ is a \emph{thickening} of $(G,F)$ if for every $v \in V(G)$ there is a nonempty subset $X_v \subseteq V(H)$, all pairwise disjoint and with union $V(H)$ satisfying the following:
\begin{itemize}
\item for each $v \in V(G)$, $X_v$ is a clique of $H$
\item if $u,v \in V(G)$ are adjacent in $G$ and $\{u,v\} \not\in F$, then $X_u$ is complete to $X_v$ in $H$
\item if $u,v \in V(G)$ are nonadjacent in $G$ and $\{u,v\} \not\in F$, then $X_u$ is anticomplete to $X_v$ in $H$
\item if $\{u,v\} \in F$ then $X_u$ is neither complete nor anticomplete to $X_v$ in $H$.
\end{itemize}

\noindent Here are some classes of claw-free graphs that come up in the structure theorem.
\begin{itemize}
\item {\bf Graphs from the icosahedron.} The \emph{icosahedron} is the unique planar graph with twelve vertices all of degree five.  Let it have vertices $v_0,v_1,\ldots,v_{11}$, where for $1 \leq i \leq 10$, $v_i$ is adjacent to $v_{i+1},v_{i+2}$ (reading subscripts modulo 10), and $v_0$ is adjacent to $v_1,v_3,v_5,v_7,v_9$, and $v_{11}$ is adjacent to $v_2,v_4,v_6,v_8,v_{10}$.  Let this graph be $G_0$.  Let $G_1$ be obtained from $G_0$ by deleting $v_{11}$ and let $G_2$ be obtained from $G_1$ by deleting $v_{10}$.  Furthermore, let $F'=\{\{v_1,v_4\},\{v_6,v_9\}\}$ and let $F \subseteq F'$.  
    
    Let $G \in \mathcal{T}_1$ if $G$ is a thickening of $(G_0,\emptyset)$, $(G_1,\emptyset)$, or $(G_2,F)$ for some $F$.
    
\item {\bf Fuzzy long circular interval graphs.}
Let $\Sigma$ be a circle, and let $F_1,\ldots,F_k \subseteq \Sigma$ be homeomorphic to the interval $[0,1]$, such that no two of $F_1,\ldots,F_k$ share an 
endpoint, and no three of them have union $\Sigma$.  Now let $V \subseteq \Sigma$ be finite, and let $H$ be a graph with vertex set $V$ in which distinct $u,v \in V$ are adjacent precisely if $u,v \in F_i$ for some $i$. 

Let $F' \subseteq V(H)^2$ be the set of pairs $\{u,v\}$ such that $u,v$ are distinct endpoints of $F_i$ for some $i$.  Let $F \subseteq F'$ such that every vertex of $G$ appears in at most one member of $F$.  Then $G$ is a \emph{fuzzy long circular interval graph} if for some such $H$ and $F$, $G$ is a thickening of $(H,F)$.  

Let $G \in \mathcal{T}_2$ if $G$ is a fuzzy long circular interval graph. 

\item {\bf Fuzzy antiprismatic graphs.}
A graph $K$ is \emph{antiprismatic} if for every $X \subseteq V(K)$ with 
$|X|=4$, $X$ is not a claw and there are at least two pairs of vertices in $X$ that are adjacent.
Let $H$ be a graph and let $F \subseteq V(H)^2$ be a set of pairs $\{u,v\}$ 
such that every vertex of $H$ is in at most one member of $F$ and
\begin{itemize}
\item no triad of $H$ contains $u$ and no triad of $H$ contains $v$, or
\item there is a triad of $H$ containing both $u$ and $v$ and no other triad of $H$ contains $u$ or $v$.
\end{itemize}
Thus $F$ is the set of ``changeable edges'' discussed in \cite{claws1}.
The pair $(H,F)$ is {\em antiprismatic} if for every $F' \subseteq F$, 
the graph obtained from $H$ by changing  the adjacency of all the vertex pairs
in $F'$ is antiprismatic.
We say that a graph $G$ is a \emph{fuzzy antiprismatic graph} if $G$ is a 
thickening of $(H,F)$ for some antiprismatic pair $(H,F)$.

Let $G \in \mathcal{T}_3$ if $G$ is a fuzzy antiprismatic graph.

\end{itemize}

Next, we define what it means for a claw-free graph to admit a ``strip-structure''.

A \emph{hypergraph} $H$ consists of a finite set $V(H)$, a finite set $E(H)$, and an incidence relation between $V(H)$ and $E(H)$ (that is, a subset of $V(H) \times E(H)$).  For the statement of the structure theorem, we only need hypergraphs such that every member of $E(H)$ is incident with either one or two members of $V(H)$ (thus, these hypergraphs are graphs if we allow ``graphs'' to have loops and parallel edges).  For $F \in E(H)$, $\overline{F}$ denotes the set of all $h \in V(H)$ incident with $F$.

Let $G$ be a graph.  A \emph{strip-structure} $(H,\eta)$ of $G$ consists of a hypergraph $H$ with $E(H) \neq \emptyset$, and a function $\eta$ mapping each $F \in E(H)$ to a subset $\eta(F)$ of $V(G)$, and mapping each pair $(F,h)$ with $F \in E(H)$ and $h \in \overline{F}$ to a subset $\eta(F,h)$ of $\eta(F)$, satisfying the following conditions. \\ 

\noindent {\bf (SD1)} The sets $\eta(F)$ ($F \in E(H)$) are nonempty and pairwise disjoint and have union $V(G)$. \\ 

\noindent {\bf (SD2)} For each $h \in V(H)$, the union of the sets $\eta(F,h)$ for all $F \in E(H)$ with $h \in \overline{F}$ is a clique of $G$. \\ 

\noindent {\bf (SD3)} For all distinct $F_1,F_2 \in E(H)$, if $v_1 \in \eta(F_1)$ and $v_2 \in \eta(F_2)$ are adjacent in $G$, then there exists $h \in \overline{F_1} \cap \overline{F_2}$ such that $v_1 \in \eta(F_1,h)$ and $v_2 \in \eta(F_2,h)$. \\ 

There is also a fourth condition, but it is technical and we will not need it in this paper.

Let $(H,\eta)$ be a strip-structure of a graph $G$, and let $F \in E(H)$, where $\overline{F} = \{h_1,\ldots,h_k\}$.  Let $v_1,\ldots,v_k$ be new vertices, and let $J$ be the graph obtained from $G|\eta(F)$ by adding $v_1,\ldots,v_k$, where $v_i$ is complete to $\eta(F,h_i)$ and anticomplete to all other vertices of $J$.  Then $(J,\{v_1,\ldots,v_k\})$ is called the \emph{strip of} $(H,\eta)$ \emph{at} $F$.  A strip-structure $(H,\eta)$ is \emph{nontrivial} if $|E(H)| \geq 2$.
\\ \\
Appendix~\ref{appendix:strips} contains the descriptions of some strips $(J,Z)$ that we will need for the structure theorem.

We are now ready to state a structure theorem for claw-free graphs that 
is an easy corollary of the main result of \cite{claws5}.

\begin{theorem}\label{structure}
Let $G$ be a connected claw-free graph.  Then either
\begin{itemize}
\item $G$ is a member of $\mathcal{T}_1 \cup \mathcal{T}_2 \cup \mathcal{T}_3$, or
\item $V(G)$ is the union of three cliques, or
\item $G$ admits a nontrivial strip-structure such that for each strip $(J,Z)$, $1 \leq |Z| \leq 2$, and if $|Z|=2$, then either
\begin{itemize}
\item $|V(J)|=3$  and $Z$ is complete to $V(J) \setminus Z$, or
\item $(J,Z)$ is a member of $\mathcal{Z}_1  \cup \mathcal{Z}_2 \cup 
\mathcal{Z}_3 \cup \mathcal{Z}_4 \cup \mathcal{Z}_5$.
\end{itemize}
\end{itemize}
\end{theorem}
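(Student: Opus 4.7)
The plan is to derive Theorem~\ref{structure} directly from the main structure theorem of \cite{claws5} by a straightforward reorganization. The overall strategy is to invoke the Chudnovsky--Seymour decomposition of connected claw-free graphs, match its basic classes with $\mathcal{T}_1,\mathcal{T}_2,\mathcal{T}_3$, and then verify the strip-structure constraints claimed here.

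First I would recall the statement of the main theorem of \cite{claws5}: every connected claw-free graph either belongs to one of a handful of explicitly described basic classes (icosahedral thickenings, fuzzy long circular interval graphs, fuzzy antiprismatic graphs, and some small or degenerate classes), or admits a nontrivial strip-structure whose strips come from a restricted list. The three classes $\mathcal{T}_1,\mathcal{T}_2,\mathcal{T}_3$ defined above are set up in this paper to match three of those basic classes verbatim: the icosahedral thickenings coming from $G_0,G_1,G_2$ give $\mathcal{T}_1$, the fuzzy long circular interval graphs give $\mathcal{T}_2$, and the fuzzy antiprismatic graphs give $\mathcal{T}_3$.

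Next I would dispose of the remaining basic classes in \cite{claws5} that do not appear on our list. These classes are forced to be small or very constrained by their construction; in each such case, a direct check shows that $V(G)$ can be written as the union of three cliques, producing the second bullet of the conclusion. For the strip-structure outcome, I would verify two constraints. First, by the construction of strip-structures in \cite{claws5}, every hyperedge is incident with at most two vertices of the hypergraph, so $1 \leq |Z| \leq 2$ holds automatically. Second, when $|Z|=2$, the strip $(J,Z)$ is one of a list of strip types isolated in \cite{claws5}; a direct inspection matches each of these either with the trivial case $|V(J)|=3$ and $Z$ complete to $V(J)\setminus Z$, or with one of $\mathcal{Z}_1,\ldots,\mathcal{Z}_5$ as defined in Appendix~\ref{appendix:strips}.

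The main obstacle is bookkeeping rather than mathematics: one must check, class by class, that every strip type admitted by the theorem of \cite{claws5} falls into the advertised list, and that the degenerate basic classes there are indeed covered by a union of three cliques. There is no genuinely new content---Theorem~\ref{structure} is a repackaging of an existing result into the form most convenient for the case analysis carried out in Sections~4 through~9 of this paper.
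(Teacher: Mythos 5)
The paper offers no proof of Theorem~\ref{structure} at all: it states explicitly that the theorem ``is an easy corollary of the main result of \cite{claws5}'' and moves on. Your proposal is precisely the routine derivation that such a citation presupposes---matching $\mathcal{T}_1,\mathcal{T}_2,\mathcal{T}_3$ to the basic classes, folding the remaining degenerate basic classes into the three-cliques outcome, and checking the strip list against $\mathcal{Z}_1,\ldots,\mathcal{Z}_5$---so it aligns with the paper's implicit approach.
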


\section{Tools}
In this section we present a few lemmas that will then be used extensively in the following sections to prove results on the different graphs used in~\ref{structure}.

The following result is taken from \cite{Stiebitz}.  Because it is fundamental to many of our results, we include its proof here for completeness. 

\begin{lemma}\label{basic}
Let $G$ be a graph with chromatic number $\chi$ and let $K$ be a clique of size $k$ in $G$.  If $K$ is not Tihany, then every color class of a $(\chi-k)$-coloring of $G \setminus K$ contains a vertex complete to $K$.
\end{lemma}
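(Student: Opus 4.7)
The plan is a contradiction argument. Write $K = \{u_1, \ldots, u_k\}$. Since $K$ is not Tihany, by definition $\chi(G \setminus K) \le \chi - k$, so fix a $(\chi-k)$-coloring of $G \setminus K$ and suppose, for contradiction, that some color class $C$ of this coloring contains no vertex complete to $K$. From this hypothesis I will produce a proper $(\chi-1)$-coloring of $G$, contradicting the definition of $\chi$.

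The key idea is to split the class $C$ into $k$ pieces and absorb one vertex of $K$ into each. Since no vertex of $C$ is complete to $K$, for each $v \in C$ one can choose a vertex $\phi(v) \in K$ that is non-adjacent to $v$. Partition $C$ as $C = C_1 \cup \cdots \cup C_k$, where $C_i = \phi^{-1}(u_i)$ (some $C_i$ may be empty). Each set $C_i \cup \{u_i\}$ is stable: $C_i \subseteq C$ is stable in $G \setminus K$, and by the choice of $\phi$ the vertex $u_i$ is non-adjacent to every vertex of $C_i$.

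Then I would modify the given coloring as follows: every vertex of $G \setminus K$ outside $C$ keeps its original color; the set $C_1 \cup \{u_1\}$ is assigned the original color of $C$; and for $2 \le i \le k$, the set $C_i \cup \{u_i\}$ receives a fresh color $c_i$. A short verification shows the result is a proper coloring of $G$: each new class is stable, the $u_i$ receive pairwise distinct colors so the clique $K$ is properly colored, and vertices outside $K \cup C$ are untouched so no new conflict can be introduced there. The total palette has size $(\chi - k - 1) + 1 + (k-1) = \chi - 1$, which is the required contradiction.

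There is no serious obstacle here, as the statement is a standard extension-of-coloring trick; the only thing that requires care is the bookkeeping of colors and the verification that the recolored pieces $C_i \cup \{u_i\}$ remain independent and do not conflict with the preserved portion of the coloring.
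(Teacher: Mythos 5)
Your proof is correct and takes essentially the same approach as the paper: in both, a vertex of $K$ is paired with the vertices of $C$ non-adjacent to it, yielding a proper $k$-coloring of $G|(K\cup C)$ to combine with a $(\chi-k-1)$-coloring of the rest and produce a $(\chi-1)$-coloring of $G$. The only (cosmetic) difference is that you reuse the old color of $C$ for one of the new classes rather than introducing $k$ fresh colors.
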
  

\begin{proof}
Suppose not. Since $K$ is not Tihany, it follows that $G \setminus K$ is $\chi-k$-colorable.  Let $C$ be a color class of a ($\chi-k$)-coloring of $G\setminus K$ with no vertex complete to $K$.  Define a coloring of $K \cup C$ by giving a distinct color to each vertex of $K$ and giving each vertex of $C$ a color of one of its non-neighbors in $K$.  This defines a $k$-coloring of $G|(K \cup C)$.  Note also that $G \setminus (K\cup C)$ is $\chi-k-1$-colorable.  However, this implies that $G$ is ($\chi-1$)-colorable, a contradiction.  This proves \ref{basic}.
\end{proof}

The next lemma is one of our most important and basic tool.  

\begin{lemma}\label{W-1}
Let $G$ be a graph such that \co. Let $K$ be a clique of $G$. If $K$ is dense, then it is Tihany.
\end{lemma}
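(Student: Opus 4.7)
The plan is to argue by contradiction, combining Lemma~\ref{basic} with the fact that $K \cup C(K)$ is forced to be a large clique when $K$ is dense.

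First I would assume that $K$ is not Tihany, so that $\chi(G\setminus K) \leq \chi(G) - k$ where $k = |K|$. Fix a $(\chi(G)-k)$-coloring of $G \setminus K$. By Lemma~\ref{basic}, every one of its $\chi(G)-k$ color classes contains a vertex complete to $K$, i.e., a vertex of $C(K)$.

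Next I would exploit the density hypothesis: since $C(K)$ is a clique and each color class is a stable set, each color class contains \emph{at most} one vertex of $C(K)$. Combined with the previous step, this produces $\chi(G) - k$ distinct vertices of $C(K)$, so $|C(K)| \geq \chi(G) - k$.

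Finally I would assemble the contradiction. Because every vertex of $C(K)$ is complete to $K$, the set $K \cup C(K)$ is a clique of $G$, and its size is at least $k + (\chi(G) - k) = \chi(G)$. Hence $\omega(G) \geq \chi(G)$, contradicting the standing hypothesis \co. There is no real obstacle here: the argument is a direct two-line consequence of Lemma~\ref{basic} once the observation is made that a dense clique's common-neighborhood forces color classes to be distinct inside $C(K)$.
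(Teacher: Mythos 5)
Your proof is correct and follows essentially the same route as the paper's: assume $K$ is not Tihany, apply Lemma~\ref{basic} to a $(\chi(G)-k)$-coloring of $G\setminus K$, and use density of $K$ to conclude that $K\cup C(K)$ is a clique of size at least $\chi(G)$, contradicting \co. You merely spell out the intermediate observation (each stable color class meets the clique $C(K)$ in at most one vertex) that the paper leaves implicit.
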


\begin{proof}
Suppose that $K$ is not Tihany.  Let $\mathcal{C}$ be a $\chi-k$-coloring of $G\setminus K$.  By \ref{basic}, every color class of $\mathcal{C}$ contains a vertex complete to $K$.  Hence, every color class contains a member of $C(K)$ and so $|C(K) \cup K| \geq \chi(G)>\omega(G)$, a contradiction. This proves~\ref{W-1}.
\end{proof}

Let $(A,B)$ be disjoint subsets of $V(G)$.  The pair $(A,B)$ is called a \emph{homogeneous pair} in $G$ if $A,B$ are cliques, and for every vertex $v \in V(G) \setminus (A \cup B)$, $v$ is either $A$-complete or $A$-anticomplete and either $B$-complete or $B$-anticomplete.  A $W$-\emph{join} $(A,B)$ is a homogeneous pair in which $A$ is neither complete nor anticomplete to $B$.  We say that a $W$-join $(A,B)$ is \emph{reduced} if we can partition $A$ into two sets $A_1$ and $A_2$ and we can partition $B$ into $B_1,B_2$ such that $A_1$ is complete to $B_1$, $A_2$ is anticomplete to $B$, and $B_2$ is anticomplete to $A$.  Note that since $A$ is neither complete nor anticomplete to $B$, it follows that both $A_1$ and $B_1$ are non-empty and at least one of $A_2,B_2$ is non-empty.  We call a $W$-join that is not reduced a \emph{non-reduced} $W$-join.

Let $H$ be a thickening of $(G,F)$ and let $\{u,v\} \in F$.  Then we notice that $(X_u,X_v)$ is a $W$-join in $H$.  If for every $\{u,v\} \in F$ we have that $(X_u,X_v)$ is a reduced $W$-join then we say that $H$ is a \emph{reduced thickening} of $G$.  

The following result appears in \cite{twothirdschi}.

\begin{lemma}\label{homo}
Let $G$ be a claw-free graph and suppose that $G$ admits a non-reduced $W$-join.
Then there exists a subgraph $H$ of $G$  with the following properties:
\begin{enumerate}
\item $H$ is a claw-free graph,  $|V(H)|=|V(G)|$ and $|E(H)| < |E(G)|$.
\item $\chi(H) = \chi(G)$.
\end{enumerate}
\end{lemma}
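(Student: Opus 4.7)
The plan is to identify a single edge $e$ of $G$ between $A$ and $B$ whose deletion yields the desired subgraph $H = G \setminus \{e\}$, and then verify claw-freeness and $\chi$-preservation. Since $(A,B)$ is non-reduced, the canonical partition $A = A_1 \cup A_2$, $B = B_1 \cup B_2$ — where $A_1$ is the set of $A$-vertices with a neighbor in $B$ and $A_2 = A \setminus A_1$, and similarly for $B$ — fails the reducedness condition, so there must exist $a \in A_1$ and $b \in B_1$ with $ab \notin E(G)$. Pick a neighbor $b' \in B$ of $a$ and a neighbor $a' \in A$ of $b$; since $ab \notin E$ we have $a \ne a'$ and $b \ne b'$. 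I set $H = G \setminus \{ab'\}$, which already satisfies condition (1) of the lemma.

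For claw-freeness of $H$, any claw in $H$ not present in $G$ must have $ab'$ as a non-edge between two of its leaves, so has some center $x$ and leaves $a, b', w$ with $x$ adjacent to $a, b', w$ in $G$ and $aw, b'w \notin E(G)$. The homogeneity of $(A,B)$ forces every outside vertex adjacent to $a$ to be complete to $A$, and every outside vertex adjacent to $b'$ to be complete to $B$; a short case check on the placement of $x$ and $w$ in the six classes $A, B, S_A, S_B, S_{AB}, S_0$ forces $x \in S_{AB}$ and $w \in S_0$. But then $\{a, b, w\}$ is already a stable set in $N_G(x)$ (using $ab \notin E$ and the anticompleteness of $w$ to $A \cup B$), so $G$ already contains a claw, contradicting the hypothesis.

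The chromatic-number direction is the main technical content. The inequality $\chi(H) \le \chi(G)$ is immediate. For the reverse, I would suppose for contradiction a proper coloring $c$ of $H$ using $k := \chi(G) - 1$ colors. Since $c$ cannot be proper on $G$, we must have $c(a) = c(b')$; write $\alpha := c(a) = c(b')$, $\beta := c(b)$, and $\gamma := c(a')$, which are pairwise distinct (using $bb', aa', a'b \in E$). Using the homogeneity of $(A,B)$, the color class $C_\alpha$ lies inside $\{a, b'\} \cup S_0$, because any vertex of $S_A \cup S_B \cup S_{AB}$ is adjacent to $a$ or to $b'$ and hence cannot carry color $\alpha$. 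I would then perform a Kempe swap on the $(\alpha,\beta)$-bichromatic component $D$ of $H$ containing $b'$: if $a \notin D$, the swap yields a proper $k$-coloring of $G$ directly, producing the desired contradiction. The main obstacle is the sub-case where $a$ and $b'$ lie in the same $(\alpha, \beta)$-component; here the vertex $a'$ and the color $\gamma$ enable an auxiliary Kempe swap on a different color pair — either on $(\alpha, \gamma)$ using $aa' \in E$, or on $(\beta, \gamma)$ using $a'b \in E$ — whose component separates $a$ from $b'$ and again produces the desired coloring of $G$. Carefully verifying that in every sub-case some swap succeeds, using the strong restriction on $C_\alpha$ and the homogeneity of $(A,B)$ to keep cascading Kempe changes within controlled sets, is the most delicate step of the proof.
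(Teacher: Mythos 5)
Your observation that the W-join is non-reduced if and only if the canonical partition fails is correct, and your claw-freeness argument for $H = G \setminus \{ab'\}$ is sound. However, the chromatic-number step has a genuine gap, and in fact the specific edge you choose to delete is sometimes the wrong one: deleting $ab'$ can strictly decrease the chromatic number, so no amount of Kempe chaining on a $(\chi(G)-1)$-coloring of $H$ can recover a coloring of $G$.

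Here is a concrete counterexample. Let $A = \{a_1,a_2,a_3\}$ and $B = \{b_1,b_2\}$ be cliques with $V(G) = A \cup B$ (so the homogeneous-pair condition is vacuous), and let the $A$\d$B$ edges be $a_1b_2,\ a_2b_1,\ a_2b_2,\ a_3b_2$; the non-edges are $a_1b_1$ and $a_3b_1$. One checks directly that $G$ is claw-free and that $(A,B)$ is a non-reduced $W$-join (the canonical $A_1 = A$ is not complete to the canonical $B_1 = B$ because $a_1b_1\notin E$). Here $\{a_1,a_2,a_3,b_2\}$ is a $K_4$, and $b_1$ has only two neighbours in it, so $\omega(G)=\chi(G)=4$. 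Now run your construction with $a=a_1$, $b=b_1$; the unique $B$-neighbour of $a_1$ is $b'=b_2$, so $H = G\setminus\{a_1b_2\}$. In $H$ the largest clique has size $3$, and the coloring $a_1\mapsto 1,\ a_2\mapsto 2,\ a_3\mapsto 3,\ b_1\mapsto 3,\ b_2\mapsto 1$ is proper, so $\chi(H)=3<\chi(G)$. (By symmetry the other admissible choice $(a,b)=(a_3,b_1)$ fails the same way.) Note that the \emph{other} edge, $a_2b_1$, is the right one to delete in this example: it yields a reduced $W$-join with $\chi$ preserved, but that is the edge $a'b$, not $ab'$.

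The underlying issue is that what governs $\chi$ here is the maximum matching $m$ in the bipartite complement of the $W$-join. Deleting a $G$-edge between $A$ and $B$ adds that edge to the bipartite complement; if both of its endpoints are exposed by some maximum matching (equivalently, neither lies in some minimum vertex cover, by K\"onig), then $m$ increases and $\chi$ can drop, exactly as in the example. Your choice of $ab'$ makes no attempt to avoid this, and indeed can be forced to land on such an edge. A correct argument must first choose the edge to delete with reference to a maximum matching or minimum vertex cover of the bipartite complement, and then prove $\chi(H)\ge\chi(G)$ by rerouting the matching of color-class pairs across $A\cup B$ rather than by a Kempe-chain argument (which you already flag as incomplete, and which, as the example shows, cannot succeed for the edge you deleted).
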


The result of~\ref{homo} implies the following:
\begin{corollary}\label{reduced}
Let $G$ be a claw-free graph with \co~that is a minimal counterexample to~\ref{main}.
Assume also that $G$ is a thickening of  $(H,F)$ for some claw-free graph $H$ and $F \subseteq V(H)^2$. Then $G$ is a reduced 
thickening of $(H,F)$.
\end{corollary}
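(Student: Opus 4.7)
The plan is to argue by minimality. Suppose for contradiction that $G$ is not a reduced thickening of $(H,F)$; then by definition there exists $\{u,v\} \in F$ such that the pair $(X_u, X_v)$ is a non-reduced $W$-join of $G$. I would first verify that $(X_u, X_v)$ really is a $W$-join in $G$: by the thickening axioms, $X_u$ and $X_v$ are cliques and $X_u$ is neither complete nor anticomplete to $X_v$. Since $F$ is a matching on $V(H)$, for every $z \in V(H) \setminus \{u,v\}$ we have $\{z,u\},\{z,v\} \notin F$, so $X_z$ is complete or anticomplete to each of $X_u$ and $X_v$. Hence every vertex outside $X_u \cup X_v$ is either complete or anticomplete to each side, and $(X_u,X_v)$ is a homogeneous pair, i.e.\ a $W$-join. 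Since it is non-reduced, Lemma~\ref{homo} applies.

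The lemma yields a claw-free subgraph $G'$ of $G$ with $V(G')=V(G)$, $|E(G')|<|E(G)|$ and $\chi(G')=\chi(G)$. The central step is to argue that $G'$ is itself a counterexample to Theorem~\ref{main}, which will contradict the subgraph-minimality of $G$. Since $G'$ is a spanning subgraph of $G$, we have $\omega(G') \leq \omega(G) < \chi(G) = \chi(G')$, so $G'$ satisfies the hypothesis $\chi > \omega$. Now suppose for a further contradiction that some clique $K$ of $G'$ with $|K|\leq 5$ satisfies $\chi(G'\setminus K) > \chi(G')-|K|$. Since $E(G') \subseteq E(G)$, the set $K$ is a clique of $G$ as well, and $G'\setminus K$ is a spanning subgraph of $G\setminus K$, hence
\[
\chi(G\setminus K) \;\geq\; \chi(G'\setminus K) \;>\; \chi(G')-|K| \;=\; \chi(G)-|K|.
\]
This contradicts the assumption that $G$ is a counterexample to Theorem~\ref{main}. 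Therefore no such clique exists in $G'$, so $G'$ is a counterexample; but $G'$ is a proper subgraph of $G$, contradicting minimality.

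The argument is essentially a direct reduction via Lemma~\ref{homo} combined with the monotonicity of the chromatic number and of the clique structure under taking subgraphs on the same vertex set. I do not anticipate a serious obstacle: the only two small points requiring care are (i) the matching property of $F$, which ensures that $(X_u, X_v)$ is genuinely a homogeneous pair and thus a $W$-join in $G$, and (ii) the transfer of the "counterexample" property from $G$ to $G'$, which relies solely on $E(G') \subseteq E(G)$ together with $\chi(G')=\chi(G)$, so that every small Tihany-type clique of $G'$ would already witness the conclusion of Theorem~\ref{main} for $G$.
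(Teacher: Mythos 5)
Your argument is correct and is exactly the reduction the paper has in mind (the paper gives no explicit proof, simply asserting that \ref{homo} implies \ref{reduced}). You correctly verify that each $(X_u,X_v)$ with $\{u,v\}\in F$ is a $W$-join using the matching property of $F$, invoke Lemma~\ref{homo} on a non-reduced one to get a spanning subgraph $G'$ with fewer edges and the same chromatic number, and then observe that cliques of $G'$ are cliques of $G$ and $\chi(G\setminus K)\geq\chi(G'\setminus K)$, so $G'$ would be a strictly smaller counterexample, contradicting subgraph-minimality.
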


For a clique $K$ and $F\subseteq V(G)^2$, we define $S_F(K)=\{x:\exists k\in K \makebox{ s.t. } \{x,k\}\in F \makebox{ and } x\in C(K\backslash k)\}.$

\begin{lemma}\label{dense_thickening}
Let $G$ be a reduced thickening of $(H,F)$ for some claw-free graph $H$ and  $F \subseteq V(H)^2$.  
Let $K$ be a clique in $H$ such that for all $x,y\in C(K)$, $\{x,y\}\not\in F$. If $C(K)\cup S_F(K)$ is a clique, then there exists a dense clique of size $|K|$ in $G$.
\end{lemma}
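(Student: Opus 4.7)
The plan is to construct a dense clique $K'$ of size $|K|$ in $G$ by picking one representative $x_k \in X_k$ for each $k \in K$, with the selection done carefully so that both $K'$ and $C_G(K')$ are cliques of $G$. Since every vertex of $H$ lies in at most one pair of $F$, the choices below do not conflict. If $k \in K$ has no $F$-partner in $K \cup C(K) \cup S_F(K)$, take any $x_k \in X_k$. For an internal $F$-pair $\{k,k'\} \in F$ with $k, k' \in K$, use the reduced $W$-join on $(X_k, X_{k'})$, writing $X_k = P_1 \cup P_2$, $X_{k'} = Q_1 \cup Q_2$ with $P_1$ complete to $Q_1$, and pick $x_k \in P_1$, $x_{k'} \in Q_1$. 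For an external $F$-pair $\{k, w\} \in F$ with $w \in C(K) \cup S_F(K)$, use the reduced $W$-join $X_w = A_1 \cup A_2$, $X_k = B_1 \cup B_2$ (with $A_1$ complete to $B_1$, $A_2$ anticomplete to $X_k$, $B_2$ anticomplete to $X_w$), and pick $x_k \in B_1$ if $B_2 = \emptyset$ and $x_k \in B_2$ otherwise.

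Next I would locate $C_G(K')$. For $\ell \in V(H) \setminus (K \cup C(K) \cup S_F(K))$, a short case analysis using the definition of $S_F(K)$ and the ``at most one $F$-pair per vertex'' condition produces some $k_j \in K$ with $\ell$ non-adjacent to $k_j$ in $H$ and $\{\ell, k_j\} \notin F$; hence $X_\ell$ is anticomplete to $X_{k_j}$ and $X_\ell \cap C_G(K') = \emptyset$. For each $w \in K \cup C(K) \cup S_F(K)$, a direct computation gives $X_w \cap C_G(K')$: all of $X_w$ when $w \in C(K) \cup S_F(K)$ has no $F$-interaction with $K$; $A_1$ in the external-pair case with $B_2 = \emptyset$; $\emptyset$ in the external-pair case with $B_2 \neq \emptyset$ (because $x_k \in B_2$ is anticomplete to $X_w$); and $X_k \setminus \{x_k\}$ or $P_1 \setminus \{x_k\}$ (depending on whether $k$ is in an internal $F$-pair) for $w = k \in K$.

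The final step is to verify pairwise adjacency inside $C_G(K')$. By hypothesis, $C(K) \cup S_F(K)$ is a clique in $H$, and it contains no $F$-pair: the assumption forbids $F$-pairs inside $C(K)$, any $S_F(K)$-vertex has its unique $F$-partner in $K$, and a cross $F$-pair between $C(K) \setminus S_F(K)$ and $S_F(K)$ is similarly impossible. Thus $X_w$ is complete to $X_{w'}$ in $G$ for distinct $w, w' \in C(K) \cup S_F(K)$. Adjacency between ``$K$-side'' contributions and ``outer'' contributions, and among $K$-side contributions, then follows from the $W$-join data: when $B_2 = \emptyset$ the surviving $A_1$ is complete to $X_k = B_1$; when $B_2 \neq \emptyset$ the outer side contributes nothing; and for internal $F$-pairs both $K$-sides reduce to the mutually-complete $P_1, Q_1$.

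The main obstacle is the external $F$-pair case with $B_2 \neq \emptyset$: a naive choice $x_k \in B_1$ would put $A_1$ and $B_2$ both inside $C_G(K')$, and $A_1$ is anticomplete to $B_2$, breaking the clique property. Moving $x_k$ into $B_2$ kills $X_w$'s contribution entirely (every vertex of $X_w$ fails adjacency to $x_k$), which is exactly what keeps $C_G(K')$ a clique. Since $F$ is a matching on the vertices involved, these choices can be made independently for each $F$-pair.
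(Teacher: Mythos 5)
Your proof is correct and follows the same high-level strategy as the paper: construct a transversal clique $K'$ meeting each $X_v$ for $v\in K$ and argue that it is dense. But you fill in a selection step that the paper's one-sentence argument glosses over. Read literally, the paper asserts that \emph{any} such transversal $K'$ is dense once $C(K)\cup S_F(K)$ is a clique, which is not quite right: in the external-pair case $\{k,w\}\in F$ with $w\in C(K)\cup S_F(K)$, writing $X_w=A_1\cup A_2$ and $X_k=B_1\cup B_2$ as you do, taking $x_k\in B_1$ while $B_2\neq\emptyset$ leaves both $B_2$ and $A_1$ inside $C_G(K')$ with no edges between them, so that particular $K'$ fails to be dense. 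Your remedy — take $x_k\in B_2$ when $B_2\neq\emptyset$ so that $X_w$ contributes nothing to $C_G(K')$, and otherwise note that $B_2=\emptyset$ forces $A_2\neq\emptyset$, which drops out of $C_G(K')$ harmlessly — is exactly the missing content, as is your check that $C(K)\cup S_F(K)$ carries no $F$-pair (so the corresponding classes stay mutually complete) and that every $\ell\notin K\cup C(K)\cup S_F(K)$ has some $k_j\in K$ with $\ell k_j\notin E(H)$ and $\{\ell,k_j\}\notin F$, eliminating $X_\ell$ from $C_G(K')$. The internal-pair case (take $x_k\in P_1$, $x_{k'}\in Q_1$) is sound for the same reason. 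In short, same approach, but yours is a complete argument where the paper supplies only a sketch.
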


\begin{proof}
Let $K'$ be a clique of size $|K|$ in $G$ such that $K'\cap X_v\neq \emptyset$ for all $v\in K$. By the definition of a thickening such a clique exists. Moreover since $C(K)\cup S_F(K)$ is a clique, it follows that $K'$ is dense. This proves~\ref{dense_thickening}.
\end{proof}
The following lemma is a direct corollary of~\ref{W-1} and~\ref{dense_thickening}.
\begin{lemma}\label{dense_thickening2}
Let $G$ be a reduced thickening of $(H,F)$ for some claw-free graph $H$ and  $F \subseteq V(H)^2$. Let $K$ be a dense clique in $H$ such that for all $x,y \in C(K)$, $\{x,y\} \not\in F$. If $C(K)\cup S_F(K)$ is a clique, then there exists a Tihany clique of size $|K|$ in $G$.
\end{lemma}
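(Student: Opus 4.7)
The plan is to chain Lemma \ref{dense_thickening} and Lemma \ref{W-1}, since the hypotheses of the statement are tailored exactly for this two-step composition. Nothing more is required; the lemma is in effect a bookkeeping corollary that combines a structural transfer from $H$ to $G$ with the known consequence of denseness in $G$.

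First, I would apply Lemma \ref{dense_thickening} to the clique $K$ in $H$. All of its required hypotheses are directly assumed: $G$ is a reduced thickening of $(H,F)$, no pair of vertices in $C(K)$ belongs to $F$, and $C(K)\cup S_F(K)$ is a clique. (Observe that \ref{dense_thickening} does not even need $K$ itself to be dense in $H$, so the ``dense'' hypothesis on $K$ in the present statement is only used implicitly, through the assumption that $C(K)\cup S_F(K)$ is a clique.) The conclusion gives a clique $K'\subseteq V(G)$ with $|K'|=|K|$ such that $C(K')$ is a clique of $G$; that is, $K'$ is dense in $G$.

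Second, I would invoke Lemma \ref{W-1} inside $G$. Since we are operating under the standing assumption \co\ and $K'$ is a dense clique of $G$, Lemma \ref{W-1} immediately yields that $K'$ is Tihany, i.e.\ $\chi(G\setminus K')\geq \chi(G)-|K'|+1$. Combined with $|K'|=|K|$, this exhibits the desired Tihany clique of size $|K|$ in $G$.

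There is essentially no obstacle: the whole proof is a two-line composition. The one subtle point worth flagging is that Lemma \ref{W-1} must be applied to $K'$ in $G$ (where ``dense'' is evaluated in $G$), not to $K$ in $H$; the role of Lemma \ref{dense_thickening} is precisely to move the denseness from the quotient $H$ up to the thickening $G$. This is also the feature that makes \ref{dense_thickening2} useful in later sections, where structural information is naturally obtained in the reduced graph $H$ but a Tihany clique is needed in $G$.
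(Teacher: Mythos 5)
Your proof is correct and coincides with the paper's own justification, which simply states that \ref{dense_thickening2} is a direct corollary of \ref{W-1} and \ref{dense_thickening}; you chain them in exactly the intended order. Your side observation that the ``dense'' hypothesis on $K$ in $H$ is redundant (being subsumed by the assumption that $C(K)\cup S_F(K)$ is a clique) is also accurate.
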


The following result helps us handle the case when $C(x)$ is an antimatching for some vertex $x \in V(G)$.

\begin{lemma}\label{lemma:vertexantimatching}
Let $G$ be a graph with \co. Let $u,x,y \in V(G)$ such that $ux,uy \in E(G)$ and $xy \not\in E(G)$.  Let $E=\{u,x\}$ and $E'=\{u,y\}$.  If $C(E)=C(E')$ then $E,E'$ are Tihany.
\end{lemma}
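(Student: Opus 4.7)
The plan is to prove both braces are Tihany by contradiction; the hypothesis $C(E)=C(E')$ is symmetric in $x$ and $y$, so the arguments for $E$ and $E'$ are mirror images of each other. I will focus on $E=\{u,x\}$: assume $E$ is not Tihany and derive a contradiction via \ref{basic}, the conclusion being that no color can consistently be assigned to $y$ under a $(\chi(G)-2)$-coloring of $G\setminus E$.

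Suppose $\chi(G\setminus E)\leq\chi(G)-2$ and let $c$ be such a coloring. First, observe that $y\in V(G)\setminus E$, since $y\neq u$ (because $uy\in E(G)$) and $y\neq x$ (because $xy\notin E(G)$); consequently $c(y)$ is defined and lies in the color set $\{1,\ldots,\chi(G)-2\}$. By \ref{basic} applied to the clique $E$, every color class of $c$ contains a vertex complete to $E$, i.e.\ a vertex in $C(E)$; pick one such representative $v_i$ in color class $i$ for each $i\in\{1,\ldots,\chi(G)-2\}$. Invoking the hypothesis $C(E)=C(E')$, each $v_i$ also belongs to $C(E')$, and is therefore adjacent to $y$ and distinct from $y$. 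Hence $c(y)\neq c(v_i)=i$ for every $i$, which is absurd since $c(y)$ itself is one of those $\chi(G)-2$ colors. Thus $E$ is Tihany, and swapping the roles of $x$ and $y$ throughout (using that each $v_i\in C(E')$ lies in $C(E)$ and so is adjacent to $x$) proves the same for $E'$.

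The only place where care is needed is the verification that $y\in V(G)\setminus E$ (symmetrically $x\in V(G)\setminus E'$), so that $c(y)$ is well-defined; this is immediate from the adjacency/nonadjacency data. Beyond that, I do not anticipate any obstacle: \ref{basic} packages the real work, and the equality $C(E)=C(E')$ is precisely what converts ``a vertex complete to $E$ in each color class'' into ``a neighbor of $y$ in each color class'', at which point the pigeonhole-style count finishes the argument.
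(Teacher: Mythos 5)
Your proof is correct and takes essentially the same approach as the paper's: the paper also assumes $E$ is not Tihany, applies Lemma~\ref{basic} (to the single color class containing $y$ rather than to all classes, but the content is identical), and uses $C(E)=C(E')$ to find a neighbor of $y$ in $y$'s own color class, a contradiction. Your remark about $y\notin E$ is a small point the paper leaves implicit.
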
  

\begin{proof}
Suppose that $E$ is not Tihany.  Let $\mathcal{C}$ be a $(\chi(G)-2)$-coloring of $G\setminus\{u,x\}$.  Let $C\in \mathcal{C}$ be the color class such that $y\in C$.  By Lemma~\ref{basic}, there is a vertex $z\in C$ such that $z$ is complete to $E$, and so $z \in C(E)$.  But $y$ is complete to $C(E)$, a contradiction.  Hence $E$ is be Tihany and by symmetry, so is $E'$.
\end{proof}

In particular, if we have a vertex $x$ such that $C(x)$ is an antimatching, we can find a Tihany edge either by~\ref{W-1} or by~\ref{lemma:vertexantimatching}. 

\begin{lemma}\label{lemma:goodclique}
Let $H$ be a graph, $G$ a thickening of $(G,F)$ for some valid
$F\subseteq G(V)^2$ such that \co. Let $K$ be a clique of $H$. Assume
that for all $\{x,y\}\in F$ such that $x\in K$, $y$ is complete to
$C(K)\backslash \{y\}$. Let $u,v\in C(K)$ such that $u$ is not
adjacent to $v$ and $\{u,v\}$ is complete to
$C(K)\backslash \{u,v\}$. Moreover assume that if there exists $E\in
F$ with $\{u,v\}\cap E\neq \emptyset$, then $E=\{u,v\}$. Then there
exists a Tihany clique of size $|K|+1$ in $G$.

\begin{proof}
Assume not. Let $K'$ be a clique of size $K$ in $G$ such that $K'\cap
X_y\neq \emptyset$ for all $y\in K$. If $\{u,v\}\notin F$, let $a\in
X_u$, $A=X_u$, $b\in X_v$ and $B=X_v$. If $\{u,v\}\in F$, let $X_u^1$,
$X_u^2$, $X_v^1$ and $X_v^2$ be as in the definition of reduced
W-join. By symmetry, we may assume that $X_u^2$ is not empty. If
$X_v^2$ is empty, let $a\in X_u^2$, $A=X_u^2$, $b\in X_v^1$ and
$B=X_v^1$; and if $X_v^2$ is not empty, let $a\in X_u^2$, $A=X_u$,
$b\in X_v^2$ and $B=X_v$.

Now let $T_a=K'\cup \{a\}$ and $T_b=K'\cup \{b\}$. We may assume that
$\chi(G\backslash T_a)=\chi(G\backslash
T_b)=\chi(G)-|K|-1$. By~\ref{basic}, we may assume that every color
class $G\backslash T_a$ contains a common neighbor of $T_a$. Since no
vertex of $B$ is complete to $T_1$, and since $B$ is a clique complete
to $C(T_1)\backslash A$, it follows that $|A| > |B|$. But similarly,
$|B|>|A|$, a contradiction. This proves~\ref{lemma:goodclique}.
\end{proof}
\end{lemma}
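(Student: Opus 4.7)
The plan is to extend a representative clique of $K$ in $G$ by one vertex from either $X_u$ or $X_v$ and show at least one of the two possible extensions is Tihany of size $|K|+1$. Concretely, choose a clique $K'\subseteq V(G)$ with $|K'|=|K|$ meeting each $X_w$ for $w\in K$; this exists because the $X_w$ ($w\in K$) are cliques, pairwise complete in $G$. The hypothesis that the only $F$-edge meeting $\{u,v\}$ is $\{u,v\}$ itself forces $\{u,w\},\{v,w\}\notin F$ for every $w\in K$ (since $u,v\notin K$), so $X_u$ and $X_v$ are each complete to $K'$. Hence $T_a:=K'\cup\{a\}$ and $T_b:=K'\cup\{b\}$ are cliques of size $|K|+1$ for any choice of $a\in X_u$ and $b\in X_v$.

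Now pick $a$, $b$ together with witness cliques $A\subseteq X_u$ and $B\subseteq X_v$ with $a\in A$, $b\in B$, and $A$ anticomplete to $B$. When $\{u,v\}\notin F$, take $A=X_u$, $B=X_v$, which are anticomplete since $uv\notin E(H)$. When $\{u,v\}\in F$, appeal to the reduced W-join partitions $X_u=X_u^1\cup X_u^2$, $X_v=X_v^1\cup X_v^2$ (with $X_u^1$ complete to $X_v^1$ and the ``$2$-parts'' each anticomplete to the other side); by symmetry $X_u^2\neq\emptyset$, and then choose $a\in X_u^2$ and take $(A,B)=(X_u^2,X_v^1)$ (with $b\in X_v^1$) if $X_v^2=\emptyset$, or $(A,B)=(X_u,X_v)$ (with $b\in X_v^2$) otherwise. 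In every case $A$ is a clique anticomplete to the clique $B$, and $A\setminus\{a\}\subseteq C(T_a)$, $B\setminus\{b\}\subseteq C(T_b)$.

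Assume for contradiction that neither $T_a$ nor $T_b$ is Tihany. Fix a $(\chi(G)-|K|-1)$-coloring of $G\setminus T_a$; by Lemma~\ref{basic}, every color class contains a vertex of $C(T_a)$. Since $B$ is a clique disjoint from $T_a$, it uses $|B|$ distinct colors, and in each such class the $C(T_a)$-witness must be non-adjacent to the $B$-vertex sharing the class. Using the hypotheses that $\{u,v\}$ is complete to $C(K)\setminus\{u,v\}$ and that no $F$-edge other than $\{u,v\}$ meets $v$, verify that $C(T_a)\setminus A$ is complete to $B$, so the required witnesses all lie in $A\setminus\{a\}$; since $A$ is a clique, these witnesses live in distinct classes, giving $|B|\leq|A\setminus\{a\}|<|A|$. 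The symmetric argument with $T_b$ yields $|A|<|B|$, a contradiction. The main obstacle is verifying the completeness claim $C(T_a)\setminus A\subseteq N(B)$ in the W-join sub-cases; it is precisely the two controls on $F$ (no stray $F$-edge on $\{u,v\}$, and the completeness condition on $y$ when $\{x,y\}\in F$ with $x\in K$) that prevent a common neighbor of $T_a$ outside $X_u$ from being non-adjacent to part of $B$.
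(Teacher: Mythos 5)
Your proof follows the same approach as the paper: choose a representative clique $K'$ of $K$ in $G$, define $T_a=K'\cup\{a\}$, $T_b=K'\cup\{b\}$ with $a,b$ and the anticomplete cliques $A,B$ selected from $X_u,X_v$ exactly as in the three cases (no $F$-edge on $\{u,v\}$; reduced $W$-join with $X_v^2$ empty; reduced $W$-join with $X_v^2$ nonempty), apply Lemma~\ref{basic} to a $(\chi(G)-|K|-1)$-coloring of $G\setminus T_a$, and derive $|A|>|B|$ from the fact that each of the $|B|$ color classes hit by $B$ must find its $C(T_a)$-witness inside the clique $A\setminus\{a\}$; the symmetric argument gives $|B|>|A|$, a contradiction. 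You also correctly flag, and attribute to the two hypotheses on $F$, the key completeness claim $C(T_a)\setminus A\subseteq N(B)$, which the paper asserts without elaboration.
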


We need an additional definition before proving the next lemma. Let
$K$ be a clique; we denote by $\overline C(K)$ the closed neighborhood
of $K$, i.e. $\overline C(K):=C(K)\cup K$.

\begin{lemma}\label{lemma:disjoint-neighborhoods}
Let $G$ be a graph such that \co. Let $A$ and $B$ be cliques such that $2\leq |A|,|B| \leq 3$ (i.e., each one is a brace or a triangle). If $\overline C(A)\cap \overline C(B) = \emptyset$ and $\overline C(A)\cup \overline C(B)$ contains no triads then at least one of $A,B$ is Tihany.
\begin{proof}
Assume not and let $k=\chi(G)-|A|$. By~\ref{basic}, in every $k$-coloring of $G\backslash A$ every color class must have a vertex in $C(A)$. As there is no triad in $\overline C(A)\cup \overline C(B)$, it follows that every vertex of $C(A)$ is in a color class with at most one vertex of $\overline C(B)$, thus $\overline C(A)> \overline C(B)$. By symmetry, it follows that $\overline C(A) < \overline C(B)$, a contradiction. This proves~\ref{lemma:disjoint-neighborhoods}.
\end{proof}
\end{lemma}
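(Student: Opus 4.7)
The plan is to argue by contradiction: assume neither $A$ nor $B$ is Tihany, so $\chi(G\setminus A)\leq \chi(G)-|A|$ and $\chi(G\setminus B)\leq \chi(G)-|B|$. I will then combine Lemma~\ref{basic} with the no-triad hypothesis to derive the strict inequalities $|\overline C(A)|>|\overline C(B)|$ and $|\overline C(B)|>|\overline C(A)|$, which is the contradiction.

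First, fix a $(\chi(G)-|A|)$-coloring of $G\setminus A$. By Lemma~\ref{basic}, every color class contains a vertex of $C(A)$, so $|C(A)|\geq \chi(G)-|A|$ and hence $|\overline C(A)|=|A|+|C(A)|\geq \chi(G)$. To bound $|\overline C(B)|$ from above, observe that $\overline C(A)\cap \overline C(B)=\emptyset$ forces in particular $A\cap \overline C(B)=\emptyset$, so every vertex of $\overline C(B)$ is colored by this same coloring. Each color class is a stable set of $G$, and by the no-triad hypothesis any stable set meets $\overline C(A)\cup \overline C(B)$ in at most two vertices; since each color class already contains a vertex of $C(A)\subseteq \overline C(A)$, it can contain at most one vertex of $\overline C(B)$. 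Summing over the $\chi(G)-|A|$ color classes yields $|\overline C(B)|\leq \chi(G)-|A|<\chi(G)\leq |\overline C(A)|$.

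Swapping the roles of $A$ and $B$ and repeating verbatim the argument with the $(\chi(G)-|B|)$-coloring of $G\setminus B$ gives the symmetric inequality $|\overline C(A)|<|\overline C(B)|$, the desired contradiction.

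The only mildly non-routine step is seeing how the no-triad hypothesis bites. The key insight is that the vertex of $C(A)$ which Lemma~\ref{basic} guarantees in each color class uses up one of the two ``slots'' available in $\overline C(A)\cup \overline C(B)$ under the no-triad condition, leaving at most one slot for the disjoint set $\overline C(B)$; this is what drives both halves of the squeeze. The size bounds $|A|,|B|\leq 3$ play no explicit role in the counting and serve only to fix the Tihany notion for small cliques, while the assumption $|A|,|B|\geq 2$ is used implicitly in the strict gap $\chi(G)-|A|<\chi(G)$.
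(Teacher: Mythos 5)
Your proposal is correct and follows essentially the same route as the paper's proof: apply Lemma~\ref{basic} to a $(\chi(G)-|A|)$-coloring of $G\setminus A$, use the disjointness and no-triad hypotheses to conclude each color class holds at most one vertex of $\overline C(B)$, derive $|\overline C(B)| < |\overline C(A)|$, and then obtain the symmetric inequality to reach a contradiction. Your write-up is somewhat more explicit than the paper's (spelling out that $A\cap\overline C(B)=\emptyset$ and that the $C(A)$-vertices across color classes are distinct), but the underlying argument is identical.
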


\begin{lemma}\label{cliquecutset}
Let $G$ be a claw-free graph such that \co. If $G$ admits a clique cutset, then there is a Tihany brace in $G$.
\begin{proof}
Let $K$ be a clique cutset. Let $A,B\subset V(G)\backslash K$ such that $A\cap B=\emptyset$ and $A\cup B\cup K=V(G)$. Let $\chi_A= \chi(G|(A\cup K))$ and $\chi_B = \chi(G|(B\cup K))$. By symmetry, we may assume that $\chi_A\geq \chi_B$.
\begin{claim}\label{claimcliquecutset}
$\chi(G)=\chi_A$
\end{claim}
Let $\mathcal{S}_{A}=(A_1,A_2,\ldots,A_{\chi_{A}})$ and $\mathcal{S}_B=(B_1,B_2,\ldots,B_{\chi_{B}})$ be optimal coloring of $G|(A\cup K)$ and $G|(B\cup K)$. Let $K=\{k_1,k_2,\ldots,K_{|K|}\}$. Up to renaming the stable sets, we may assume that $A_i\cap B_i=\{k_i\}$ for all $i=1,2,\ldots,|K|$. Then $\mathcal{S}=(A_1\cup B_1,A_2\cup B_2,\ldots,A_{\chi_B}\cup B_{\chi_B},A_{\chi_B+1},\ldots ,A_{\chi_A}\}$ is a $\chi_A$-coloring of $G$. This proves~\cref{claimcliquecutset}.\\

Now let $x\in B$ and $y\in K$ be such that $xy\in E(G)$. Then $\chi(G\backslash \{x,y\})\geq \chi(G|(A\cup K\backslash \{y\})\geq \chi_A-1 \geq \chi(G)-1$. Hence $\{x,y\}$ is a Tihany brace. This proves~\ref{cliquecutset}. 
\end{proof}
\end{lemma}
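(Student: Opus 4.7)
The plan is to exploit the fact that a clique cutset decomposes the coloring problem cleanly: optimal colorings on the two sides can be glued by relabeling colors on the cutset, and then removing a well-chosen brace that straddles the cut will preserve most of the chromatic number.

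First I would let $K$ be the clique cutset and write $V(G)\setminus K = A \cup B$ with $A,B$ nonempty and anticomplete. Set $\chi_A = \chi(G|(A\cup K))$ and $\chi_B = \chi(G|(B\cup K))$, and assume without loss of generality that $\chi_A \geq \chi_B$. The main preliminary step is to verify that $\chi(G) = \chi_A$. The inequality $\chi(G)\geq \chi_A$ is immediate, so only the upper bound needs work. Take optimal colorings of $G|(A\cup K)$ and $G|(B\cup K)$; since $K$ is a clique, its vertices receive pairwise distinct colors in each. By permuting the color names used on the $B$-side, one can arrange that the two colorings agree on $K$. Gluing them (extending the $B$-side coloring trivially to use any $\chi_A - \chi_B$ extra colors available) gives a proper $\chi_A$-coloring of $G$, proving the claim.

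Next I would pick any vertex $x \in B$ that has a neighbor $y \in K$. Such an edge exists because otherwise $B$ would already be separated from $K \cup A$ without needing $K$, so (restricting if necessary to the connected component witnessing $\chi(G)>\omega(G)$) we may assume some vertex of $B$ sees $K$. Then the subgraph $G|(A \cup (K \setminus \{y\}))$ is obtained from $G|(A \cup K)$ by deleting a single vertex, so its chromatic number is at least $\chi_A - 1 = \chi(G)-1$. Since this subgraph sits inside $G \setminus \{x,y\}$, we conclude $\chi(G\setminus\{x,y\}) \geq \chi(G)-1$, so the brace $\{x,y\}$ is Tihany.

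The only mild obstacle is the existence of the edge $\{x,y\}$; this is a connectedness technicality, handled by passing to the component where $\chi > \omega$ holds. It is worth noting that claw-freeness is never used — the lemma is really a general fact about graphs with a clique cutset satisfying $\chi > \omega$.
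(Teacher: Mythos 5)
Your proof is correct and takes essentially the same approach as the paper: glue optimal colorings of the two sides to show $\chi(G) = \chi_A$, then delete a brace $\{x,y\}$ with $x\in B$ and $y\in K$, observing that the untouched side $G|(A\cup(K\setminus\{y\}))$ still needs $\chi_A-1 = \chi(G)-1$ colors. Your added remark about the existence of such an edge (a connectedness point the paper glosses over) and your observation that claw-freeness is not used are both accurate.
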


\section{The Icosahedron and Long Circular Interval Graphs}
\begin{theorem}\label{icosa}
Let $G\in \mathcal{T}_1$. If \co, then there exists a Tihany brace in $G$.
\end{theorem}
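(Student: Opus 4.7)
My plan is to first use Corollary~\ref{reduced} to assume that $G$ is a reduced thickening of $(H,F)$, where $(H,F)$ is one of $(G_0,\emptyset)$, $(G_1,\emptyset)$, or $(G_2,F)$ for some $F\subseteq\{\{v_1,v_4\},\{v_6,v_9\}\}$, and then to split according to $H$. In the $G_1$ and $G_2$ cases I would produce a brace $K$ in $H$ whose common-neighbor set is a single vertex (hence trivially a clique), so that Lemma~\ref{dense_thickening2} applies and a Tihany brace of $G$ follows immediately.

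For $H=G_1$, I would take $K=\{v_2,v_{10}\}$: in $G_0$ its common neighbors are $\{v_1,v_{11}\}$, but since $v_{11}$ has been deleted, $C(K)=\{v_1\}$ in $G_1$. Because $F=\emptyset$, the hypotheses of Lemma~\ref{dense_thickening2} hold vacuously, and a Tihany brace of $G$ results. For $H=G_2$, I would take $K=\{v_1,v_9\}$: its common neighbors in $G_0$ are $\{v_0,v_{10}\}$, and $v_{10}$ is deleted in $G_2$, so $C(K)=\{v_0\}$. I would then verify $S_F(K)=\emptyset$ directly: the only $F$-partners of vertices of $K$ are $v_4$ (paired with $v_1$) and $v_6$ (paired with $v_9$), and they fail to lie in $N_{G_2}(v_9)$ and $N_{G_2}(v_1)$ respectively, so $C(K)\cup S_F(K)=\{v_0\}$ is a clique and Lemma~\ref{dense_thickening2} again applies.

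The case $H=G_0$ is the principal obstacle, because every edge of the icosahedron has exactly two common neighbors and these two vertices are never adjacent; hence no brace of $G_0$ is dense and Lemma~\ref{dense_thickening2} cannot be invoked through a brace. My plan here is to argue by contradiction using the specific brace $\{a,b\}$ with $a\in X_{v_0}$ and $b\in X_{v_1}$. If $\{a,b\}$ were not Tihany, Lemma~\ref{basic} would force every color class of any $(\chi(G)-2)$-coloring of $G\setminus\{a,b\}$ to contain a vertex of $C(\{a,b\})=(X_{v_0}\setminus\{a\})\cup(X_{v_1}\setminus\{b\})\cup X_{v_3}\cup X_{v_9}$. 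I would combine this color-counting restriction with the analogous restrictions obtained by running the same argument on the other two braces of the triangle $\{v_0,v_1,v_3\}$, and with the observation that the vertex $v_{11}$ together with its $5$-cycle of neighbors $\{v_2,v_4,v_6,v_8,v_{10}\}$ forms a wheel $W_5$ (surviving in $G\setminus\{a,b\}$ as a thickened wheel) whose chromatic number is forced up by the rim weights. The hard part will be orchestrating these inequalities against $\chi(G)>\omega(G)$ uniformly across all possible thickening weights, in particular when all bubble sizes outside $X_{v_3}\cup X_{v_9}$ are small so that the $C(\{a,b\})$-covering bound becomes tight.
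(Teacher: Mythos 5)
Your treatment of the $G_1$ and $G_2$ cases is correct and uses the same underlying tool as the paper (Lemma~\ref{dense_thickening2}); you just pick different dense braces ($\{v_2,v_{10}\}$, resp.\ $\{v_1,v_9\}$) than the paper's single choice $\{v_4,v_6\}$, which conveniently works for both $G_1$ and $G_2$ at once since $v_{10},v_{11}$ are both outside $N(v_4)\cap N(v_6)$ and outside the fuzzy pairs in $F$. That part of your proposal is a minor variant and is fine.

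The $G_0$ case, however, has a genuine gap. You correctly identify that no brace of $G_0$ is dense, and you correctly set up the covering constraint from Lemma~\ref{basic}: for a non-Tihany brace $\{a,b\}$ with $a\in X_{v_0}, b\in X_{v_1}$, every color class of a $(\chi-2)$-coloring of $G\setminus\{a,b\}$ meets $C(\{a,b\})\subseteq X_{v_0}\cup X_{v_1}\cup X_{v_3}\cup X_{v_9}$, and since $\alpha(G)=3$ this bounds $\sum_{i\notin\{0,1,3,9\}} w_i \le 2\bigl(w_0+w_1+w_3+w_9-2\bigr)$. But your plan is to combine this with only the two analogous inequalities from the other braces of the triangle $\{v_0,v_1,v_3\}$ plus an unspecified chromatic bound coming from the wheel on $v_{11},v_2,v_4,v_6,v_8,v_{10}$. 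Summing those three inequalities yields (after cancellation) roughly $w_4+w_6+w_7+w_8+w_{10}+w_{11}\le 2(w_0+w_1+w_3)-4$, which is easily satisfiable (e.g.\ make $w_0,w_1,w_3$ large), so no contradiction follows from the three braces alone; and you have not made the wheel estimate precise or shown how it closes the gap. Indeed you concede that ``the hard part will be orchestrating these inequalities,'' which means you have a plan, not a proof. The paper instead exploits the full vertex-transitivity of the icosahedron: it writes the covering inequality for \emph{every} brace $\{v_i,v_j\}$ of $G_0$ (there are 30) and sums them all. Each $v_m$ is in the ``close'' side $\{v_i,v_j\}\cup (N(v_i)\cap N(v_j))$ for exactly the 5 edges at $v_m$ plus the 5 edges inside $N(v_m)$, hence in the far side for exactly 20 braces, so the sum telescopes to $20\sum_i w_i\le 20\sum_i w_i-120$, a clean contradiction regardless of the weights. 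This global symmetric averaging is precisely what is missing from your sketch for the $G_0$ case.
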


\begin{proof}
Let $v_0,v_1,\ldots,v_{11}$ be as in the definition of the
icosahedron. Let $G_0,G_1,G_2$, and $F$ be as in the definition of
$\mathcal{T}_1$.  Then $G$ is a thickening of either
$(G_0,\emptyset)$, $(G_1,\emptyset)$, or $(G_2,F)$ for $F \subseteq
\{(v_1,v_4),(v_6,v_9)\}$.  For $0 \leq i \leq 11$, let $X_{v_i}$ be as
in the definition of thickening (where $X_{v_{11}}$ is empty when $G$
is a thickening of $(G_1,\emptyset)$ or $(G_2,F)$, and $X_{v_{10}}$ is
empty when $G$ is a thickening of $(G_2,F)$).  Let $x_i \in X_{v_i}$
and $w_i=|X_{v_i}|$.

First suppose that $G$ is a thickening of $(G_1,\emptyset)$ or
$(G_2,F)$. Then $C(\{x_4,x_6\})=X_{v_4} \cup X_{v_5} \cup X_{v_6}$ is
a clique. Therefore, $\{x_4,x_6\}$ is a Tihany brace by~\ref{W-1}.

So we may assume that $G$ is a thickening of
$(G_0,\emptyset)$. Suppose that no brace of $G$ is Tihany and let
$E=\{x_1,x_3\}$.  Then $G\backslash E$ is $(\chi-2)$-colorable.
By~\ref{basic}, every color class contains at least one vertex from
$C(E)=(X_1\cup X_2\cup X_3\cup X_0)\setminus\{x_1,x_3\}$. Since
$\alpha(G)=3$, it follows that every color class has at most two
vertices from $\bigcup_{i=4}^{11} X_{v_{i}}$. Hence we conclude that
$$w_4+w_5+w_6+w_7+w_8+w_9+w_{10}+w_{11} \leq 2\cdot(w_1+w_2+w_3+w_0
-2)$$ A similar inequality exists for every brace
$\{x_i,x_j\}$. Summing these inequalities over all braces
$\{x_i,x_j\}$, it follows that $(\sum_{i=0}^{11} 20w_i) \leq
(\sum_{i=0}^{11} 20w_i) - 120$, a contradiction. This
proves~\ref{icosa}.
\end{proof}

\begin{theorem}\label{longcircularinterval}
Let $G\in \mathcal{T}_2$. If \co, then there exists a Tihany brace in $G$.
\begin{proof}
Let $H,F,\Sigma,F_1,\ldots,F_k$ be as in the definition of
$\mathcal{T}_2$ such that $G$ is a thickening of $(H,F)$. Let $F_i$ be
such that there exists no $j$ with $F_i\subset F_j$. Let
$\{x_k,\ldots,x_l\}=V(H)\cap F_i$ and without loss of generality, we
may assume that $\{x_k,\ldots,x_l\}$ are in order on $\Sigma$. Since
$C(\{x_k,x_l\})=\{x_{k+1},\ldots,x_{l-1}\}$, it follows that
$\{x_k,x_l\}$ is dense. Hence by~\ref{dense_thickening2} there exists
a Tihany brace in $G$. This proves~\ref{longcircularinterval}.
\end{proof}
\end{theorem}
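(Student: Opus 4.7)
The natural strategy is to produce a dense brace in the underlying graph $H$ and then appeal to Lemma~\ref{dense_thickening2} to lift it to a Tihany brace of $G$. Since that lemma asks for a reduced thickening, the first step is to apply Corollary~\ref{reduced} and assume $G$ is a reduced thickening of $(H,F)$, where $H,F,\Sigma,F_1,\ldots,F_k$ are as in the definition of $\mathcal{T}_2$.

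The key geometric move is to pick an arc $F_i$ that is \emph{maximal} among $F_1,\ldots,F_k$ under inclusion, and to list $V(H)\cap F_i$ in order along $\Sigma$ as $x_1,\ldots,x_m$. I would then argue that the brace $K=\{x_1,x_m\}$ is dense in $H$, with $C(K)=\{x_2,\ldots,x_{m-1}\}$. One containment is immediate: every $x_j$ with $2\le j\le m-1$ lies on $F_i$, so these vertices are pairwise adjacent and each is adjacent to both $x_1$ and $x_m$. For the reverse containment, suppose a vertex $y\notin F_i$ were a common neighbor of $x_1$ and $x_m$; then there would be arcs $F_j,F_{j'}$ with $\{x_1,y\}\subseteq F_j$ and $\{x_m,y\}\subseteq F_{j'}$. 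By the maximality of $F_i$ neither $F_j$ nor $F_{j'}$ is contained in $F_i$, so each must exit $F_i$ past one of its endpoints and continue around $\Sigma$ to the external point $y$. A direct geometric check then yields $F_i\cup F_j\cup F_{j'}=\Sigma$, contradicting the hypothesis that no three of the arcs cover $\Sigma$.

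Once density of $K$ in $H$ is in hand, the $F$-side bookkeeping needed to invoke Lemma~\ref{dense_thickening2} (no pair of $C(K)$ lies in $F$, and $C(K)\cup S_F(K)$ is still a clique) reduces to the same maximality and arc-cover considerations: every pair of $F$ is the set of endpoints of a single arc, and by the maximality of $F_i$ together with the no-three-arcs-cover-$\Sigma$ condition, such a pair cannot have both of its vertices in the interior $\{x_2,\ldots,x_{m-1}\}$. The step I expect to be the main obstacle is precisely the geometric argument ruling out a common neighbor of $x_1,x_m$ outside $F_i$: packaging the maximality of $F_i$ with the arc-cover hypothesis calls for a careful case split on which endpoint of $F_i$ each of $F_j,F_{j'}$ crosses and how they are forced to meet at $y$. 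Once that is settled, Lemma~\ref{dense_thickening2} immediately supplies a Tihany brace of $G$.
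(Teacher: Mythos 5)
Your proposal follows essentially the same route as the paper's: pick a maximal arc $F_i$, observe that the brace formed by the extreme vertices of $V(H)\cap F_i$ is dense, and invoke Lemma~\ref{dense_thickening2}. You additionally spell out the geometric argument showing $C(\{x_k,x_l\})=\{x_{k+1},\ldots,x_{l-1}\}$ and the $F$-side hypotheses of Lemma~\ref{dense_thickening2}, both of which the paper leaves tacit. One caveat: Corollary~\ref{reduced}, which you invoke to pass to a reduced thickening, is stated only for a minimal counterexample to Theorem~\ref{main}, so applying it to an arbitrary $G\in\mathcal{T}_2$ needs a word of justification; the paper's own four-line proof has the same implicit step, since Lemma~\ref{dense_thickening2} also presupposes a reduced thickening.
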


\section{Non-2-substantial and Non-3-substantial Graphs}
In this section we study graphs where a few vertices cover all the triads. An antiprismatic graph $G$ is \textit{k-substantial} if for every $S\subseteq V(G)$ with $|S|<k$ there is a triad $T$ with $S \cap T = \emptyset$. The \textit{matching number} of a graph $G$, denoted by $\mu(G)$, is the number of edges in a maximum matching in $G$. Balogh et al.~\cite{quasi-line} proved the following theorem.

\begin{theorem}\label{alpha2}
Let $G$ be a graph such that $\alpha(G)=2$ and $\chi(G)>\omega(G)$. For any two integers $s,t\geq 2$ such that $s+t=\chi(G)+1$ there exists a partition $(S,T)$ of $V(G)$ such that $\chi(G|S)\geq s$ and $\chi(G|T)\geq t$.
\end{theorem}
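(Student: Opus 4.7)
The plan is to work in the complement $\bar G$, which is triangle-free since $\alpha(G)=2$. Independent sets of $G$ are cliques of $\bar G$ of size at most two, giving $\chi(G)=|V(G)|-\mu(\bar G)$, where $\mu$ denotes the matching number. Moreover $\omega(G)=\alpha(\bar G)=|V(G)|-\tau(\bar G)$, where $\tau$ is the vertex cover number. The hypothesis $\chi(G)>\omega(G)$ thus becomes $\tau(\bar G)>\mu(\bar G)$: the triangle-free graph $\bar G$ fails to be K\"onig--Egerv\'ary. Moreover, a partition $(S,T)$ of $V(G)$ satisfies the desired chromatic inequalities if and only if $\mu(\bar G[S])+\mu(\bar G[T])\le \mu(\bar G)-1$, so the task reduces to splitting $V(G)$ so that at least one edge of some maximum matching of $\bar G$ is forced to cross the partition.

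Assuming $s\le t$ without loss of generality, I would induct on $\chi(G)$, with the low-$(s,t)$ base cases covered by the results cited in the introduction. The inductive step would seek a non-edge $\{u,v\}$ of $G$ (equivalently an edge of $\bar G$) satisfying (i) $\chi(G\setminus\{u,v\})=\chi(G)-1$ and (ii) $G\setminus\{u,v\}$ still satisfies $\chi>\omega$. Condition (i) is equivalent to $\mu(\bar G)-\mu(\bar G-\{u,v\})=1$; picking $\{u,v\}$ to be an edge of some maximum matching of $\bar G$ immediately guarantees the drop is at most one, and a Gallai--Edmonds / Tutte--Berge analysis of the triangle-free non-K\"onig graph $\bar G$ then forces the drop to be exactly one for a suitable choice. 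Applying the inductive hypothesis to $G\setminus\{u,v\}$ with parameters $(s,t-1)$ yields a partition $(S',T')$ of $V(G)\setminus\{u,v\}$, and I would set $T:=T'\cup\{u,v\}$, $S:=S'$; a colour-forcing argument using the $\alpha(G)=2$ constraint then upgrades $\chi(G|T')\ge t-1$ to $\chi(G|T)\ge t$.

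The hard part will be twofold. First, verifying condition (ii) requires that the removal of $\{u,v\}$ does not simultaneously drop $\omega$, which would upset the strict inequality in the smaller graph. I would handle this by choosing $\{u,v\}$ disjoint from some maximum clique $Q$ of $G$; since $|V(G)\setminus Q|=\tau(\bar G)>\mu(\bar G)\ge 1$, there is room to pick the matching edge of $\bar G$ outside $Q$. Second, the colour-forcing step---guaranteeing that simply adding $u$ and $v$ back to $T'$ contributes at least one new colour class---requires careful use of the triad-freeness of $G$ in an optimal colouring of $G|T'$: since every colour class there has size at most two, one shows that $u$ and $v$ cannot both reuse existing colours without creating a triad. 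Combining these two delicate control arguments with careful bookkeeping of the matching and vertex-cover structure of $\bar G$ under recursive removal is what I expect to be the technical heart of the argument.
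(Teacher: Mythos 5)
The paper does not prove Theorem~\ref{alpha2}; it quotes it from Balogh, Kostochka, Prince and Stiebitz~\cite{quasi-line}, so there is no internal argument of the paper to compare against. Your translation into the triangle-free complement---$\chi(G)=|V(G)|-\mu(G^c)$, $\chi(G)>\omega(G)$ precisely when $\tau(G^c)>\mu(G^c)$, and Gallai--Edmonds---is the right setting, and it is the same machinery the present paper uses elsewhere (see Theorem~\ref{nonsubstantialsmallclique} and its reliance on Theorem~\ref{gallaiedmonds}). However, your reformulation is wrong at the first step: the condition $\mu(G^c[S])+\mu(G^c[T])\le\mu(G^c)-1$ is equivalent only to $\chi(G|S)+\chi(G|T)\ge s+t$, not to the conjunction $\chi(G|S)\ge s$ \emph{and} $\chi(G|T)\ge t$. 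The genuine requirement is the pair of bounds $\mu(G^c[S])\le|S|-s$ and $\mu(G^c[T])\le|T|-t$, which is strictly stronger: a partition may have $\chi(G|S)=s-1$ and $\chi(G|T)=t+1$ and still satisfy the sum inequality. No rebalancing lemma is offered, and moving a vertex across the partition need not preserve the sum bound, so the reduction as stated does not produce the desired partition.

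The inductive step also fails to close. You delete a non-edge $\{u,v\}$ of $G$, apply the hypothesis to $G\setminus\{u,v\}$ with parameters $(s,t-1)$ to obtain $(S',T')$, and set $T=T'\cup\{u,v\}$. But $u$ and $v$ are non-adjacent in $G$, so there is no reason that $\chi(G|T)>\chi(G|T')$: if some optimal colouring of $G|T'$ contains singleton classes into which $u$ and $v$ can be inserted, the chromatic number does not rise, and $\alpha(G)=2$ alone does not rule this out. The ``colour-forcing argument'' would need a concrete mechanism ensuring $u,v$ contribute a new colour in \emph{every} optimal colouring of $G|T$, coordinated with your other constraints on $\{u,v\}$ (lying on a maximum matching of $G^c$, disjoint from a fixed $\omega$-clique, Gallai--Edmonds position); none is given, and these constraints already compete with one another. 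A cleaner route in the spirit of this paper is to fix $s$ and produce one clique $K$ of size $s$ that is Tihany, i.e.\ $\chi(G\setminus K)\ge\chi(G)-s+1$, by analysing where an $s$-clique of $G$ can sit relative to the Gallai--Edmonds decomposition of $G^c$ (as in the proof of Theorem~\ref{nonsubstantialsmallclique}); then $(K,V(G)\setminus K)$ is the required partition without any recursion or rebalancing.
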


The following theorem is a result of Gallai and Edmonds on matchings and it will be used in the study of non-2-substantial and non-3-substantial graphs.

\begin{theorem}[Gallai-Edmonds Structure Theorem \cite{edmonds}, \cite{gallai}]\label{gallaiedmonds}
Let $G = (V,E)$ be a graph. Let $D$ denote the set of nodes which are not covered by at
least one maximum matching of $G$. Let $A$ be the set of nodes in $V\backslash D$ adjacent to at
least one node in $D$. Let $C = V\backslash (A \cup D)$. Then:
\begin{enumerate}
\item[i)] The number of covered nodes by a maximum matching in G equals to $|V|+|A|-c(D)$, where $c(D)$ denotes the number of components of the graph spanned by $D$.
\item[ii)] If $M$ is a maximum matching of $G$, then for every component $F$ of $D$, $E(D)\cap M$ covers all but one of the nodes of $F$, $E(C) \cap M$ is a perfect matching and M matches
all the nodes of $A$ with nodes in distinct components of $D$.
\end{enumerate}
\end{theorem}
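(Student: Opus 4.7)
The plan is to follow the classical augmenting-path strategy originating with Berge: a matching $M$ is maximum if and only if there is no $M$-augmenting path in $G$. From this I would deduce the theorem via three structural claims about the sets $D$, $A$, $C$.

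First, I would show that every component $F$ of the subgraph $G[D]$ is \emph{factor-critical}, meaning that $G[F]\setminus\{u\}$ admits a perfect matching for each $u\in F$. The idea is that, given some $v\in F$ exposed by a maximum matching $M_v$, one can transport the exposed vertex to any other $u\in F$ along a path inside $G[D]$. Pick $u\in F$ and a shortest path $v=u_0,u_1,\ldots,u_k=u$ inside $G[D]$; proceed by induction on $k$. If $u_{k-1}$ is exposed by a maximum matching $M'$, then taking $M_v\triangle M'$ together with the edge $u_{k-1}u_k$ yields, after a brief case analysis, a maximum matching exposing $u$. The delicate case is when $u$ is already saturated by $M'$ via an edge outside the path; one must argue that rerouting through $u_{k-1}u_k$ still produces a matching of the same size, which requires ruling out a would-be augmenting path.

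Second, I would verify that in any maximum matching $M$, every vertex $a\in A$ is matched by $M$ into a component of $G[D]$, and distinct vertices of $A$ go into distinct components. Saturation of $a$ is immediate: if $a$ were exposed, then $a\in D$, a contradiction. If $a$ were matched to some $a'\notin D$, or if two vertices $a,a'\in A$ were both matched into the same component $F$, then using the factor-criticality of $F$ from Step~1 together with the edge from $A$ into $F$ I can construct an $M$-augmenting path, contradicting the maximality of $M$.

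Third, the restriction of $M$ to $G[C]$ is a perfect matching of $G[C]$: no vertex $c\in C$ is exposed (otherwise $c\in D$), and the $M$-partner of $c$ lies in $C$, because otherwise a symmetric-difference argument with a maximum matching exposing a chosen vertex of $D$ would produce a maximum matching exposing $c$ itself, contradicting $c\notin D$. Part~(ii) follows, and counting saturated vertices gives $|C|+|A|+(|D|-c(D)+|A|)=|V|+|A|-c(D)$, proving~(i). The main obstacle throughout is Step~1: establishing factor-criticality requires careful bookkeeping of symmetric differences of maximum matchings to avoid inadvertently creating an augmenting path, and it is the key technical input from which the rest of the proof follows almost mechanically.
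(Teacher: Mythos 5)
The paper does not prove this theorem; it is quoted as a classical result with citations to Gallai and Edmonds, so there is no internal argument to compare yours against. Your outline follows the standard route to the Gallai--Edmonds decomposition (factor-criticality of components of $G[D]$, then the behaviour of any maximum matching on $A$ and $C$, then the count), and Steps~2 and~3 together with the final tally are correct modulo Step~1.

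Step~1 as written, however, has a genuine gap. The transport move you describe does not actually move the exposed vertex along the path. If $M'$ exposes $u_{k-1}$ and $u_k$ is matched by $M'$ to some $w$, then exchanging $u_k w$ for $u_{k-1}u_k$ yields a maximum matching that exposes $w$, not $u_k$; the exposed vertex jumps to the former partner of $u_k$, which need not be on your chosen path nor even in $D$. The usual repair (Gallai's lemma) argues differently: if $G[F]$ were not factor-critical, take a maximum matching $M$ of $G[F]$, choose exposed $u,w$ at minimum distance in $F$ (they are non-adjacent, else $M$ is not maximum), pick an interior vertex $t$ of a shortest $u$--$w$ path, take a maximum matching $N$ exposing $t$, and analyze the $M\triangle N$ component at $t$ to contradict the minimality of $\mathrm{dist}(u,w)$. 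You also tacitly equate ``every vertex of $F$ is missed by some maximum matching of $G$'' with ``$G[F]$ is factor-critical''; the latter is intrinsic to the induced subgraph, and one must first show that a maximum matching of $G$, restricted to $F$, is a near-perfect matching of $G[F]$ (i.e.\ that at most one $M$-edge leaves $F$, and that into $A$ only), which is precisely the nontrivial structural fact one is trying to establish. As written, Step~1 assumes pieces of the conclusion and mis-states the key exchange.
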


\begin{lemma}\label{neighborcobipartite}
Let $G$ be an antiprismatic graph. Let $K$ be a clique and $u,v\in V(G)\backslash \overline C(K)$ be non-adjacent. If $\alpha(G|(C(K)\cup\{u,v\}))= 2$ and $\alpha(G|K\cup \{u,v\})=3$, then $G|\overline C(K)$ is cobipartite. 
\begin{proof}
Since there is no triad in $C(K)\cup \{u,v\}$, we deduce that there is no vertex in $C(K)$ anticomplete to $\{u,v\}$. Since $G$ is claw-free and $\alpha(G|K\cup \{u,v\})=3$, it follows that there is no vertex in $C(K)$ complete to $\{u,v\}$. Let $C_u,C_v\subseteq C(K)$ be such that $C_u\cup C_v = C(K)$ and for all $x\in C(K)$, $x$ is adjacent to $u$ and non-adjacent to $v$ if $x\in C_u$, and $x$ is adjacent to $v$ and non-adjacent to $u$ if $x\in C_v$. Since $\alpha(G|(C_v\cup\{u\}))=2$, we deduce that $C_v$ is a clique and by symmetry $C_u$ is a clique. Hence $\overline C(K)$ is the union of two cliques. This proves~\ref{neighborcobipartite}.
\end{proof}
\end{lemma}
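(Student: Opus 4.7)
The plan is to extract structural constraints on $C(K)$ from the two stability hypotheses, partition $C(K)$ according to adjacency with $\{u,v\}$, and then bundle $K$ with one side to exhibit a cover of $\overline{C}(K)$ by two cliques.

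First I would show that no vertex of $C(K)$ is anticomplete to $\{u,v\}$: any such $x$ would yield a triad $\{x,u,v\}$ inside $C(K)\cup\{u,v\}$, contradicting $\alpha(G|(C(K)\cup\{u,v\})) = 2$. Next, since $K$ is a clique and $u,v$ are non-adjacent, the hypothesis $\alpha(G|K\cup\{u,v\})=3$ forces the triad in $G|K\cup\{u,v\}$ to be of the form $\{u,v,k\}$ for some $k \in K$ that is anticomplete to $\{u,v\}$. This witness $k$ is exactly what is needed to rule out vertices of $C(K)$ that are complete to $\{u,v\}$: if some $x\in C(K)$ were complete to $\{u,v\}$, then $x$ together with $k,u,v$ would form a claw centered at $x$ (since $x\in C(K)$ means $x$ is adjacent to $k$, while $k,u,v$ are pairwise non-adjacent by choice of $k$ and by hypothesis on $u,v$), contradicting the claw-freeness implied by $G$ being antiprismatic.

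With those two observations, every vertex of $C(K)$ is adjacent to exactly one of $u,v$, so I can partition $C(K) = C_u \cup C_v$ according to which of the two a vertex sees. To show $C_u$ is a clique, suppose $x,y \in C_u$ are non-adjacent; then $\{x,y,v\}$ is a triad inside $C(K)\cup\{u,v\}$, again contradicting the first stability hypothesis. The symmetric argument shows $C_v$ is a clique.

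To finish, note that $K$ is complete to $C(K)$ by the very definition of $C(K)$, so $K\cup C_u$ is a clique and $C_v$ is a clique, and together they partition $\overline{C}(K)$; hence $G|\overline{C}(K)$ is cobipartite. The one delicate point is recognizing that $\alpha(G|K\cup\{u,v\})=3$ must be used not just as a numerical statement but to produce the specific $k\in K$ anticomplete to both $u$ and $v$ that drives the claw argument; once $k$ is isolated, everything else is bookkeeping.
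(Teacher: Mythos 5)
Your proof is correct and follows essentially the same route as the paper's: rule out vertices of $C(K)$ anticomplete to $\{u,v\}$ (via the no-triad hypothesis), rule out vertices complete to $\{u,v\}$ (via claw-freeness and the witness $k\in K$), partition $C(K)$ into $C_u\cup C_v$, and observe each part is a clique. You simply make two points explicit that the paper leaves implicit — the extraction of the vertex $k$ driving the claw argument, and the observation that $K\cup C_u$ and $C_v$ give the required clique cover of $\overline{C}(K)$.
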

\begin{theorem}\label{nonsubstantialsmallclique}
Let $G$ be a claw-free graph such that \co. Let $K$ be a clique such that $\alpha(G\backslash K)\leq 2$. Then there exists a Tihany clique of size at most $|K|+1$ in $G$.

\begin{proof}
Assume not. Let $n=|V(G)|$, $w\in C(K)$ and $K'=K\cup \{w\}$ (such a vertex $w$ exists since $K$ is not Tihany).
\begin{claim}\label{claim:nssc1}
$\chi(G)=n-\mu(G^c)$.
\end{claim}
Since $K'$ is not Tihany, it follows that $\chi(G\backslash K')=\chi(G)-|K'|$. Since $\alpha(G\backslash K')\leq 2$, we deduce that $\chi(G\backslash K') \geq \frac{n-|K'|}{2}$, and thus $\chi(G) \geq \frac{n+|K'|}{2}$. Hence in every optimal coloring of $G$ the color classes have an average size strictly smaller than $2$, and since $G$ is claw-free, we deduce that there is an optimal coloring of $G$ where all color classes have size $1$ or $2$. It follows that $\chi(G)\leq n-\mu(G^c)$. But clearly $\chi(G) \geq n-\mu(G^c)$, thus $\chi(G)=n-\mu(G^c)$. This proves~\cref{claim:nssc1}. 
\begin{claim}\label{claim:nssc2}
Let $T$ be a clique of size $|K|+1$ in $G$, then $\chi(G\backslash T)=n-|T|-\mu(G^c\backslash T)$.
\end{claim}
Since $T$ is not Tihany, it follows that $\chi(G\backslash T) = \chi(G)-|T| \geq \frac{n+|K'|}{2}-|T|=\frac{n-|T|}{2} = \frac{|V(G\backslash T)|}{2}$. Hence in every optimal coloring of $G\backslash T$, the color classes have an average size smaller than $2$, and since $G$ is claw-free, we deduce that there is an optimal coloring of $G\backslash T$ where all color classes have size $1$ or $2$. It follows that $\chi(G\backslash T)\leq |V(G\backslash T|-\mu(G^c\backslash T)$. Hence $\chi(G\backslash T) = n-|T|-\mu(G^c\backslash T)$. This proves~\cref{claim:nssc2}.\\

Let $A,D,C$ be as in~\ref{gallaiedmonds}. Since $\chi(G)\geq \frac{n+|K'|}{2}$ and $\chi(G)=n-\mu(G^c)$, we deduce that $\mu(G^c)\leq \frac{n-|K'|}{2}$. By~\ref{gallaiedmonds} i), we deduce that $\mu(G^c) =\frac{n+|A|-c(D)}{2}$. Thus, it follows that $c(D)\geq |K'|$. Let $D_1,D_2,\ldots,D_{c(D)}$ be the anticomponents of $D$. Let $d_i\in D_i$ for $i=1,\ldots, c(D)$.

\begin{claim}\label{claim:nssc3}
$|D_i|=1$ for all $i$. 
\end{claim}
Assume not and by symmetry assume that $|D_1|>1$. Since $G$ is claw-free, we deduce that $\alpha(G|D_1)=2$. Thus there exist $x,y\in D_1$ such that $x$ is adjacent to $y$. Now $T=\{x,y,d_2,\ldots,d_{|K|}\}$ is a clique of size $|K|+1$ and by~\ref{gallaiedmonds} ii), it follows that $\mu(G^c\backslash T)<\mu(G^c)$. By~\cref{claim:nssc1} and~\cref{claim:nssc2}, it follows that $\chi(G\backslash T)+|T|=n-\mu(G^c\backslash T)> n-\mu(G^c) = \chi(G)$, a contradiction. This proves~\cref{claim:nssc3}.\\

Let $T=\{d_1,\ldots,d_{|K|+1}\}$. By~\cref{claim:nssc3}, it follows that $C(T)\cap D$ is a clique. By~\ref{W-1}, we deduce that $C(T)\cap A\neq \emptyset$. Let $x\in C(T)\cap A$. Now $S=\{d_1,\ldots, d_{|K|},x\}$ is a clique of size $|K|+1$ and by~\ref{gallaiedmonds} ii), it follows that $\mu(G^c\backslash S)<\mu(G^c)$. By~\cref{claim:nssc1} and~\cref{claim:nssc2}, it follows that $\chi(G\backslash S)+|S|=n-\mu(G^c\backslash S)> n-\mu(G^c) = \chi(G)$, a contradiction. This concludes the proof of~\ref{nonsubstantialsmallclique}.
\end{proof}

\end{theorem}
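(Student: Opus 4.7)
The plan is to assume for contradiction that no clique of size at most $|K|+1$ is Tihany, reduce chromatic-number questions about $G$ and its small-clique deletions to matching questions in the complement $G^c$, and finally invoke the Gallai--Edmonds structure theorem to force a contradiction.

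First I would carry out two easy reductions. Since $K$ is a clique, $\alpha(G)\le\alpha(G\setminus K)+1\le 3$; if $\alpha(G)\le 2$ then Theorem~\ref{alpha2} already produces a Tihany brace. So assume $\alpha(G)=3$; in particular every triad of $G$ meets $K$. Since $K$ is not Tihany, Lemma~\ref{basic} forces $C(K)\neq\emptyset$, so I pick $w\in C(K)$ and let $K':=K\cup\{w\}$. By assumption $K'$ is also not Tihany, so $\chi(G\setminus K')=\chi(G)-|K'|$; combined with $\chi(G\setminus K')\ge (n-|K'|)/2$ (from $\alpha(G\setminus K')\le 2$, writing $n=|V(G)|$) this gives $\chi(G)\ge (n+|K'|)/2 > n/2$.

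Next I would establish the key identities $\chi(G)=n-\mu(G^c)$ and $\chi(G\setminus T)=n-|T|-\mu(G^c\setminus T)$ for every clique $T$ of size $|K|+1$. The bound $\chi(G)>n/2$ says an optimal coloring of $G$ has average class size strictly less than $2$, so it contains many singletons. Using claw-freeness I would argue that any color class of size $\ge 3$ in an optimal coloring can be shrunk by a swap with a singleton class, producing an optimal coloring whose classes all have size at most $2$. Such a coloring corresponds to a matching in $G^c$, yielding the identity; the same argument applied to $G\setminus T$ (which also satisfies the analogous chromatic lower bound since $T$ is not Tihany) gives the second identity.

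Finally I would apply Gallai--Edmonds to $G^c$. From $2\mu(G^c)=n+|A|-c(D)$ (Theorem~\ref{gallaiedmonds}(i)) and $\mu(G^c)\le (n-|K'|)/2$, I obtain $c(D)\ge |A|+|K|+1\ge |K|+1$. Picking one vertex from each of $|K|+1$ anticomponents of $D$ produces a clique of size $|K|+1$ in $G$, because distinct anticomponents have no $G^c$-edge between them. I would then modify this transversal into a clique $T$ whose removal from $G^c$ strictly decreases the matching number: if some anticomponent has size $\ge 2$, claw-freeness bounds $\alpha$ inside it by $2$ so it contains a $G$-edge that I swap in; otherwise $C(T)\cap D$ is already a clique in $G$ and Lemma~\ref{W-1} (applied to the non-Tihany $T$) supplies some $x\in C(T)\cap A$ to swap in. In either case Theorem~\ref{gallaiedmonds}(ii) forces $\mu(G^c\setminus T)<\mu(G^c)$, and the matching identities then give $\chi(G\setminus T)>\chi(G)-|T|$, contradicting that $T$ is not Tihany.

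The main obstacle will be the chromatic-to-matching reduction: carefully exploiting the abundance of singleton classes forced by $\chi(G)>n/2$ together with claw-freeness to eliminate every color class of size $\ge 3$ from an optimal coloring, and doing so both for $G$ and for its small-clique deletions $G\setminus T$.
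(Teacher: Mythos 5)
Your proposal follows essentially the same route as the paper's proof: fix $w\in C(K)$, set $K'=K\cup\{w\}$, derive $\chi(G)\ge (n+|K'|)/2$, convert the chromatic statements for $G$ and for $G\setminus T$ into matching identities for $G^c$ using claw-freeness to force all color classes to size at most $2$, and then apply Gallai--Edmonds to $G^c$ to build a transversal clique $T$ from the $D$-anticomponents whose deletion drops $\mu(G^c)$, contradicting the matching identities. The only cosmetic differences from the paper are that you make the $\alpha(G)\le 2$ base case explicit via Theorem~\ref{alpha2} (the paper leaves it implicit), and you compress the two final cases (some $D_i$ of size $\ge 2$, versus all singletons plus a vertex of $A$ supplied by Lemma~\ref{W-1}) into one paragraph rather than a separate Claim~3; the substance is the same.
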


\begin{theorem}\label{heavynon2sub}
Let $H$ be a claw-free graph such that there exists $x\in V(H)$ with $\alpha(H\backslash x)=2$. Let $G$ be a reduced thickening of $H$ such that \co~and $|X_x|>1$. Then for all $\{u,v\}\in X_x$, \cGuv.

\begin{proof}
Let $u,v\in X_x$. We may assume that $\{u,v\}$ is not Tihany. Let $k=\chi(G\backslash\{u,v\})$ and $\mathcal{S}=(S_1,S_2,\ldots, S_k)$ be a $k$-coloring of $G\backslash \{u,v\}$. By~\ref{basic}, $S_i\cap C(\{u,v\})\neq \emptyset$. Let $I_l=\{i:|S_i|=l\}$ and let $O=C(\{u,v\})\cap \bigcup_{i\in I_1\cup I_2} S_i$ and $P=C(\{u,v\})\cap \bigcup_{i\in I_3} S_i$. 

Since $\alpha(H\backslash x)=2$, it follows that $S_i \cap X_x\neq \emptyset $ for all $i\in I_3$. Hence, $P$ is a clique complete to $O$ and thus $\omega(G|O\cup P) = \omega(G|O) + |I_3|$. Since \co, we deduce that $\omega(G|O)<|I_1\cup I_2|$. By~\ref{neighborcobipartite} and since $O\subseteq \overline C(X_x)$, we deduce that $G|O$ is cobipartite. Hence $\chi(G|O)=\omega(G|O)<|I_1\cup I_2|$. Thus the coloring $\mathcal{S}$ does not induce an optimal coloring of $G|O$. It follows that there exists an augmenting antipath $P=p_1-p_2-\ldots-p_{2l}$ in $O$. Now let $T_i=\{p_{2i-1},p_{2i}\}$ for $i=1,\ldots,l$. Let $s$ be such that $p_1\in S_s$ and $e$ be such that $p_{2l}\in S_e$. They are the color classes where the augmenting antipath starts and ends. If $|S_s|=2$, let $T_{l+1}=(\{u\}\cup S_s\backslash p_1)$, otherwise let $T_{l+1}=\{u\}$. If $|S_e|=2$, let $T_{l+2}=(\{v\}\cup S_e\backslash p_{2l})$, otherwise let $T_{l+2}=\{v\}$. Let $J=\{i| S_i\cap V(P) \neq \emptyset\}$. Clearly $|J|= l+1$. Now $(T_1,T_2,\ldots, T_{l+2})$ is a (l+2)-coloring of $\bigcup_{i\in J} S_i \cup \{u,v\}$, which together with the color classes $S_i$ for $i\notin J$ create a $k+1$-coloring of $G$, a contradiction. This proves~\ref{heavynon2sub}.
\end{proof}
\end{theorem}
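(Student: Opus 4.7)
The plan is a proof by contradiction. Suppose $\{u,v\}$ is not Tihany and let $\mathcal{S}=(S_1,\ldots,S_k)$ be a $k$-coloring of $G\setminus\{u,v\}$ with $k=\chi(G)-2$. By~\ref{basic} every $S_i$ meets $C(\{u,v\})$. Since $X_x$ is a clique and $\alpha(H\setminus x)=2$, any stable set of $G$ meets $X_x$ in at most one vertex, so $\alpha(G)\le 3$ and $|S_i|\in\{1,2,3\}$ for all $i$. Partition by size via $I_\ell=\{i:|S_i|=\ell\}$, and set $O=C(\{u,v\})\cap\bigcup_{i\in I_1\cup I_2}S_i$ and $Q=C(\{u,v\})\cap\bigcup_{i\in I_3}S_i$.

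The technical heart of the argument is to prove that $Q$ is a clique complete to $O$ with $|Q|=|I_3|$. For each $i\in I_3$ the class $S_i$ is a triad of $G$, and since $\alpha(H\setminus x)=2$ the corresponding triad of $H$ must contain $x$, so $S_i$ meets $X_x\setminus\{u,v\}$ in a distinguished vertex $w_i$. Exploiting the reduced thickening hypothesis to control possible changeable-edge pairs incident to $x$, the only vertex of $S_i$ lying in $C(\{u,v\})$ is $w_i$, whence $Q=\{w_i:i\in I_3\}\subseteq X_x$ is a clique; the same reduced-thickening control shows that $Q$ is complete to $O$. Consequently $\omega(G|O\cup Q)=\omega(G|O)+|I_3|$, and appending the adjacent pair $u,v$ (both complete to $O\cup Q$) to a maximum clique gives $\omega(G)\ge\omega(G|O)+|I_3|+2$. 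Combined with $\omega(G)<\chi(G)=|I_1\cup I_2|+|I_3|+2$, this yields $\omega(G|O)<|I_1\cup I_2|$. Applying~\ref{neighborcobipartite} (with $X_x$ playing the role of the clique and $u,v$ providing the external triad witness) shows that $G|O$ is cobipartite, so $\chi(G|O)=\omega(G|O)<|I_1\cup I_2|$, meaning the restriction of $\mathcal{S}$ to $O$ is not an optimal coloring of $G|O$.

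Encoding monochromatic pairs in $O$ as a matching in the bipartite graph $(G|O)^c$, non-optimality supplies an augmenting antipath $p_1\d p_2\c p_{2l}$ in $G|O$. Replace the affected color classes by the $l$ new pairs $\{p_{2i-1},p_{2i}\}$; the endpoint classes $S_s\ni p_1$ and $S_e\ni p_{2l}$ each lose exactly one vertex and can absorb $u$ and $v$, respectively. A short case analysis on whether $|S_s|,|S_e|$ equal $1$ or $2$ verifies that this produces a proper $(k+1)$-coloring of $G$, contradicting $\chi(G)=k+2$. The main obstacle is the structural step pinning down $Q$: ruling out non-$X_x$ contributions to $Q$ arising from $W$-joins at $x$ is exactly where the \emph{reduced} thickening hypothesis is indispensable, and it requires a careful case analysis of the partition structure of reduced $W$-joins incident to $x$.
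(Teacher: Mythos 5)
Your proposal follows essentially the same route as the paper: contradiction, $(\chi-2)$-coloring of $G\setminus\{u,v\}$, each class meets $C(\{u,v\})$, the triad classes contribute a clique $P$ complete to $O$, deduce $\omega(G|O)<|I_1\cup I_2|$, invoke Lemma~\ref{neighborcobipartite} to get $G|O$ cobipartite, find an augmenting antipath, and splice in $u,v$ to build a $(k+1)$-coloring. You make a few implicit steps explicit (the bound $\omega(G)\geq\omega(G|O)+|I_3|+2$, and the encoding of size-two color classes as a matching in $(G|O)^c$), and you are candid that the claim $Q$ is a clique complete to $O$ needs case analysis on reduced $W$-joins at $x$, a step the paper asserts without elaboration.

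One minor inaccuracy: in your parenthetical on how Lemma~\ref{neighborcobipartite} is invoked, you cannot take $u,v$ themselves as the ``external triad witness'', since that lemma requires two nonadjacent vertices lying outside $\overline C(K)$, and $u,v\in X_x\subseteq\overline C(X_x)$. The witnesses must be other nonadjacent vertices anticomplete to the relevant clique; the paper also leaves this application terse, so this is a shared imprecision rather than a deviation from its argument, but as written your parenthetical is not literally correct.
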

The next lemma is a direct corollary of~\ref{nonsubstantialsmallclique} and~\ref{heavynon2sub}.
\begin{lemma}\label{non2sub}
Let $H$ be a non-2-substantial claw-free graph. Let $G$ be a reduced thickening of an augmentation of $H$ such that \co. Then there exists a Tihany brace in $G$.
\end{lemma}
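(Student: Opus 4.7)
The plan is to exploit the non-2-substantial hypothesis on $H$ to produce a vertex $x$ with $\alpha(H\setminus x)\leq 2$, and then split on $|X_x|$. By definition of non-2-substantial there exists $S\subseteq V(H)$ with $|S|\leq 1$ meeting every triad of $H$; so either $\alpha(H)\leq 2$ (and any $x$ works) or some $x\in V(H)$ lies in every triad. Since an augmentation only toggles a bounded set of pre-specified edges and a thickening preserves $\alpha$ on induced subgraphs (a stable set uses at most one vertex per $X_v$), this property transfers to $G$: there is a vertex (still called $x$) with $\alpha(G\setminus X_x)\leq 2$.

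If $\alpha(H)\leq 2$, then $\alpha(G)\leq 2$. Since $\chi(G)>\omega(G)$, Theorem~\ref{alpha2} applied with $s=2$ and $t=\chi(G)-1$ yields a partition $(S,T)$ of $V(G)$ with $\chi(G|S)\geq 2$ and $\chi(G|T)\geq \chi(G)-1$; any edge $\{u,v\}$ in $G|S$ is then a Tihany brace.

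Otherwise $\alpha(H)=3$ and some $x\in V(H)$ has $\alpha(H\setminus x)=2$. If $|X_x|\geq 2$, Theorem~\ref{heavynon2sub} applies to the augmentation of $H$ and its reduced thickening $G$, and any two distinct $u,v\in X_x$ form a Tihany brace. If $|X_x|=1$, identify $X_x=\{x\}$, so $\alpha(G\setminus x)\leq 2$, and apply Theorem~\ref{nonsubstantialsmallclique} with $K=\{x\}$ to obtain a Tihany clique of size at most $2$ in $G$. If that clique is a brace we are done. Otherwise it is the singleton $\{x\}$ itself, giving $\chi(G\setminus x)\geq \chi(G)$, hence $\chi(G\setminus x)=\chi(G)$. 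Then $G\setminus x$ satisfies $\chi(G\setminus x)=\chi(G)>\omega(G)\geq \omega(G\setminus x)$ and $\alpha(G\setminus x)\leq 2$, so one more appeal to Theorem~\ref{alpha2} (with $s=2$, $t=\chi(G)-1$) produces a Tihany brace in $G\setminus x$, which is automatically a Tihany brace in $G$.

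The only nontrivial wrinkle is the singleton outcome of Theorem~\ref{nonsubstantialsmallclique} in the case $|X_x|=1$, which is handled by the brief extra reduction to Theorem~\ref{alpha2} just described; every other step is a direct invocation of one of the two cited theorems, consistent with the author's statement that the lemma is a corollary of \ref{nonsubstantialsmallclique} and \ref{heavynon2sub}.
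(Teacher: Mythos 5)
The paper gives no proof of this lemma, merely asserting that it ``is a direct corollary of~\ref{nonsubstantialsmallclique} and~\ref{heavynon2sub}''; your three-way split ($\alpha(H)\leq 2$; $\alpha(H\setminus x)\leq 2$ with $|X_x|\geq 2$; $\alpha(H\setminus x)\leq 2$ with $|X_x|=1$) is exactly the intended realization of that remark, with Theorem~\ref{alpha2} filling in where needed, so the overall route is right.

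There is, however, one concrete slip. After applying~\ref{nonsubstantialsmallclique} with $K=\{x\}$, you claim that if the resulting Tihany clique has size $1$ then ``it is the singleton $\{x\}$ itself.'' The statement of~\ref{nonsubstantialsmallclique} only guarantees the existence of \emph{some} Tihany clique of size at most $|K|+1$, with no control over which one; if the singleton is some $v\neq x$, then $\alpha(G\setminus v)\leq 2$ need not hold, and your reduction to~\ref{alpha2} applied to $G\setminus v$ no longer goes through. The repair is easy and actually removes the need for that second appeal to~\ref{alpha2}: for any Tihany singleton $\{v\}$ and any neighbour $w$ of $v$, a superclique of a Tihany clique is Tihany, so $\chi(G\setminus\{v,w\})\geq \chi(G\setminus v)-1\geq \chi(G)-1$ and $\{v,w\}$ is a Tihany brace; and $v$ cannot be isolated since one may take $G$ connected with $\chi(G)>\omega(G)\geq 2$. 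A secondary point worth flagging: the transfer $\alpha(H\setminus x)\leq 2 \Rightarrow \alpha(G\setminus X_x)\leq 2$ is justified only by the slogan that thickenings preserve stability numbers of induced subgraphs. Because a fuzzy pair in $F$ can realize a non-edge between $H$-adjacent vertices, this is not automatic and in principle needs the constraints imposed on $F$ in the antiprismatic setting; that said, the paper's own proof of~\ref{heavynon2sub} makes the identical unexamined passage, so on this point you are at the paper's level of rigour.
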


Now we look at non-3-substantial graphs.

\begin{lemma}\label{theorem:not3substantial}
Let $H$ be a non-3-substantial antiprismatic graph. Let $u,v\in H$ be such that $\alpha(H\backslash \{u,v\})=2$.
Let $G$ be a reduced thickening of $H$ such that \co. If $u$ is not adjacent to $v$, then there exists a Tihany brace or triangle in $G$.
\begin{proof}
Assume not. Let $N_u=C(u)\backslash C(\{u,v\}$ and $N_v=C(u)\backslash C(\{u,v\})$. Since $H$ is antiprismatic, it follows that $N_u$ and $N_v$ are antimatchings.

By~\ref{non2sub}, we deduce that $N_u$ and $N_v$ are not cliques. Let $x_u,y_u\in N_u$ be not adjacent, and $x_v,y_v\in N_v$ be not adjacent. Since $\alpha(H\backslash \{u,v\})=2$ and $H$ is antiprismatic, we may assume by symmetry that $x_ux_v,y_uy_v$ are edges, and $x_uy_v,y_ux_v$ are non-edges.
Since $\alpha(h\backslash \{u,v\})=2$ and $H$ is antiprismatic, it follows that every vertex in $C(\{u,v\})$ is either strongly complete to $x_ux_v$ and strongly anticomplete to $y_uy_v$, or strongly complete to $y_uy_v$ and strongly anticomplete to $x_ux_v$. Let $(N_x,N_y)$ be the partition of $C(\{u,v\})$ such that all $x\in N_x$ are complete to $x_ux_v$ and and all $y\in N_y$ are complete to $y_uy_v$.

Assume first that $N_x\neq \emptyset$ and $N_y\neq \emptyset$. Let $n_x\in N_x$ and $n_y\in N_y$ and let $T_u=\{u,y_u,n_y\}$ and $T_v=\{v,x_v,n_x\}$. Clearly $T_u$ and $T_v$ are triangles. 
\begin{claim}\label{claim:not3sub}
$\alpha(G|(\overline C(T_u)\cup \overline C(T_v))=2$ and $\overline C(T_u)\cap \overline C(T_v)=\emptyset$.
\end{claim}
Assume not. Since $\overline C(T_u)\subseteq N_y\cup N_u\cup \{u\}$ and $\overline C(T_v)\subseteq N_x\cup N_v\cup \{v\}$, we deduce that $\overline C(T_u)\cap \overline C(T_v)=\emptyset$. Let $T\in \overline C(T_u)\cup \overline C(T_v)$ be a triad. By symmetry, we may assume that $u\in T$. Clearly, $T\backslash u\in N_v$. But since $H$ is antiprismatic, we deduce that $T\backslash u\subseteq C(n_x)$, hence $T\backslash u\notin \overline C(T_u)\cup \overline C(T_v)$, a contradiction. This proves~\cref{claim:not3sub}.\\

Now let $S_u,S_v\in G$ be triangles such that $|S_u\cap X_u|=|S_u\cap X_{y_u}|=|S_u\cap X_{n_y}|=1$ and $|S_v\cap X_v|=|S_v\cap X_{x_v}|=|S_v\cap X_{n_x}|=1$. By~\cref{claim:not3sub} and~\ref{lemma:disjoint-neighborhoods} and since $G$ is a reduced thickening of $H$, we deduce that there is a Tihany triangle in $G$.

Now assume that at least one of $N_x,N_y$ is empty. By symmetry, we may assume that $N_x$ is empty. Since $C(\{u,x_u\})$ is an antimatching, by~\ref{lemma:goodclique} there exists a Tihany triangle in $G$. This concludes the proof of~\ref{theorem:not3substantial}.
\end{proof}
\end{lemma}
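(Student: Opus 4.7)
The plan is a proof by contradiction: assume no brace and no triangle of $G$ is Tihany, and derive a contradiction from the antiprismatic hypothesis on $H$ combined with the tools of Section 3. The setup I would use is a careful local analysis of $H$ around $u$ and $v$.

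Let $N_u = C(u) \setminus C(\{u,v\})$ and $N_v = C(v) \setminus C(\{u,v\})$. Since $H$ is antiprismatic and $u,v$ are non-adjacent, any two non-adjacent vertices in $N_u$ together with $u$ and $v$ would produce a forbidden $4$-set, so $N_u$ (and symmetrically $N_v$) is an antimatching. I would then invoke \ref{non2sub} to rule out $N_u$ or $N_v$ being a clique: if, say, $N_u$ were a clique then after deleting a suitable vertex the resulting graph would be non-2-substantial and a Tihany brace would already exist. Hence there exist non-adjacent pairs $x_u, y_u \in N_u$ and $x_v, y_v \in N_v$. Using $\alpha(H \setminus \{u,v\}) = 2$ and antiprismaticity, the bipartite adjacency between $\{x_u,y_u\}$ and $\{x_v,y_v\}$ must, after relabeling, consist exactly of the edges $x_ux_v$ and $y_uy_v$. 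The same two hypotheses force every vertex $w \in C(\{u,v\})$ to be either complete to $\{x_u,x_v\}$ and anticomplete to $\{y_u,y_v\}$, or complete to $\{y_u,y_v\}$ and anticomplete to $\{x_u,x_v\}$; this yields a partition $C(\{u,v\}) = N_x \cup N_y$.

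The argument then splits on whether both $N_x$ and $N_y$ are non-empty. If both are, pick $n_x \in N_x$ and $n_y \in N_y$ and form the triangles $T_u = \{u, y_u, n_y\}$ and $T_v = \{v, x_v, n_x\}$ in $H$. Disjointness of $\overline C(T_u)$ and $\overline C(T_v)$ follows because $\overline C(T_u) \subseteq \{u\} \cup N_u \cup N_y$ while $\overline C(T_v) \subseteq \{v\} \cup N_v \cup N_x$. Ruling out triads in $\overline C(T_u) \cup \overline C(T_v)$ is more delicate: any such triad must meet $\{u,v\}$ because $\alpha(H \setminus \{u,v\}) = 2$, and then antiprismaticity forces the remaining two vertices to be adjacent to $n_x$ or $n_y$ and hence to lie outside the relevant closed neighborhood. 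Once both verifications are in place, lifting $T_u$ and $T_v$ to triangles $S_u, S_v$ of $G$ via the thickening and applying \ref{lemma:disjoint-neighborhoods} delivers a Tihany triangle, contradicting our assumption. In the remaining case one of $N_x, N_y$ is empty; by symmetry assume $N_x = \emptyset$. Then $C(\{u,x_u\})$ consists of $y_u$ together with vertices in $N_y$, and the pattern above makes this set an antimatching, so \ref{lemma:goodclique} applied to the brace $\{u,x_u\}$ produces a Tihany triangle directly.

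The main obstacle will be the two triad/disjointness verifications in the first subcase, which amount to a careful bookkeeping of which of the six named vertices each potential neighbor is adjacent to. Almost every step reuses the combination of $\alpha(H \setminus \{u,v\}) = 2$ and antiprismaticity, so I expect the proof to be mostly a clean but repeated application of these two constraints rather than the introduction of any new ingredient.
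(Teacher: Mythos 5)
Your proposal reproduces the paper's own proof essentially step for step: the same sets $N_u, N_v, N_x, N_y$, the appeal to Lemma~\ref{non2sub} to extract non-adjacent pairs, the same triangles $T_u=\{u,y_u,n_y\}$ and $T_v=\{v,x_v,n_x\}$ combined with Lemma~\ref{lemma:disjoint-neighborhoods} in the case $N_x,N_y\neq\emptyset$, and Lemma~\ref{lemma:goodclique} applied to $\{u,x_u\}$ when one of them is empty. The only caveat is that your triad-exclusion step is stated loosely (a vertex being adjacent to $n_x$ does not by itself remove it from $\overline C(T_v)$ — the point is rather that antiprismaticity forces at least one vertex of $T\setminus\{u\}$ to be non-adjacent to $n_x$), but this is exactly the bookkeeping the paper also does, and the argument goes through.
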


\begin{lemma}\label{theorem:not3substantialadj}
Let $H$ be a non-3-substantial antiprismatic graph. Let $u,v\in H$ be such that $\alpha(G\backslash \{u,v\})=2$.
Let $G$ be a reduced thickening of $(H,F)$ for some valid $F\subseteq V(G)^2$ such that \co. If $u$ is adjacent to $v$, then there exists a Tihany clique $K$ in $G$ with $|K|\leq 4$.
\begin{proof}

Assume not. By~\ref{nonsubstantialsmallclique}, we may assume that $|X_u\cup X_v|>2$. By~\ref{non2sub}, we may assume that $|X_u|>0$ or $|X_v|>0$. If $|X_u|=1$, then $G\backslash X_u$ is a reduced thickening of a non-2-substantial antiprismatic graph. By~\ref{heavynon2sub}, there exists a brace $\{x,y\}$ in $X_v$ such that $\chi(G\backslash(\{x,y\}\cup X_u)) \geq \chi(G\backslash X_u)-1$. But $\chi(G\backslash X_u)-1 \geq \chi(G)-2$, hence $\{x,y\}\cup X_u$ is a Tihany triangle, a contradiction. Thus $|X_u|>1$, and by symmetry $|X_v|>1$.

Let $x_1,y_1\in X_u$ and $x_2,y_2\in X_v$, thus $C=\{x_1,x_2,y_1,y_2\}$ is a clique of size $4$. 

Let $k=\chi(G\backslash C)$ and $\mathcal{S}=(S_1,S_2,\ldots, S_k)$ be a $k$-coloring of $G\backslash C$. By~\ref{basic}, $S_i\cap N(C)\neq\emptyset$. For $l=1,2,3$ let $I_l=\{i:|S_i|=l\}$ and let $O=N(C)\cap \bigcup_{i\in I_1\cup I_2} S_i$ and $P=N(C)\cap \bigcup_{i\in I_3} S_i$. 

Since $\alpha(H\backslash \{u,v\})=2$, it follows that $S_i \cap (X_u\cup X_v)\neq \emptyset $ for all $i\in I_3$. Hence, $\omega(G|O\cup P) = \omega(G|O) + |I_3|$. Since \co, we deduce that $\omega(G|O)<|I_1\cup I_2|$. By~\ref{neighborcobipartite}, we deduce that $G|O$ is cobipartite. Hence $\chi(G|O)=\omega(G|O)<|I_1|+|I_2|$. Thus the coloring $\mathcal{S}$ does not induce an optimal coloring of $G|O$. It follows that there exists an augmenting antipath $P=p_1-p_2-\ldots-p_{2l}$ in $O$. Now let $T_i=\{p_{2i-1},p_{2i}\}$ for $i=1,\ldots,l$. Let $s$ be such that $p_1\in S_s$ and $e$ be such that $p_{2l}\in S_e$. They are the color classes where the augmenting antipath starts and ends. 
Since $S_s\backslash p_1$ is not complete to $\{x_1,y_1\}$, we deduce that there exists $\hat s\in \{1,2\}$ such that $x_{\hat s}$ is antiadjacent to $S_s\backslash p_1$. Let $T_{l+1}=\{x_{\hat s}\}\cup S_s \backslash p_1$ and $T_{l+2}=\{x_1,x_2\}\backslash x_{\hat s}$. 
Since $S_e\backslash p_{2l}$ is not complete to $\{x_2,y_2\}$, we deduce that there exists $\hat e\in \{1,2\}$ such that $x_{\hat e}$ is antiadjacent to $S_e\backslash p_{2l}$. Let $T_{l+3}=\{x_{\hat e}\}\cup S_e\backslash p_{2l}$ and $T_{l+4}=\{y_1,y_2\}\backslash x_{\hat e}$.

Let $J=\{i| S_i\cap V(P) \neq \emptyset\}$. Clearly $|J|= l+1$. Now $(T_1,T_2,\ldots, T_{l+2},T_{l+3},T_{l+4})$ is a (l+4)-coloring of $\bigcup_{i\in J} S_i \cup \{x_1,x_2,y_1,y_2\}$, which together with the color classes $S_i$, for $i\notin J$, create a $k+3$-coloring of $G$, a contradiction. This proves~\ref{theorem:not3substantialadj}.
\end{proof}
\end{lemma}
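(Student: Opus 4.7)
The plan is a proof by contradiction: assume no Tihany clique of size at most $4$ exists in $G$. First I would reduce to the case $|X_u|, |X_v| \geq 2$. Since $u \sim v$ in $H$, the set $X_u \cup X_v$ is a clique of $G$ with $\alpha(G \setminus (X_u \cup X_v)) = \alpha(H \setminus \{u,v\}) = 2$, so~\ref{nonsubstantialsmallclique} forces $|X_u \cup X_v| > 2$ (otherwise a Tihany clique of size at most $3$ already exists). If $|X_u| = 1$ then $G \setminus X_u$ is a reduced thickening of the non-2-substantial antiprismatic graph $H \setminus u$, and~\ref{heavynon2sub} supplies a brace $\{a,b\} \subseteq X_v$ with
$$\chi(G \setminus (X_u \cup \{a,b\})) \geq \chi(G \setminus X_u) - 1 \geq \chi(G) - 2,$$
so $X_u \cup \{a,b\}$ is a Tihany triangle, contradicting our assumption. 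Symmetrically $|X_u|, |X_v| \geq 2$.

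Next I pick $x_1, y_1 \in X_u$ and $x_2, y_2 \in X_v$, forming a $4$-clique $C = \{x_1, y_1, x_2, y_2\}$. Assuming $C$ is not Tihany, fix a $(\chi(G)-4)$-coloring $\mathcal{S} = (S_1, \ldots, S_k)$ of $G \setminus C$. By~\ref{basic} every $S_i$ meets $N(C)$. Because each $X_w$ is a clique and $\alpha(H \setminus \{u,v\}) = 2$, every $S_i$ has size at most $3$, and any $S_i$ of size $3$ must contain a vertex of $X_u \cup X_v$. Partition the indices by $I_l = \{i : |S_i| = l\}$ for $l = 1,2,3$ and set
$$O = N(C) \cap \bigcup_{i \in I_1 \cup I_2} S_i, \qquad P = N(C) \cap \bigcup_{i \in I_3} S_i.$$

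As in the proof of~\ref{heavynon2sub}, $P$ is a clique complete to $O$, hence $\omega(G|(O \cup P)) = \omega(G|O) + |I_3|$; combined with $\chi(G) > \omega(G)$ this yields $\omega(G|O) < |I_1| + |I_2|$. Applying~\ref{neighborcobipartite} to the clique $X_u \cup X_v$ shows that $G|O$ is cobipartite, so $\chi(G|O) = \omega(G|O) < |I_1| + |I_2|$. Thus the restriction of $\mathcal{S}$ to $O$ is not optimal, and a standard augmenting argument produces an antipath $p_1, p_2, \ldots, p_{2l}$ in $G|O$ whose endpoints $p_1 \in S_s$ and $p_{2l} \in S_e$ lie in distinct color classes. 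Set $T_i = \{p_{2i-1}, p_{2i}\}$ for $1 \leq i \leq l$. The crucial observation at each endpoint is that $S_s \setminus \{p_1\}$ cannot be complete to both of $x_1, y_1$: there is $x_{\hat s} \in \{x_1, y_1\}$ antiadjacent to $S_s \setminus \{p_1\}$, so take $T_{l+1} = \{x_{\hat s}\} \cup (S_s \setminus \{p_1\})$ and $T_{l+2} = \{x_1, y_1\} \setminus \{x_{\hat s}\}$, and mirror the construction on the $\{x_2, y_2\}$ side to obtain $T_{l+3}, T_{l+4}$. These $l+4$ new stable sets together with the untouched classes $S_i$ (for $i \notin J := \{i : S_i \cap \{p_1,\ldots,p_{2l}\} \neq \emptyset\}$, where $|J| = l+1$) yield a coloring of $G$ with $k+3 = \chi(G)-1$ colors, the desired contradiction.

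The main technical obstacle is invoking~\ref{neighborcobipartite} in the present setup where $u \sim v$ in $H$ (the lemma is stated for a non-adjacent pair): one has to locate an appropriate witness pair inside $\overline{C}(X_u \cup X_v)$, and the $\alpha = 2$ hypothesis coming from non-3-substantiality is what makes this work. The secondary delicate point is the endpoint bookkeeping in the recoloring step, where each of the four vertices of $C$ must be inserted into a new stable class without creating an adjacency; this hinges on the observation that no common neighbor of $C$ lying in a small color class can dominate both vertices of $X_u$ or both vertices of $X_v$ simultaneously.
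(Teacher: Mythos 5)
Your proposal follows essentially the same route as the paper's proof: the same reduction to $|X_u|,|X_v|\geq 2$ via~\ref{nonsubstantialsmallclique} and~\ref{heavynon2sub}, the same $4$-clique $C=\{x_1,y_1,x_2,y_2\}$, the partition of color classes by size, the application of~\ref{neighborcobipartite} to get cobipartiteness of $G|O$, and the augmenting antipath recoloring. One small point in your favor: your endpoint bookkeeping, taking $x_{\hat s}\in\{x_1,y_1\}$ with $T_{l+2}=\{x_1,y_1\}\setminus\{x_{\hat s}\}$ and symmetrically $x_{\hat e}\in\{x_2,y_2\}$ with $T_{l+4}=\{x_2,y_2\}\setminus\{x_{\hat e}\}$, is internally consistent, whereas the paper's text sets $T_{l+4}=\{y_1,y_2\}\setminus x_{\hat e}$ with $\hat e\in\{1,2\}$, which would leave $T_{l+4}=\{y_1,y_2\}$, a pair of \emph{adjacent} vertices of $C$ --- almost certainly a typo that your version silently corrects. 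The two delicate points you flag (applying~\ref{neighborcobipartite} when $u\sim v$, and justifying that $S_s\setminus\{p_1\}$ cannot be complete to both vertices of one of the two thickened cliques) are genuine, and the paper is equally terse on both; neither write-up spells out that, away from fuzziness, a vertex $q\notin C(C)$ with $q\notin X_u\cup X_v$ must be anticomplete to all of $X_u$ or all of $X_v$, which is what makes the endpoint insertion work.
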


The following lemma is a direct corollary of~\ref{theorem:not3substantial} and~\ref{theorem:not3substantialadj}.

\begin{lemma} \label{mainnot3sub}
Let $H$ be a non-3-substantial antiprismatic graph. Let $G$ be a reduced thickening of $H$ such that \co. Then there exists a Tihany clique $K \subset V(G)$ with $|K|\leq 4$.
\end{lemma}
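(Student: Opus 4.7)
The plan is a simple case-analysis dispatching to the three lemmas already established in this section: \ref{non2sub}, \ref{theorem:not3substantial}, and \ref{theorem:not3substantialadj}. Since $H$ is non-3-substantial, I would fix a set $S\subseteq V(H)$ of minimum cardinality with $|S|\leq 2$ that meets every triad of $H$.

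If $|S|\leq 1$, then $H$ is itself non-2-substantial, and \ref{non2sub} applies directly: since $G$ is a reduced thickening of $H$ (a reduced thickening of the trivial augmentation $(H,\emptyset)$) with $\chi(G)>\omega(G)$, it supplies a Tihany brace in $G$, which is a Tihany clique of size $2\leq 4$, as required.

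Otherwise $|S|=2$; write $S=\{u,v\}$. Because every triad of $H$ meets $\{u,v\}$ we have $\alpha(H\setminus\{u,v\})\leq 2$. I would next observe that in fact $\alpha(H\setminus\{u,v\})=2$: if instead $\alpha(H\setminus\{u,v\})\leq 1$ then $H\setminus\{u,v\}$ is a clique, so no triad of $H$ can contain two vertices of $H\setminus\{u,v\}$, forcing every triad of $H$ to have the form $\{u,v,c\}$; but then the single vertex $u$ already meets every triad, contradicting the minimality of $|S|$. With $\alpha(H\setminus\{u,v\})=2$ secured, the hypotheses of \ref{theorem:not3substantial} and \ref{theorem:not3substantialadj} are literally met, and I split on adjacency: if $uv\notin E(H)$, apply \ref{theorem:not3substantial} to produce a Tihany brace or triangle in $G$; if $uv\in E(H)$, apply \ref{theorem:not3substantialadj} to produce a Tihany clique of size at most $4$ in $G$.

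No genuine obstacle arises, since the two non-trivial non-3-substantial sublemmas have already absorbed all the combinatorial work. The only step requiring any attention is the one-line minimality argument in the previous paragraph, which is needed purely to ensure that $\alpha(H\setminus\{u,v\})$ equals $2$ exactly rather than being smaller; once that is done the conclusion really is a direct corollary.
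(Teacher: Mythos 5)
Your proposal is correct and follows exactly the approach the paper intends: the paper dismisses~\ref{mainnot3sub} as a ``direct corollary'' of~\ref{theorem:not3substantial} and~\ref{theorem:not3substantialadj} without further comment, and you have supplied the missing bookkeeping, namely the dispatch to~\ref{non2sub} when the hitting set has size at most~$1$, and the minimality argument showing that otherwise $\alpha(H\setminus\{u,v\})$ is exactly~$2$ so that the hypotheses of the two sublemmas are literally satisfied. The adjacency case split then matches the two sublemmas one-for-one, so nothing further is required.
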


\section{Complements of orientable prismatic graphs}
\label{sec:orientable} In this section we study the complements of orientable prismatic graphs. A graph is \textit{prismatic} if its complement is antiprismatic. Let $G$ be a graph. The \textit{core} of $G$ is the union of all the triangles in $G$. If $\{a,b,c\}$ is a triangle in $G$ and both $b,c$ only belong to one triangle in $G$, then $b$ and $c$ are said to be \textit{weak}. The \textit{strong core} of $G$ is the subset of the core such that no vertex in the strong core is weak. As proved in~\cite{claws1}, if $H$ is a thickening of $(G,F)$ for some valid $F\subseteq V(G)^2$ and $\{x,y\}\in F$, then $x$ and $y$ are not in the strong core.

A {\em three-cliqued claw-free graph} $(G,A,B,C)$ consists of a claw-free graph $G$ and three cliques $A,B,C$ of $G$, pairwise disjoint and with union $V(G)$. The complement of a tree-cliqued graph is a \textit{3-coloured graph}.
Let $n\ge 0$, and for $1\le i \le n$, let $(G_i,A_i,B_i,C_i)$ be a three-cliqued graph,
where $V(G_1)\l V(G_n)$ are all nonempty and pairwise vertex-disjoint.
Let $A = A_1\cup \cdots \cup A_n$,
$B = B_1\cup \cdots \cup B_n$, and $C = C_1\cup \cdots \cup C_n$, and let $G$ be the graph with vertex set
$V(G_1)\cup \cdots \cup V(G_n)$ and with adjacency as follows:
\begin{itemize}
\item for $1\le i \le n$, $G|V(G_i) = G_i$;
\item for $1\le i<j\le n$, $A_i$ is complete to $V(G_j)\setminus B_j$; $B_i$ is complete to $V(G_j)\setminus C_j$;
and $C_i$ is complete to $V(G_j)\setminus A_j$; and
\item for $1\le i<j\le n$, if $u\in A_i$ and $v\in B_j$ are adjacent then $u,v$ are both in no triads; and the same
applies if $u\in B_i$ and $v\in C_j$, and if $u\in C_i$ and $v\in A_j$.
\end{itemize}
In particular, $A,B,C$ are cliques, and so $(G,A,B,C)$ is a three-cliqued graph and $(G^c,A,B,C)$ is a 3-coloured graph; we call the sequence $(G_i,A_i,B_i,C_i)$ $(i = 1\l n)$ a {\em worn hex-chain} for $(G,A,B,C)$. When $n = 2$ we say that $(G,A,B,C)$ is a {\em worn hex-join} of $(G_1,A_1,B_1,C_1)$ and $(G_2,A_2,B_2,C_2)$. Similarly, the sequence $(G^c_i,A_i,B_i,C_i)\;(i = 1\l n)$ is a {\em worn hex-chain} for $(G^c,A,B,C)$, and when $n = 2$, $(G^c,A,B,C)$ is a {\em worn hex-join} of $(G^c_1,A_1,B_1,C_1)$ and $(G^c_2,A_2,B_2,C_2)$.
Note also that every triad of $G$ is a triad of one of $G_1\l G_n$. If each $G_i$ is claw-free then so is $G$ and if each $G^c_i$ is prismatic then so is $G^c$.

If $(G,A,B,C)$ is a three-cliqued graph, and $\{A',B',C'\}  = \{A,B,C\}$, then $(G,A',B',C')$ is also a three-cliqued graph, that we say is a {\em permutation} of $(G,A,B,C)$.

A list of the definitions needed for the study of the prismatic graphs can be found in appendix~\ref{app:orientable}. The structure of prismatic graph has been extensively studied in \cite{claws1} and \cite{claws2}; the resulting two main theorems are the following.
\begin{lemma}\label{decomp_orientable_prism}
Every orientable prismatic graph that is not 3-colourable is either not 3-substantial, or a cycle of triangles graph, or a ring of five graph, or a mantled $L(K_{3,3})$.
\end{lemma}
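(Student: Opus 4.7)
The plan is to derive this structural dichotomy by a careful case analysis of the triangle structure of $G$, in the spirit of the development in \cite{claws1} and \cite{claws2}. First I would fix an orientation: for an orientable prismatic graph, every triangle $T$ admits a cyclic orientation, and these orientations are globally compatible in the sense that the partition of vertices into the three ``classes'' defined by $T$ (via which edge of $T$ they are adjacent to) is consistent as $T$ varies. This partition is the natural candidate for a 3-colouring whenever nothing forces a conflict, so the proof really amounts to identifying exactly when a conflict is unavoidable.

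The second step is to restrict to the 3-substantial case and control the core. In a 3-substantial orientable prismatic graph, the hypothesis that every small vertex set avoids some triad, combined with the prismatic condition (no anti-claws, few edges per 4-set in $G^c$), forces every vertex to lie in enough triangles that the core is essentially all of $V(G)$. One shows that only a bounded number of ``mantle'' vertices can lie outside the strong core, which both constrains how the orientation can extend and prepares the ground for the ``mantled'' exceptional class.

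The third step is to analyse how triangles attach to each other. Orientability severely restricts branching of triangles along shared edges, so the triangle-intersection structure is essentially cyclic, plus a finite list of degenerate configurations. When this structure is a genuine cycle, the obstruction to extending the local 3-colouring globally is a twist in the cyclic orientation; such twists give exactly the cycle of triangles graphs that fail to be 3-colourable. The remaining non-3-colourable configurations arise from two small exceptional structures: a 5-cycle of triangles with extra adjacencies giving a ring of five, and a configuration built from $L(K_{3,3})$ with a controlled mantle giving the mantled $L(K_{3,3})$.

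The main obstacle will be the exhaustive verification that no other orientable prismatic, 3-substantial graph fails to be 3-colourable; this is the hard content of \cite{claws2}. In a self-contained treatment it would proceed by induction on $|V(G)|$, decomposing $G$ along canonical cutsets (worn hex-joins and their degenerations, as set up earlier in this section), and showing that each indecomposable piece either belongs to one of the three named families or admits a 3-colouring that lifts through the decomposition. Rather than reprove this, I would invoke the decomposition machinery of \cite{claws1, claws2} and merely check that the four outcomes in the statement cover every indecomposable base case.
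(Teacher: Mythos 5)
The paper does not prove this lemma at all: it is stated as a black-box structural result imported from Chudnovsky and Seymour's ``Claw-free graphs I'' (\cite{claws1}, which treats orientable prismatic graphs; note the paper's bibliography has a typo attributing claws2 to orientable rather than non-orientable prismatic graphs). Your proposal ultimately takes the same route --- you explicitly say you would invoke the decomposition machinery of \cite{claws1, claws2} rather than reprove it --- so in that sense you match the paper exactly. The preceding sketch you give (global compatibility of triangle orientations, core analysis under 3-substantiality, triangle-adjacency structure forcing a cyclic pattern, and the two sporadic families) is a reasonable high-level paraphrase of the actual argument in \cite{claws1}, but since neither you nor the authors supply a self-contained proof, the sketch is flavour rather than substance; the essential content of both is the citation.
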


\begin{lemma}\label{decomp_3cliques}
Every 3-coloured prismatic graph admits a worn chain decomposition with all terms in $\mathcal Q_0\cup\mathcal Q_1\cup \mathcal Q_2$.
\end{lemma}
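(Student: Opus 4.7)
The plan is to induct on $|V(G)|$, following the strategy developed in \cite{claws1} and \cite{claws2}. Given a 3-coloured prismatic graph $(G,A,B,C)$, the inductive step proceeds by a dichotomy: either $(G,A,B,C)$ already lies in $\mathcal Q_0\cup\mathcal Q_1\cup\mathcal Q_2$, in which case we output the trivial chain of length $1$, or we produce a worn hex-join decomposition $(G_1,A_1,B_1,C_1),(G_2,A_2,B_2,C_2)$ with $|V(G_i)|<|V(G)|$, apply induction to each term, and concatenate the two resulting worn chains.

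First I would analyse the triangle structure. Since $G^c$ is antiprismatic, each edge of $G^c$ lies in at most one triangle of $G^c$, which translates into strong constraints on the triads of $G$. I would split on the size and connectivity of the core of $G^c$ (as defined just above the statement). If the core is small, a direct verification places $(G,A,B,C)$ into one of the basic classes. Otherwise, I would look at the strong core together with the three cliques $A^c,B^c,C^c$ of $G^c$ and try to build a partition $V(G)=V(G_1)\cup V(G_2)$ that respects the three colour classes and witnesses a worn hex-join, using the adjacency conditions required: $A_1$ complete to $V(G_2)\setminus B_2$, $B_1$ complete to $V(G_2)\setminus C_2$, $C_1$ complete to $V(G_2)\setminus A_2$, and cross-edges of the remaining type allowed only between vertices lying in no triad. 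The existence of such a partition is driven by finding a non-trivial homogeneous-pair-like cut in $G^c$ consistent with the three-cliquing.

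The main obstacle is the indecomposable case: showing that whenever no worn hex-join exists, $(G,A,B,C)$ must lie in $\mathcal Q_0\cup\mathcal Q_1\cup\mathcal Q_2$. To handle it, I would separate along the orientable/non-orientable dichotomy for the underlying prismatic graph $G$. For the orientable part, Lemma~\ref{decomp_orientable_prism} already lists the exceptional classes (non-3-substantial, cycle of triangles, ring of five, mantled $L(K_{3,3})$), and for each I would verify by hand that the induced 3-colouring forces membership in one of the three basic families. For the non-orientable part, one has to use the much more restrictive structural result for non-orientable prismatic graphs (the other half of \cite{claws1}), which again reduces to a short list of configurations that one checks case-by-case against the definitions of $\mathcal Q_0,\mathcal Q_1,\mathcal Q_2$ in the appendix.

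I would expect the bookkeeping — rather than any single idea — to be the hardest part: one must ensure at every stage that the cross-adjacency condition involving triads is preserved, so that the intermediate three-cliqued graphs produced by successive worn hex-joins are all genuinely 3-coloured prismatic and remain eligible for the inductive hypothesis. Since this entire analysis is essentially the content of \cite{claws1,claws2}, in the present paper the lemma is invoked as a black box and only the resulting decomposition is used in subsequent sections.
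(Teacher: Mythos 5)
The paper does not prove Lemma~\ref{decomp_3cliques}; it is quoted verbatim as a black-box structural result from Chudnovsky and Seymour's work on prismatic graphs (\cite{claws1}), exactly as you observe in your final paragraph. So there is no in-paper proof to compare against, and the real content of your proposal is your sketch of how the cited argument goes.

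That sketch has two substantive errors. First, you propose to split on orientable versus non-orientable and then, ``for the orientable part,'' invoke Lemma~\ref{decomp_orientable_prism}. But that lemma is explicitly a statement about orientable prismatic graphs that are \emph{not} 3-colourable; its four outcomes (non-3-substantial, cycle of triangles, ring of five, mantled $L(K_{3,3})$) are precisely the obstructions to 3-colourability. It is logically inapplicable to a graph that comes equipped with a 3-colouring, and trying to ``verify by hand that the induced 3-colouring forces membership in one of the three basic families'' for those four outcomes is vacuous. Second, the orientable/non-orientable dichotomy itself is spurious here: a prismatic graph with a proper 3-colouring is automatically orientable (each triangle meets all three colour classes, so ordering the colours cyclically orients every triangle compatibly), which is why \cite{claws2} plays no role in this lemma. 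The actual source is the portion of \cite{claws1} that analyzes 3-coloured orientable prismatic graphs directly via the worn hex-chain machinery; the ``indecomposable'' terminal cases there are exactly $\mathcal{Q}_0$, $\mathcal{Q}_1$, $\mathcal{Q}_2$, not the exceptional families of Lemma~\ref{decomp_orientable_prism}. Your high-level inductive framework (decompose by worn hex-join, recurse, otherwise land in an atomic class) is the right shape, but the specific reductions you propose to drive it do not apply.
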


In the remainder of the section, we use these two results to prove our main theorem for complements of orientable prismatic graphs.  We begins with some results that deal with the various outcomes of \ref{decomp_orientable_prism}.
\begin{lemma}\label{cycleoftriangles}
Let $H$ be a prismatic cycle of triangles and $G$ be a reduced thickening of $(\overline H,F)$ for some valid $F\in V(G)^2$ such that $\chi(G)>\omega(G)$. Then there exists a Tihany brace or triangle in $G$.
\begin{proof}
Let the set $X_i$ be as in the definition of a cycle of triangles. Up to renaming the sets, we may assume $|\hat X_{2n}|=|\hat X_4|=1$. Let $u\in \hat X_{2i}$ and $v\in \hat X_4$; hence $uv$ is an edge. We have 
$$C_H(\{u,v\})=\bigcup_{j=1 \mod 3, j\geq 4}X_{j} \cup R_1 \cup L_3.$$

If $|\hat X_2|>1$, then $|R_1|=|L_3|=\emptyset$ and so $C_H(\{u,v\})$ is a clique. Therefore by~\ref{dense_thickening2}, there is a Tihany brace in $G$. If $|\hat X_2|=1$, the only non-edges in $\overline G|C_H(\{u,v\})$ are a perfect anti-matching between $R_1$ and $L_3$. Hence by~\ref{lemma:goodclique}, there is a Tihany triangle in $G$. This proves~\ref{cycleoftriangles}
\end{proof}
\end{lemma}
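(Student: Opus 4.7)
The plan is to exploit the explicit adjacency structure of a prismatic cycle of triangles to locate an edge of the underlying graph $\overline H$ whose common neighborhood is either a clique or differs from a clique only by a perfect antimatching, and then appeal to Lemma~\ref{dense_thickening2} or Lemma~\ref{lemma:goodclique} to produce the desired Tihany brace or triangle in $G$.

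First I would unpack the definition of a cycle of triangles: the vertex set of $H$ is partitioned into triangles $X_1,\ldots,X_{2n}$ with prescribed subsets $\hat X_i$ together with ``link'' sets $R_i, L_i$ governing the adjacencies between consecutive triangles. Using the cyclic symmetry of the definition, I would relabel so that $|\hat X_{2n}| = |\hat X_4| = 1$, and let $u$ and $v$ be their unique vertices. From the adjacency rules $u$ and $v$ are adjacent in $\overline H$, so they lift to an edge of $G$.

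The core calculation is to identify
\[
C_{\overline H}(\{u,v\}) = \bigcup_{\substack{j\geq 4 \\ j \equiv 1 \,(\mathrm{mod}\, 3)}} X_j \;\cup\; R_1 \;\cup\; L_3,
\]
by running through the cycle-of-triangles adjacencies triangle by triangle; each $X_j$ in the union is a triad of $H$ (hence a triangle of $\overline H$), and distinct terms are pairwise complete in $\overline H$, so the only possible non-edges inside this neighborhood live between $R_1$ and $L_3$. Then I split on $|\hat X_2|$. If $|\hat X_2|>1$, the definition of a cycle of triangles forces $R_1 = L_3 = \emptyset$, whence $C_{\overline H}(\{u,v\})$ is a clique; thus $\{u,v\}$ is dense and Lemma~\ref{dense_thickening2} produces a Tihany brace in $G$. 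If instead $|\hat X_2|=1$, the only non-edges inside $C_{\overline H}(\{u,v\})$ form a perfect antimatching between $R_1$ and $L_3$; any such non-adjacent pair $\{x,y\}$ with $x\in R_1$ and $y\in L_3$ is complete to the rest of $C_{\overline H}(\{u,v\})$, so applying Lemma~\ref{lemma:goodclique} with $K = \{u,v\}$ and the non-adjacent pair inside $C(K)$ yields a Tihany clique of size $3$, i.e.\ a Tihany triangle in $G$.

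The main obstacle is the bookkeeping around the ``fuzzy'' set $F$: to invoke Lemma~\ref{dense_thickening2} and Lemma~\ref{lemma:goodclique} I must check that no changeable pair of $F$ interferes with the brace $\{u,v\}$ or with the relevant vertices of $C_{\overline H}(\{u,v\})$. This follows from the fact, noted at the start of the section, that vertices in the strong core of $H$ never appear in a pair of $F$, together with the choice of $u,v$ in the ``hat'' sets (which sit in the strong core of a cycle of triangles) and the analogous placement of the pair chosen in $R_1\cup L_3$. Once these hypotheses are verified, both lemmas apply directly and deliver the required Tihany clique.
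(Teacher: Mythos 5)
Your proposal matches the paper's proof essentially step for step: choose $u\in\hat X_{2n}$ and $v\in\hat X_4$ after relabelling so both hat sets are singletons, compute $C_{\overline H}(\{u,v\})$, then split on $|\hat X_2|$, invoking Lemma~\ref{dense_thickening2} when $R_1=L_3=\emptyset$ and Lemma~\ref{lemma:goodclique} when only the perfect antimatching between $R_1$ and $L_3$ remains. The extra care you take about changeable pairs in $F$ and the strong core is a point the paper leaves implicit, but it does not change the argument.
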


\begin{lemma}\label{ringoffive}
Let $H$ be a ring of five graph. Let $G$ be a reduced thickening of $(\overline H,F)$ for some valid $F\in V(G)^2$ such that \co. Then there is a Tihany triangle in $G$.
\begin{proof}
Let $a_2,b_3,a_4$ be as in the definition of a ring of five. $C(\{a_2,b_3,a_4\})=V_2\cup V_4$ and thus $\{a_2,b_3,a_4\}$ is a dense triangle. By the definitions of $H$ and $F$, it follows that $\{a_2,b_3,a_4\}\cap E=\emptyset$ for all $E\in F$. Hence by~\ref{dense_thickening2}, there exists a Tihany triangle in $G$. This proves~\ref{ringoffive}.
\end{proof}
\end{lemma}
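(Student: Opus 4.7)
The plan is to mirror the strategy that worked for Lemma~\ref{cycleoftriangles}: locate a dense clique of the desired size inside the underlying graph $H$ whose vertices are disjoint from $\bigcup F$, and then invoke Lemma~\ref{dense_thickening2}. Since the conclusion asks for a Tihany triangle in $G$, the concrete target is a dense triangle $T \subseteq V(H)$ with $T \cap E = \emptyset$ for every $E \in F$.

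First I would unpack the definition of a ring of five graph from Appendix~\ref{app:orientable}, which partitions $V(H)$ into five cliques $V_1,\ldots,V_5$ together with distinguished vertices $a_i,b_i \in V_i$ and prescribed adjacencies between consecutive blocks. The natural candidate is $T := \{a_2,b_3,a_4\}$, picked symmetrically around $V_3$. From the ring-of-five adjacency rules this triple is a triangle of $H$, and a vertex can be simultaneously adjacent to all three members of $T$ only if it lies in $V_2 \cup V_4$; conversely, the definition forces every vertex of $V_2 \cup V_4$ to be a common neighbor of $T$. So $C_H(T) = V_2 \cup V_4$. Since $V_2$ and $V_4$ are each cliques and $b_3 \in V_3$ is complete to both, any non-edge inside $V_2 \cup V_4$ would, together with $b_3$, give a triad forbidden by prismaticity; hence $C_H(T)$ is itself a clique and $T$ is dense in $H$.

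Next I would verify that $T$ meets no pair of $F$. As recalled in Section~\ref{sec:orientable}, whenever $H$ is a thickening of $(G,F)$ and $\{x,y\} \in F$, both $x$ and $y$ lie outside the strong core of $H$, i.e.\ at least one of them is \emph{weak} (contained in a single triangle). Each of $a_2,b_3,a_4$ is easily seen to sit in several triangles of $H$ spanning three consecutive blocks $V_i \cup V_{i+1} \cup V_{i+2}$, so none of these vertices is weak, and consequently none lies in any $E \in F$. The same argument applied inside $C_H(T) = V_2 \cup V_4$ (whose vertices are likewise in many triangles of $H$) gives that no pair of $F$ is contained in $C_H(T)$. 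Thus both hypotheses of Lemma~\ref{dense_thickening2} are satisfied by the clique $T$, and that lemma delivers a Tihany triangle of $G$.

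The main (and essentially only) obstacle is the bookkeeping with the ring-of-five definition: one must extract from the appendix that $C_H(\{a_2,b_3,a_4\})$ is exactly $V_2 \cup V_4$, that this union is a clique, and that all the vertices involved are in the strong core so as to avoid $F$. Once these structural facts are in hand, the invocation of Lemma~\ref{dense_thickening2} is immediate and the proof is complete.
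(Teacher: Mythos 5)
Your proposal follows the paper's route exactly: the candidate triangle is $T=\{a_2,b_3,a_4\}$, one computes $C_{\overline H}(T)=V_2\cup V_4$, observes this is a clique (so $T$ is dense), checks that $F$ does not interfere, and invokes Lemma~\ref{dense_thickening2}. Two remarks on the details.

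First, a recurring slip: the sets $V_0,\ldots,V_5$ are \emph{stable} sets of the prismatic ring-of-five graph $H$ (so cliques of $\overline H$), and $\{a_2,b_3,a_4\}$ is a \emph{triad} of $H$, hence a triangle of $\overline H$. All the density and common-neighbourhood computations take place in $\overline H$, and the fact that $V_2\cup V_4$ is a clique of $\overline H$ is immediate from the stated rule ``$V_i$ is anticomplete to $V_{i+2}$'' in $H$ — no prismaticity argument is needed (and the one you sketch, ``a non-edge plus $b_3$ gives a forbidden triad,'' does not parse, since two nonadjacent vertices with a common neighbour form a path, not a triad).

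Second, and this is the real gap: your argument that no pair of $F$ lies inside $C(T)=V_2\cup V_4$ is incorrect. You claim the vertices of $V_2\cup V_4$ ``are likewise in many triangles of $H$'' and hence in the strong core, but the $V_i$ are stable sets of $H$, pairwise anticomplete except to neighbouring $V_{i\pm1}$, and their $W$-neighbourhoods are stable; a vertex of $V_2$ or $V_4$ lies in \emph{no} triangle of $H$ at all. The strong-core criterion therefore tells you that such vertices are \emph{eligible} to appear in $F$, not that they are excluded, so it cannot be used to verify the hypothesis ``for all $x,y\in C(K)$, $\{x,y\}\not\in F$'' of Lemma~\ref{dense_thickening2}. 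Ruling out such $F$-pairs actually requires the explicit description of the changeable edges of a ring of five from Chudnovsky--Seymour (this is what the paper's ``By the definitions of $H$ and $F$'' is pointing to). Your strong-core argument does correctly handle the part you really need for $S_F(T)=\emptyset$, namely that none of $a_2,b_3,a_4$ lies in a pair of $F$, but the extension of that argument to $V_2\cup V_4$ does not work.
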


\begin{lemma}\label{mantled}
Let $H$ be a mantled $L(K_{3,3})$ and $G$ be a reduced thickening of $(\overline H,F)$ for some valid $F\in V(G)^2$. If \co, then there exists a Tihany brace in $G$.
\begin{proof}
Let $W,a_j^i,V^i,V_i$ be as in the definition of mantled $L(K_{3,3})$. Let $X_j^i$ be the clique corresponding to $a_j^i$ in the thickening and $\mathcal{W}$ (resp. $\mathcal{V}_i$, $\mathcal{V}^i$) be the set of vertices corresponding to $W$ (resp. $V_i$, $V^i$) in the thickening. Let $x_i^j \in X_i^j$, $\mathcal{V}=\cup_{i=1}^3 \mathcal{V}_i\cup\mathcal{V}^i$  and $k=\chi(G)$.

For a brace $E$ in $G$, let $M_W(E):=M(E)\cap \mathcal{W}$, $M_V(E):=M(E)\cap \mathcal{V}$, $A_W(E):=A(E)\cap \mathcal{W}$ and $A_V(E):=A(E)\cap \mathcal{V}$. Let $E=\{x_i^j,x_{i'}^{j'}\}$ and let $S$ be a color class in a (k-2)-colouring of $G\backslash E$.
\begin{claim}\label{claim1mantel}
If $S\cap A_V(E)\neq \emptyset$, then $|S|\leq 2$. 
\end{claim}
Assume not. Let $S=\{x,y,z\}$ and without loss of generality we may assume that $E=\{x_1^1,x_2^1\}$ and $x\in A_V(E)=\mathcal{V}^1$.  Since $x$ is complete to $\mathcal{V}_1\cup \mathcal{V}_2 \cup \mathcal{V}_3$ and $X_i^j$, for $i=1,2,3\ j=2,3$, we deduce that $y,z\notin \mathcal{V}_1\cup \mathcal{V}_2 \cup \mathcal{V}_3$ and $y,z\notin X_i^j$, for $i=1,2,3\ j=2,3$. Since there is no triad in $\mathcal{V}^1\cup \mathcal{V}^2 \cup\mathcal{ V}^3$, it follows that $|\{y,z\}\cap ( \mathcal{V}^1\cup \mathcal{V}^2 \cup\mathcal{ V}^3)|\leq 1$. Since $X_1^1\cup X_2^1\cup X_3^1$ is a clique, we deduce that $|\{y,z\}\cap (X_1^1\cup X_2^1\cup X_3^1)|\leq 1$. Hence, we may assume by symmetry that $y\in X_1^1\cup X_2^1\cup X_3^1$ and $z\in \mathcal{V}^2\cup \mathcal{V}^3$. But $X_1^1\cup X_2^1\cup X_3^1$ is complete to $\mathcal{V}^2\cup \mathcal{V}^3$, a contradiction. This proves~\cref{claim1mantel}.

\begin{claim}\label{claim2mantel}
If $S\cap M_V(E)\neq \emptyset$, then $|S|\leq 2$. 
\end{claim}
Assume not. Let $S=\{x,y,z\}$ and without loss of generality we may assume that $E=\{x_1^1,x_2^1\}$ and $x\in \mathcal{V}_1$. Since $x$ is complete to $\mathcal{V}^1\cup \mathcal{V}^2 \cup \mathcal{V}^3$ and $X_2^j\cup X_3^j$, for $j=1,2,3$, we deduce that $y,z\notin \mathcal{V}^1\cup \mathcal{V}^2 \cup \mathcal{V}^3$ and $y,z\notin X_2^j\cup X_3^j$, for $j=1,2,3$. Since there is no triad in $\mathcal{V}_2\cup \mathcal{V}_3$, it follows that $|\{y,z\}\cap \mathcal{V}_2\cup \mathcal{V}_3|\leq 1$. As $X_1^1\cup X_1^2\cup X_1^3$ is a clique, we deduce that $|\{y,z\}\cap (X_1^1\cup X_1^2\cup X_1^3)|\leq 1$. Hence we may assume by symmetry that $y\in \mathcal{V}_2\cup \mathcal{V}_3$ and $z\in X_1^1\cup X_1^2\cup X_1^3$. But $\mathcal{V}_2\cup \mathcal{V}_3$ is complete to $X_1^1\cup X_1^2\cup X_1^3$, a contradiction. This proves~\cref{claim2mantel}.\\

By~\ref{basic}, every color class of a $(k-2)$-coloring of $G\backslash E$ must have a vertex in $C(E)$. By~\cref{claim1mantel} and~\cref{claim2mantel}, it follows that color classes with vertices in $A_V(E)\cup M_V(E)$ have size 2. Hence we deduce that $A_V(E)+M_V(E)+\frac{1}{2}A_W(E)+\frac{1}{2}M_W(E) \leq C(E)-2$.
Summing this inequality on all baces $E=\{x_i^j,x_{i'}^{j'}\}\ i,j=1,2,3$, it follows that
$$3\sum_i(|\mathcal{V}_i|+|\mathcal{V}^i|)+ 6\sum_i(|\mathcal{V}_i|+|\mathcal{V}^i|) + \frac{4}{2}\sum_{i,j} |X_i^j| + \frac{8}{2}\sum_{i,j} |X_i^j|<9\sum_i(|\mathcal{V}_i|+|\mathcal{V}^i|) + 6\sum_{i,j} |X_i^j|,$$
which is a contradiction. This proves~\ref{mantled}.
\end{proof}
\end{lemma}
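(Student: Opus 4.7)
The plan is to proceed by contradiction, assuming no brace in $G$ is Tihany, and to derive a numerical contradiction by summing a local inequality over a large family of braces. Since $\overline{H}$ is antiprismatic it is claw-free with $\alpha(\overline{H})\leq 3$, so the reduced thickening $G$ is claw-free with $\alpha(G)\leq 3$, and in particular every color class has size at most three. Fix a brace $E=\{x_i^j,x_{i'}^{j'}\}$ with $(i,j)\neq(i',j')$, set $k=\chi(G)$, and let $\mathcal{S}$ be a $(k-2)$-coloring of $G\setminus E$ (which exists because $E$ is assumed not Tihany). By Lemma~\ref{basic}, every color class of $\mathcal{S}$ contains a vertex of $C(E)$.

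The first key step will be two structural claims restricting the sizes of color classes that meet the ``axial'' mantle: if a color class $S$ meets $A_V(E)\cup M_V(E)$, then $|S|\leq 2$. The argument exploits the defining features of a mantled $L(K_{3,3})$---each $\mathcal{V}^i$ is complete to every cell-clique $X_k^\ell$ outside row $i$, each $\mathcal{V}_i$ is complete to every cell-clique outside column $i$, the row and column cliques of the $L(K_{3,3})$ skeleton are complete to the appropriate mantle parts, and there is no triad in $\mathcal{V}^1\cup\mathcal{V}^2\cup\mathcal{V}^3$ or in $\mathcal{V}_1\cup\mathcal{V}_2\cup\mathcal{V}_3$. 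A short case analysis on where a hypothetical third vertex of $S$ could sit will yield a contradiction with one of these completeness or triad-freeness properties, handled separately depending on whether the initial vertex of $S$ lies in some $\mathcal{V}^i$ or in some $\mathcal{V}_i$.

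With the two claims in hand, since each of the $k-2$ color classes uses a distinct vertex of $C(E)$, each class meeting $A_V(E)\cup M_V(E)$ has size at most $2$ and so contributes at most one non-$C(E)$ vertex, while the remaining classes contribute at most two non-$C(E)$ vertices. Combining this bookkeeping with the fact that the two vertices of $E$ are missing from $G\setminus E$ gives the local inequality
$$|A_V(E)|+|M_V(E)|+\tfrac{1}{2}\bigl(|A_W(E)|+|M_W(E)|\bigr)\;\leq\;|C(E)|-2.$$
The conclusion will follow by summing this inequality over all braces $\{x_i^j,x_{i'}^{j'}\}$ with $i,j\in\{1,2,3\}$; in the summed inequality each vertex of $\mathcal{V}_i$, $\mathcal{V}^i$, and $\mathcal{W}$ appears on the left and on the right with multiplicities that can be read off from the $L(K_{3,3})$ adjacency pattern, and the global LHS will strictly exceed the global RHS, producing the desired contradiction. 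The main obstacle will be the clean verification of the two size-at-most-two claims, since each requires locating a hypothetical third stable-set vertex and ruling it out using the precise row/column completeness structure of the mantle; by comparison, once these claims are established the global summation is routine provided one is careful about the multiplicities with which each mantle vertex is counted.
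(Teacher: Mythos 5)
Your proposal follows essentially the same route as the paper: contradiction via assuming no brace is Tihany, the same two claims bounding the size of color classes that meet $A_V(E)\cup M_V(E)$, the same local inequality $|A_V(E)|+|M_V(E)|+\tfrac12(|A_W(E)|+|M_W(E)|)\le |C(E)|-2$, and the same summation over the braces $\{x_i^j,x_{i'}^{j'}\}$ of the $L(K_{3,3})$ skeleton. The only place where your sketch and the paper are both terse is the justification of the additive $-2$ term (which is essential for the strict inequality after summation), but since you arrive at the identical inequality by the identical bookkeeping, there is no substantive divergence to report.
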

\begin{lemma}\label{path_of_triangle}
Let $(H,H_1,H_2,H_3)^c$ be a path of triangle and $(I,I_1,I_2,I_3)$ an antiprismatic three-cliqued graph. Let $G$ be a worn hex-join of $(H,H_1,H_2,H_3)$ and $(I,I_1,I_2,I_3)$, and $G'$ be a reduced thickening of $(G,F)$ for some valid $F\in V(G)^2$ such that $\chi(G')>\omega(G')$. Then there exists a Tihany clique $K$ in $G'$, with $|K|\leq 4$.
\begin{proof}
Assume not. Let the set $X_j$ of $H$  be as in the definition of a path of triangle and we may assume that $H_i=\cup_{j=i\mod 3}X_j$.

Assume first that $|\hat X_{2i}|>1$ for some $i$. Let $u\in X_{2i-2}$ and $v\in X_{2i+2}$, so $uv$ is an edge in $G$. Moreover $\{u,v\}$ is in the strong core. Thus 
$$C_G(\{u,v\})=\bigcup_{\footnotesize \begin{array}{c}j=2i+2 \mod 3,\\\ j\geq 2i+2\end{array}} X_j\ \cup \bigcup_{\footnotesize \begin{array}{c}j=2i-2 \mod 3,\\j\leq 2i-2\end{array}}X_j\cup I_k$$ 

\noindent for $k=2i+1\mod 3$. Hence $C_G(\{u,v\})$ is a clique and so by~\ref{dense_thickening2}, there is a Tihany brace in $G'$, a contradiction. Hence we may assume that $|\hat X_{2i}|=1\ \forall i$. 

Assume that $n\geq 3$ and let $u\in \hat X_2,v\in \hat X_6$.  Then $uv$ is an edge in $G$. Moreover $\{u,v\}$ is in the strong core. Thus

$$C_G(\{u,v\})=\bigcup_{j=0 \mod 3, j\geq 6}X_{j}\cup X_2 \cup R_3 \cup L_5\cup H_3.$$

Hence $C_G(\{u,v\})$ is an antimatching, and by~\ref{lemma:goodclique}, there exists a Tihany triangle in $G'$, a contradiction. It follows that $n\leq 2$.  

Assume now that $n=2$. Let $u\in \hat X_2, v\in L_5$.  Then $uv$ is an edge in $G$ and $C_G(\{u,v\})= X_2\cup R_3\cup L_5\cup H_3$. Thus $G|C(\{u,v\})$ is a perfect anti-matching between $R_3$ and $L_5$. Hence by~\ref{lemma:goodclique}, there is a Tihany triangle in $G'$, a contradiction.

Thus we deduce that $n=1$. Assume that $|R_1|=|L_3|=1$. Let $u\in X_2$ and $v\in R_1\cup L_3$ be a neighbor of $v$. Without loss of generality, we may assume that $v\in L_3$. Since $C_G(\{u,v\})\subseteq X_2\cup L_3\cup H_3$ is a clique, it follows by~\ref{dense_thickening2} that there is a Tihany brace in $G'$, a contradiction. Hence we deduce that $|R_1|=|L_3|>1$. Now, let $u\in R_1$ and $v\in L_3$ be adjacent. By~\ref{non2sub}, we may assume that $G$ is not a 2-non-substantial graph. If follows that there exists $x\in I_2$ such that $x$ is in a triad. Thus $C_G(\{u,v,x\})$ is an antimatching, and by~\ref{lemma:goodclique}, there exists a Tihany clique $K$ in $G'$ with $|K|\leq 4$, a contradiction. This proves~\ref{path_of_triangle}.
\end{proof}
\end{lemma}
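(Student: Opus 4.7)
The plan is to argue by contradiction: assume that $G'$ contains no Tihany clique of size at most four. The strategy is to locate, for each possible configuration of the path-of-triangles $H$, a small clique $K$ of $G$ lying in the strong core of $H$, for which either $C_G(K)$ is a clique and Lemma~\ref{dense_thickening2} applies (producing a Tihany clique of size $|K|$), or $C_G(K)$ is an antimatching and Lemma~\ref{lemma:goodclique} applies (producing a Tihany clique of size $|K|+1$). Because $K$ is in the strong core, the result cited at the start of Section~\ref{sec:orientable} ensures that no endpoint of a pair in $F$ lies in $K$, so the hypotheses on $F$ in these two lemmas are automatically satisfied. The central structural observation driving the computation is that under the worn hex-join, the common neighborhood in $G$ of a brace $\{u,v\}\subseteq H_i\cup H_j$ (with $i\neq j$) picks up exactly one of the cliques $I_k$ from the $I$ side, while the path-of-triangles side contributes only a handful of neighboring sets $X_j$ together with possibly the boundary sets $R_1,L_{2n+1}$.

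I would then split into cases on the sizes $|\hat X_{2i}|$ and on $n$. If $|\hat X_{2i}|>1$ for some $i$, then choosing $u\in X_{2i-2}$ and $v\in X_{2i+2}$ yields a strong-core brace whose common neighborhood is a disjoint union of consecutive $X_j$'s together with an appropriate $I_k$, hence a clique; Lemma~\ref{dense_thickening2} delivers a Tihany brace. If instead $|\hat X_{2i}|=1$ for every $i$, I further split on $n$: for $n\geq 3$, the brace $\{u,v\}$ with $u\in \hat X_2$ and $v\in \hat X_6$ has a common neighborhood whose non-edges form a single matching between the boundary sets $R_3$ and $L_5$, and Lemma~\ref{lemma:goodclique} produces a Tihany triangle; the case $n=2$ is handled analogously with $v\in L_5$ in place of $v\in\hat X_6$.

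The remaining case $n=1$ needs one extra ingredient. If $|R_1|=|L_3|=1$, an edge between $\hat X_2$ and $L_3$ has common neighborhood contained in $X_2\cup L_3\cup H_3$, which is a clique, so Lemma~\ref{dense_thickening2} applies. If $|R_1|,|L_3|>1$, the plan is to first invoke Lemma~\ref{non2sub} to reduce to the subcase that $G$ is $2$-substantial; this supplies a vertex $x\in I_2$ belonging to some triad of $G$. Taking an edge $uv$ between $R_1$ and $L_3$ and forming the triangle $\{u,v,x\}$ yields a common neighborhood that is an antimatching, and Lemma~\ref{lemma:goodclique} then produces a Tihany clique of size four. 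The hardest part will be the careful structural computation of $C_G(K)$ in each subcase, in particular verifying that the only non-edges present are exactly the left-right matching between boundary sets so that the antimatching hypothesis of \ref{lemma:goodclique} is met; once this bookkeeping is done, the proof requires no new tools beyond the reductions of Section~5 and the basic machinery from Section~3.
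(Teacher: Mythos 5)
Your proposal is correct and follows essentially the same route as the paper: the same case split on $|\hat X_{2i}|$ and on $n\in\{1,2,\geq 3\}$, the same choice of braces/triangles in the strong core in each case, and the same reductions via Lemmas~\ref{dense_thickening2}, \ref{lemma:goodclique}, and \ref{non2sub}. The only work remaining would be carrying out the bookkeeping of $C_G(K)$ in each case, which you correctly identify as the substance of the proof.
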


\begin{theorem}\label{lk33}
Let $(G,A,B,C)$ be an antiprismatic graph that admit a worn chain decomposition $(G_i,A_i,B_i,C_i)$. Suppose that there exists $k$ such that $(G_k,A_k,B_k,C_k)$ is the line graph of $K_{3,3}$. Let $G'$ be a reduced thickening of $(G,F)$ for some valid $F\in V(G)^2$. If $\chi(G')>\omega(G')$, then there is a Tihany brace in $G'$.
\begin{proof}
Assume not. Let $\{a_j^i\}_{i,j=1,2,3}$ be the vertices of $G_k$ using the standard notation. Let $X_j^i=X_{a_j^i}$ be the clique corresponding to $a_j^i$ in the thickening. Moreover, let $x_i^j \in X_i^j$, $w_i^j=|X_i^j|$ and $k=\chi(G)$.

Since all of the vertices in the thickening of $G_k$ are in triads, $G_k$ is linked to the rest of the graph by a hex-join.

Note that $G\backslash\{x_1^1,x_2^1\}$ is $k$-2 colourable. By~\ref{basic}, it follows that every color class containing a vertex in $X_1^2\cup X_1^3$ must have a vertex in $X_2^1\cup X_3^1$. Hence we deduce that $w_1^2+w_1^3\leq w_2^1+w_3^1-1$ and by symmetry $w_2^2+w_2^3\leq w_1^1+w_3^1-1$. Summing these two inequalities, it follows that  
$$w_1^2+w_1^3+w_2^2+w_2^3< w_2^1+w_1^1+2 w_3^1. $$
A similar inequality can be obtained for all edges $x_i^jx_{i'}^j$. Summing them all, we deduce that $4\sum_{ij} w_i^j < 2\sum_{ij} w_i^j+ 2\sum_{ij} w_i^j,$ a contradiction. This proves~\ref{lk33}
\end{proof}
\end{theorem}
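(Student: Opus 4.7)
The plan is to argue by contradiction, assuming $G'$ has no Tihany brace and deriving a purely arithmetic impossibility on the thickening weights $w_i^j=|X_i^j|$. The first step is to observe that every vertex of $G_k=L(K_{3,3})$ lies in a triad of $G$: for instance $\{a_1^1,a_2^2,a_3^3\}$ is one, since its members pairwise disagree in both indices. By the definition of a worn hex-chain, this rules out any ``worn'' edges between $G_k$ and the other strips $G_i$, so $G_k$ is glued to $V(G)\setminus V(G_k)$ by a clean hex-join in which $A_k,B_k,C_k$ satisfy the full three-way completeness relations.

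For a representative brace $E=\{x_1^1,x_2^1\}\subset A_k$ (an edge inside one clique of $L(K_{3,3})$), the assumption that $E$ is not Tihany provides a $(\chi-2)$-coloring of $G'\setminus E$, and by~\ref{basic} every color class meets a common neighbor of $E$. The heart of the argument is to concentrate on the color classes containing a vertex of the ``column mates'' $X_1^2\cup X_1^3$ (the two thickening cliques with subscript $1$ outside $A_k$); these form a clique of size $w_1^2+w_1^3$, so exactly this many color classes are involved. For each such class, the required common neighbor must be non-adjacent to its column-mate vertex $v$, and using the clean hex-join completeness pattern together with the fact that $v$ lies in a triad (ruling out worn edges), one checks that the only admissible common neighbors lie in $(X_2^1\setminus\{x_2^1\})\cup X_3^1\subset A_k$, a clique of size $w_2^1+w_3^1-1$. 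This gives the strict inequality
\[
w_1^2+w_1^3 \;\le\; w_2^1+w_3^1-1,
\]
and exchanging the roles of $x_1^1$ and $x_2^1$ in $E$ yields the companion $w_2^2+w_2^3\le w_1^1+w_3^1-1$.

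Running the same procedure for each of the nine edges lying inside some clique of $L(K_{3,3})$ (three edges per clique) and summing the resulting pairs of inequalities, the $3\times 3$ symmetry forces every $w_i^j$ to appear with coefficient $4$ on both sides. The accumulated strict slack then yields $4\sum_{i,j}w_i^j<4\sum_{i,j}w_i^j$, the desired contradiction. The main obstacle is the middle step: ruling out that a color class containing a column mate $v\in B_k\cup C_k$ could use a hex-join common neighbor sitting in a neighboring strip $G_i$. This is precisely where the triad structure of $G_k$-vertices and the resulting cleanliness of the hex-join become indispensable, since the full completeness relations force any such hex-join common neighbor of $E$ to also be adjacent to $v$. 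Once the confinement of the common neighbor to $A_k$ is justified, the global summation is bookkeeping.
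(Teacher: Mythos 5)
Your overall plan matches the paper's: count color classes around a brace of the $L(K_{3,3})$ term, obtain a linear inequality in the thickening weights $w_i^j$, and contradict by symmetric summation. The crucial step, however, is where you confine the admissible common neighbors to $A_k$, and as you have set it up this confinement is false.

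You take $E=\{x_1^1,x_2^1\}\subset A_k$, i.e.\ you index the three cliques $A_k,B_k,C_k$ by superscript, so the brace lies inside a single clique of the three-cliquing. Then for a column mate $v\in X_1^2\subset B_k$, a vertex $u\in A_m$ with $m<k$ is complete to $A_k$ (in particular to $E$) and, since all vertices of $G_k$ lie in triads, \emph{anticomplete} to $B_k$ and hence non-adjacent to $v$; symmetrically $u\in C_m$ with $m>k$ has the same property. Such a $u$ is an admissible common neighbor outside $G_k$, so your claim that ``the full completeness relations force any such hex-join common neighbor of $E$ to also be adjacent to $v$'' is exactly backwards: cleanliness of the hex-join is what \emph{guarantees} the anticompleteness of $A_m$ to $B_k$, creating the problematic candidates. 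With these extra candidates the inequality $w_1^2+w_1^3\le w_2^1+w_3^1-1$ no longer follows.

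What makes the paper's argument work (though it is left implicit there) is that the three cliques must be taken by subscript, so that $E=\{x_1^1,x_2^1\}$ \emph{spans two} of the three cliques, $A_k$ and $B_k$. Then a hex-join common neighbor of $E$ must be complete to both $A_k$ and $B_k$; by the worn-hex-join relations this forces $u\in A_m$ for $m>k$ and $u\in B_m$ for $m<k$, and both of these classes are complete to $A_k\cup B_k$. Since the column mates $X_1^2\cup X_1^3$ all have subscript $1$ and hence lie in $A_k$, every hex-join candidate is adjacent to them and is therefore excluded. So the confinement to $(X_2^1\cup X_3^1)\setminus\{x_2^1\}$ becomes correct, and the rest of your summation bookkeeping goes through unchanged. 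The fix is small but essential: the brace must be chosen across two cliques, not inside one.
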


\begin{lemma}\label{3cliques_antiprism}
Let $H$ be a 3-coloured prismatic graph. Let $G$ be a reduced thickening of $(\overline H,F)$ for some valid $F\in V(G)^2$ such that \co. Then there exists a Tihany brace or triangle in $G$.
\begin{proof}
By~\ref{decomp_3cliques}, $H$ admits a worn chain decomposition with all terms in $\mathcal{Q}_0\cup \mathcal{Q}_1\cup \mathcal{Q}_2$. If one term of the decomposition is in $\mathcal{Q}_2$ then by~\ref{path_of_triangle}, it follows that there is a Tihany clique $K$ with $|K|\leq 4$ $G$. If one term of the decomposition is in $\mathcal{Q}_1$, then by~\ref{lk33}, it follows that there is a Tihany brace in $G$. Hence we may assume that all terms are in $\mathcal{Q}_0$. Therefore there are no triads in $G$ and thus by~\ref{alpha2}, it follows that there is a Tihany brace in $G$. This proves~\ref{3cliques_antiprism}. 
\end{proof}
\end{lemma}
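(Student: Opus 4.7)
The plan is to apply the structural decomposition of \ref{decomp_3cliques} to $H$: since $H$ is 3-coloured prismatic, it admits a worn chain decomposition $(H_i, A_i, B_i, C_i)$ with every term $(H_i,A_i,B_i,C_i)$ lying in $\mathcal{Q}_0 \cup \mathcal{Q}_1 \cup \mathcal{Q}_2$. This reduces the problem to a case analysis on which class appears among the terms, where each case has already been handled by a previous lemma.

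First I would dispose of the case where some term $(H_k, A_k, B_k, C_k)$ is in $\mathcal{Q}_2$, i.e.\ the path-of-triangle class. The definition of a worn chain decomposition lets me group the terms before and after the $k$-th one into a single three-cliqued antiprismatic graph (by iterating the worn hex-join construction), so that $\overline H$ can be written as a worn hex-join of the path-of-triangle piece and one antiprismatic three-cliqued remainder. Then \ref{path_of_triangle}, applied to the reduced thickening $G$, produces a Tihany clique of size at most $4$, which suffices.

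Next, if some term $(H_k, A_k, B_k, C_k)$ is in $\mathcal{Q}_1$, i.e.\ the line graph of $K_{3,3}$, then the same grouping argument presents $\overline H$ in a form to which \ref{lk33} applies, yielding a Tihany brace in $G$. The remaining case is that every term of the decomposition is in $\mathcal{Q}_0$. By the definition of $\mathcal{Q}_0$, no term contains a triad, and since any triad of $\overline H$ is a triad of one of the terms (a feature of the worn hex-chain construction that preserves the absence of triads across the join), $\overline H$ itself is triad-free; consequently $G$ is triad-free, i.e.\ $\alpha(G) \leq 2$. Then \ref{alpha2} directly yields the desired partition and in particular a Tihany brace.

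The only step that requires any real care is verifying that in each of the first two cases one may legitimately collapse ``all terms other than the $k$-th'' into a single antiprismatic three-cliqued graph so that the two-term hypothesis of \ref{path_of_triangle} or \ref{lk33} applies. This uses the associativity of the worn hex-join and the fact that a worn hex-chain of antiprismatic three-cliqued graphs is itself antiprismatic three-cliqued; once this is observed the three cases slot directly into the corresponding previously proved lemmas, so no new combinatorial argument is needed here.
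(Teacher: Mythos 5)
Your proposal is correct and follows the same three-case analysis as the paper's proof: invoke \ref{decomp_3cliques} to obtain a worn chain decomposition with terms in $\mathcal{Q}_0\cup\mathcal{Q}_1\cup\mathcal{Q}_2$, then dispatch each case to \ref{path_of_triangle}, \ref{lk33}, and \ref{alpha2} respectively. Two small remarks on your ``grouping'' step. For the $\mathcal{Q}_1$ case it is superfluous, since \ref{lk33} is already stated for an arbitrary-length worn chain decomposition and applies directly without collapsing anything. For the $\mathcal{Q}_2$ case a two-term presentation is indeed needed for \ref{path_of_triangle}, but one cannot directly merge ``the terms before and after the $k$-th'' into a single term of a linear hex-chain, because a term preceding $G_k$ and a term following $G_k$ attach to $G_k$ with opposite orientations of the $A/B/C$ pattern and would violate the join rule if placed on the same side; one must first use the cyclic re-orderability of worn hex-chains (moving the last term to the front with the corresponding cyclic permutation of the three cliques) to position the $\mathcal{Q}_2$ term at an end, and only then collapse the remaining consecutive terms into one antiprismatic three-cliqued graph. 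The paper's own proof is silent on this point, so your instinct to flag it is sound even though the justification needs this extra step.
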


We can now prove the main result of this section. 

\begin{lemma}\label{orientable}
Let $H$ be an orientable prismatic graph. Let $G$ be a reduced thickening of $(\overline H,F)$ for some valid $F\subseteq V(G)^2$ such that \co. Then there exists a Tihany clique $K$ in $G$ with $|K|\leq 4$.
\begin{proof}
If $H$ admits a worn chain decomposition with all terms in $\mathcal{Q}_0\cup \mathcal{Q}_1\cup \mathcal{Q}_2$, then by~\ref{3cliques_antiprism}, $G$ admits a Tihany brace or triangle.  Otherwise, by~\ref{decomp_orientable_prism}, $H$ is either not 3-substantial, a cycle of triangles, a ring of five graph, or a mantled $L(K_{3,3})$. 

If $H$ is not 3-substantial, then by~\ref{theorem:not3substantial}, there is a clique $K$ in $G$ with $|K|\leq 4$. If $H$ is a cycle of triangles, then by~\ref{cycleoftriangles}, there is a Tihany brace or triangle in $G$. If $H$ is a ring of five graph, then by~\ref{ringoffive}, there is a Tihany triangle in $G$. Finally, if $H$ is a mantled $L(K_{3,3})$, then by~\ref{mantled}, there is a Tihany brace in $G$. This proves~\ref{orientable}.
\end{proof}
\end{lemma}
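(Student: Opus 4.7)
The plan is to reduce the lemma to a direct case analysis on the structural classification of orientable prismatic graphs, with each case resolved by invoking one of the specialised lemmas proved earlier in this section. The heavy combinatorial content has already been absorbed into those lemmas, so the argument is short once the right split is made.

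First I would handle the case in which $H$ is 3-coloured, that is, $H^c$ admits a three-cliqued structure. In this subcase Lemma~\ref{3cliques_antiprism} applies verbatim and produces a Tihany brace or triangle in $G$, each of which qualifies as a Tihany clique of size at most $4$. Equivalently, via Lemma~\ref{decomp_3cliques}, this is the situation in which $H$ admits a worn chain decomposition with all terms in $\mathcal{Q}_0\cup \mathcal{Q}_1\cup \mathcal{Q}_2$, so invoking \ref{3cliques_antiprism} is legitimate.

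Otherwise $H$ is orientable prismatic but not 3-colourable, and Lemma~\ref{decomp_orientable_prism} then asserts that $H$ falls into one of four structural classes: non-3-substantial, a cycle of triangles graph, a ring of five graph, or a mantled $L(K_{3,3})$. In each subcase I would invoke the matching dedicated result: Lemma~\ref{mainnot3sub} in the non-3-substantial case (yielding a Tihany clique of size at most $4$), Lemma~\ref{cycleoftriangles} in the cycle of triangles case (a Tihany brace or triangle), Lemma~\ref{ringoffive} in the ring of five case (a Tihany triangle), and Lemma~\ref{mantled} in the mantled $L(K_{3,3})$ case (a Tihany brace). In every branch the produced Tihany clique has size at most $4$, which is what is required.

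The main obstacle, such as it is, amounts only to matching hypotheses: each cited lemma requires that $G$ be a reduced thickening of $(\overline H, F)$ for some valid $F\subseteq V(G)^2$ with $\chi(G) > \omega(G)$, and these are precisely the hypotheses of the present lemma. Because the structure theorem \ref{decomp_orientable_prism} exhausts all non-3-colourable orientable prismatic graphs, and the 3-colourable case is caught by \ref{3cliques_antiprism}, the case split is complete and the verification in each branch is purely mechanical.
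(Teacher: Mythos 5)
Your proposal is correct and follows essentially the same case analysis as the paper: split off the 3-colourable case via Lemma~\ref{3cliques_antiprism}, then apply Lemma~\ref{decomp_orientable_prism} and dispatch each of the four remaining classes with its dedicated lemma. In fact you are slightly more careful than the paper in the non-3-substantial case: the paper cites Lemma~\ref{theorem:not3substantial}, which only covers the situation where the two triad-hitting vertices $u,v$ are non-adjacent, whereas you correctly cite the combined corollary Lemma~\ref{mainnot3sub}, which also absorbs the adjacent case via Lemma~\ref{theorem:not3substantialadj}.
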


\section{Non-orientable Prismatic Graphs}
The definitions needed to understand this section can be found in appendix~\ref{app:non-orientable}. The following is a result from~\cite{claws2}. 

\begin{lemma}\label{twisterandrotator}
Let $G$ be prismatic. Then $G$ is orientable if and only if no induced subgraph of $G$ is a twister or rotator.
\end{lemma}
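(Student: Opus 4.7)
The plan is to prove the biconditional by handling each direction separately.

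For the forward direction (only if), I would first observe that orientability is hereditary under taking induced subgraphs: given an orientation of $G$, its restriction to the triangles present in any induced subgraph $H$ is an orientation of $H$, because every triangle of $H$ is a triangle of $G$ and the local compatibility conditions in the definition of orientation only depend on vertices of $H$. Consequently, it suffices to verify directly that neither a twister nor a rotator is orientable. Both objects are fully explicit small graphs from Appendix~\ref{app:non-orientable}, so this reduces to a finite computation: enumerate the triangles of each, write out the cyclic-order constraints that any orientation must satisfy on pairs of triangles sharing the appropriate structure, and display the resulting system as unsatisfiable. In each case the obstruction is that traversing a closed sequence of triangles forces a net inversion of the cyclic order, contradicting the constraint on the starting triangle.

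For the backward direction (if), I would assume that $G$ is prismatic and contains no induced twister or rotator, and construct an orientation explicitly. The approach is propagation. Form an auxiliary graph $\mathcal T(G)$ whose vertices are the triangles of $G$ and whose edges join two triangles whenever the definition of orientation forces their cyclic orders to be compatible. On each connected component of $\mathcal T(G)$, choose any initial triangle $T_0$, orient it arbitrarily, and extend along $\mathcal T(G)$ by forced moves. Because $G$ is prismatic, the local structure around any triangle is extremely restricted (in particular every vertex lies in a bounded number of triangles, and the union of triangles around any vertex has a rigid shape), which guarantees that the forced extension at each step is well-defined.

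The main obstacle, and the technical heart of the argument, is showing that this propagation is globally consistent, that is, that no closed walk in $\mathcal T(G)$ returns an orientation on $T_0$ different from the one we started with. I would argue by contradiction: suppose some component admits an inconsistency cycle and choose one of minimum length $\ell$. Using minimality together with the rigidity of prismatic graphs, I would bound $\ell$ by a small constant and then study the set $U$ of all vertices of $G$ incident to some triangle on the cycle. A case analysis of the possible induced subgraphs $G|U$ compatible with prismaticity and with the topological pattern of the cycle should exhaust exactly the two configurations of a twister and of a rotator, yielding the required induced subgraph and the desired contradiction. This case analysis is where the real work lies; it relies crucially on the catalogue of local patterns available to a prismatic graph, so that each minimal bad configuration is forced into one of the two named obstructions.
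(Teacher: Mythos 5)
The paper offers no proof of Lemma~\ref{twisterandrotator}; it is cited directly from \cite{claws2} with the line ``The following is a result from~\cite{claws2},'' so there is no in-paper argument to compare your sketch against. On its own merits, your forward direction is plausible in outline: orientability (an assignment of cyclic orders to triangles subject to local compatibility constraints) is hereditary under taking induced subgraphs, because the triangles and constraints of an induced subgraph form a subset of those of the ambient graph, and ruling out an orientation of the two explicit small graphs is a finite check. You have not carried out that check, but it is routine.

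The backward direction is where the theorem's content resides, and your sketch has a genuine gap there. You propose to propagate an orientation along an auxiliary triangle graph $\mathcal T(G)$ and to derive a contradiction from a minimal inconsistency cycle, but you assert without argument that a minimal inconsistency cycle has length bounded by a small constant; this is not obvious, since $\mathcal T(G)$ can have long cycles in a large prismatic graph, and you would need some shortcutting lemma (a long inconsistency cycle contains a short one) before any finite analysis can even start. More importantly, the concluding sentence that a ``case analysis should exhaust exactly the two configurations of a twister and of a rotator'' is a restatement of the theorem, not a proof of it: the whole difficulty is to show that prismaticity forces every minimal obstruction into one of those two shapes, and nothing in the sketch identifies the mechanism that does the forcing. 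Note also that in \cite{claws2} this characterization is not obtained by a minimal-counterexample propagation argument but as a corollary of a full structure theorem for orientable prismatic graphs, so your route is genuinely different in strategy from the source; it may well be viable, but as written it stops exactly where the real work begins.
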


In the following two lemmas, we study complements of orientable prismatic graphs.  We split our analysis based on whether the graph contains a twister or a rotator as an induced subgraph. 

\begin{lemma}\label{rotator}
Let $H$ be an non-orientable prismatic graph. Assume that there exists $D\subseteq V(H)$ such that $G|D$ is a rotator. Let $G$ be a reduced thickening of $(\overline H,F)$ such that \co~for some valid $F\subseteq V(G)^2$.  Then there exists a Tihany clique $K$ in $G$ with $|K|\leq 5$.
\begin{proof}
Assume not. Let $D=\{v_1,\ldots,v_9\}$ be as in the definition of a rotator. For $i=1,2,3$, let $A_i$ be the set of vertices of $V(H)\backslash D$ that are adjacent to $v_i$. Since $H$ is prismatic and $\{v_1,v_2,v_3\}$ is a triangle, it follows that $A_1\cup A_2\cup A_3=V(H)\backslash D$.

Let $I_1=\{\{5,6\},\{5,9\},\{6,8\},\{8,9\}\}$, $I_2=\{\{4,6\},\{4,9\},\{6,7\},\{7,9\}\}$ and $I_3=\{\{4,5\},\{4,8\},\{5,7\},\{7,8\}\}$. For $i=1,2,3$ and $\{k,l\}\in I_i$, let $A_i^{k,l}$ be the set of vertices of $V(H)\backslash D$ that are complete to $\{v_i,v_k,v_l\}$. Since $\{v_1,v_2,v_3\}$ and $\{v_i,v_{i+3},v_{i+6}\}$ are triangles for $i=1,2,3$ and $H$ is prismatic, we deduce that $A_i=\bigcup_{ \{k,l\}\in I_i} A_i^{k,l}$ for $i=1,2,3$. For $i=1,2,3$ and $\{k,l\}\in I_i$ and since $\{v_1,v_{4},v_{7}\}, \{v_2,v_{5},v_{8}\}, \{v_3,v_{6},v_{9}\}$ are triangles and $H$ is prismatic, it follows that $A_i^{k,l}$ is anticomplete to $v_m$ for all $m\in\{4,5,6,7,8,9\}\backslash\{i,k,l\}$.

Assume that $A_2^{49}$ and $A_3^{48}$ are not empty. Since $H$ is prismatic, we deduce that $A_2^{49}$ is anticomplete to $A_3^{48}$ in $H$. Let $x\in A_2^{49}$ and $y\in A_3^{48}$. Then $C_{\overline H}(\{v_1,v_5,v_6,x,y\}$ is a clique and $\{v_1,v_5,v_6,x,y\}$ is in the strong core. Hence by~\ref{dense_thickening2}, there exists a Tihany clique of size 5 in $G$.

Assume now that $A_2^{49}$ is not empty, but $A_3^{48}$ is empty. Let $x\in A_2^{49}$. Then $C_{\overline H}(\{v_1,v_5,v_6,x\})$ is a clique and $\{v_1,v_5,v_6,x\}$ is in the core. Moreover $\{v_1,v_6,x\}$ is in the strong core. Since $\{v_2,v_5,v_8\}$ is a triad and $v_2$ is in the strong core, it follows that if there exists $E\in F$ with $v_5\in E$, then $E=\{v_5,v_8\}$. But $v_8$ is not adjacent to $v_6$ in $\overline H$. Hence by~\ref{dense_thickening2}, there exists a Tihany clique $K$ of size 4 in $G$.

We may now assume that $A_2^{49}=A_3^{48}=\emptyset$. Since $H$ is prismatic, it follows that $C_{\overline H}(\{v_1,v_5,v_6\})$ is an anti-matching. Moreover $\{v_1,v_5,v_6\}$ is in the core and $v_1$ is in the strong core. For $i=2,3$, since $\{v_i,v_{i+3},v_{i+6}\}$ is a triad and $v_i$ is in the strong core, it follows that if there exists $E\in F$ with $v_{i+3}\in E$, then $E=\{v_{i+3},v_{i+6}\}$. But $v_8$ is not adjacent to $v_6$ and $v_9$ is not adjacent to $v_5$. Hence by~\ref{dense_thickening2}, there exists a Tihany triangle in $G$. This concludes the proof of~\ref{rotator}.
\end{proof}
\end{lemma}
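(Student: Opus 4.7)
The strategy is to locate a small clique of $\overline{H}$ inside the rotator whose common $\overline{H}$-neighborhood is either a clique or an antimatching, and then to invoke Lemma~\ref{dense_thickening2} or Lemma~\ref{lemma:goodclique} to lift this to a Tihany clique of $G$. The natural candidate is a triple such as $\{v_{1},v_{5},v_{6}\}$: each of these three vertices sits on a different triangle of the rotator ($\{v_{1},v_{2},v_{3}\}$, $\{v_{2},v_{5},v_{8}\}$, $\{v_{3},v_{6},v_{9}\}$), so in $H$ they form a triad, and therefore a triangle in $\overline{H}$.

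First I would exploit prismaticity of $H$ (equivalently, antiprismaticity of $\overline{H}$) to classify every $x \in V(H)\setminus D$. Since $\{v_{1},v_{2},v_{3}\}$ is a triangle of $H$, the quadruple $\{x,v_{1},v_{2},v_{3}\}$ cannot be a claw of $\overline{H}$, so $x$ is $H$-adjacent to at least one of $v_{1},v_{2},v_{3}$; the same argument applied to the spoke triangles $\{v_{i},v_{i+3},v_{i+6}\}$ forces $x$ to meet each spoke. Together these give a decomposition of $V(H)\setminus D$ into the sets $A_{i}^{k,l}$ of vertices $H$-complete to a given triangle through $v_{i}$, and further applications of prismaticity show each $A_{i}^{k,l}$ is $H$-anticomplete to the vertices of $D$ on the other two spokes.

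Next I would case-split on whether $A_{2}^{4,9}$ and $A_{3}^{4,8}$ are nonempty. When both are nonempty, picking $x\in A_{2}^{4,9}$ and $y\in A_{3}^{4,8}$ makes $C_{\overline{H}}(\{v_{1},v_{5},v_{6},x,y\})$ a clique and leaves this 5-clique in the strong core of $\overline{H}$, so Lemma~\ref{dense_thickening2} yields a Tihany $5$-clique of $G$. When exactly one is nonempty, say $A_{2}^{4,9}\neq\emptyset$, I would use a 4-clique of the form $\{v_{1},v_{5},v_{6},x\}$: the common $\overline{H}$-neighborhood is a clique, and the strong-core position of $v_{1}$ together with the form of the triads containing $v_{5}$ and $v_{6}$ restricts the changeable edges enough that Lemma~\ref{dense_thickening2} still applies. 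Finally, when both $A_{2}^{4,9}$ and $A_{3}^{4,8}$ are empty, $C_{\overline{H}}(\{v_{1},v_{5},v_{6}\})$ is only an antimatching, but the same strong-core control (vertices of $F$-pairs are not in the strong core, by \cite{claws1}) pins down the $F$-edges meeting $C(K)$ so that Lemma~\ref{lemma:goodclique} produces a Tihany triangle.

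The main obstacle will be the bookkeeping around $F$: in each case one must verify that the chosen $K \subseteq V(\overline{H})$ and the non-edges appearing in $C_{\overline{H}}(K)$ do not meet $F$ in a way that destroys the density or goodness hypothesis of the tool lemma. This is handled uniformly by observing which of $v_{1},\ldots,v_{9}$ lie in the strong core of $\overline{H}$ (hence cannot belong to any pair of $F$) and which only lie in the core, and then using the classification of triads through each of the latter to pin down the one possible $F$-partner. Once this audit is complete, each branch collapses to a single invocation of Lemma~\ref{dense_thickening2} or Lemma~\ref{lemma:goodclique}.
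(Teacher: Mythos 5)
Your proposal follows essentially the same line as the paper's proof: the same classification of $V(H)\setminus D$ into the sets $A_i^{k,l}$, the same pivot on the triad $\{v_1,v_5,v_6\}$ of $H$ (a triangle of $\overline{H}$), the same three-way case split on whether $A_2^{4,9}$ and $A_3^{4,8}$ are empty, and the same appeal to Lemma~\ref{dense_thickening2} (or~\ref{lemma:goodclique} in the antimatching case) after auditing which $F$-pairs can meet $C(K)$ via strong-core considerations. One small point: in the final case, where $C_{\overline{H}}(\{v_1,v_5,v_6\})$ is only an antimatching, you invoke Lemma~\ref{lemma:goodclique}, which is the right tool (the paper's text cites~\ref{dense_thickening2} there, which as stated requires a clique neighborhood, so your choice is arguably cleaner).
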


\begin{lemma}\label{lemmatwister}
Let $H$ be a non-orientable prismatic graph. Assume that there exists $W\subseteq V(H)$ such that $H|W$ is a twister. Further, assume that there is no induced rotator in $H$. If $G$ is a reduced thickening of $(\overline H,F)$ such that \co, then there exists a Tihany clique $K$ in $G$ with $|K|\leq 4$.
\begin{proof}
Assume not. Let $W=\{v_1,v_2,\ldots,v_8,u_1,u_2\}$ be as in the definition of a twister. Throughout the proof, all addition is modulo 8. For $i=1,\ldots,8$, let $A_{i,i+1}$ be the set of vertices in $V\backslash W$ that are adjacent to $v_i$ and $v_{i+1}$ and let $B_{i,i+2}$ be the set of vertices in $V\backslash W$ that are adjacent to $v_i$ and $v_{i+2}$. Moreover, let $C\subseteq V\backslash W$ be the set of vertices that are anticomplete to $W$. Since $H$ is prismatic, we deduce that $\bigcup_{i=1}^8 (A_{i,i+1}\cup B_{i,i+2}) \cup C=V\backslash W$. Moreover $A_{i,i+1}$ is complete to $\{v_i,v_{i+1},v_{i+3}, v_{i+6}\}$ and anticomplete to $W\backslash \{v_i,v_{i+1},v_{i+3}, v_{i+6}\}$. Since $H$ is prismatic, it follows also that $B_{i,i+2}$ is complete to $u_{i \mod 2}\}$ and anticomplete to $W\backslash \{v_i,v_{i+2},u_{i\mod 2}\}$. Moreover, $C$ is anticomplete to $\{v_1,v_2,\ldots,v_8\}$. 

\begin{claim}\label{claimtwister1}
There exists $i\in\{1,\ldots,8\}$, such that $A_{i,i+1}$ and $A_{i+3,j+4}$ are either both empty or both non-empty.
\end{claim}
Assume not. By symmetry we may assume that $A_{1,2}$ is not empty and $A_{4,5}$ is empty. Since $A_{1,2}$ is not empty, we deduce that $A_{6,7}$ is empty. Since $A_{4,5}$ and $A_{6,7}$ are empty, it follows that $A_{7,8}$ and $A_{3,4}$ are not empty. Let $x\in A_{7,8}$ and $y\in A_{3,4}$. Then $G|\{v_8,u_1,v_4,x,v_6,v_3,v_7,v_2,y\}$ is a rotator, a contradiction. This proves~\cref{claimtwister1}.

\begin{claim}\label{claimtwister3}
If $A_{i,i+1}$ and $A_{i+3,i+4}$ are both non-empty for some $i\in \{1,\ldots,8\}$, then there exists a Tihany clique of size 5 in $G$.
\end{claim}

Assume that $A_{2,3}$ and $A_{5,6}$ are not empty and let $x\in
A_{2,3}$ and $y\in A_{5,6}$. The anti-neighborhood of
$\{v_1,v_7,u_2,x,y\}$ in $H$ is a stable set. Moreover,
$\{v_1,v_7,u_2,x,y\}$ is in the strong core and hence
by~\ref{dense_thickening2} there is a Tihany clique of size $5$ in
$G$. This proves~\cref{claimtwister3}.

\begin{claim}\label{claimtwister2}
If $A_{i,i+1}$ and $A_{i+3,i+4}$ are both empty for some $i\in \{1,\ldots,8\}$, then there exists a Tihany clique of size 4 in $G$.
\end{claim}
Assume that $A_{2,3}$ and $A_{5,6}$ are both empty. Then the
anti-neighborhood of $\{v_1,v_7,u_2\}$ in $H$ is $A_{8,2}\cup
A_{2,4}\cup A_{4,6}\cup A_{6,8}$ which is a matching. Moreover $u_2$
is in the strong core and $\{v_1,v_7\}$ is in the core. Possibly
$\{v_1,v_5\}$ and $\{v_3,v_7\}$ are in $F$, but $A_{2,8}\cup
A_{2,4}\cup A_{4,6}\cup A_{6,8}\cup \{v_3,v_7\}$ is also an
anti-matching. Hence by~\ref{lemma:goodclique}, there is a Tihany
clique of size $4$ in $G$. This proves~\cref{claimtwister2}.\\

Now by~\cref{claimtwister1}, there exists $i$ such that $A_{i,i+1}$ and $A_{i+3,i+4}$ are either both empty or both non-empty. If $A_{i,i+1}$ and $A_{i+3,i+4}$ are both non-empty, then by~\cref{claimtwister3} there is a Tihany clique of size 5 in $G$. If $A_{i,i+1}$ and $A_{i+3,i+4}$ are both empty, then by~\cref{claimtwister2} there is a Tihany clique of size 4 in $G$. This concludes the proof of~\ref{lemmatwister}.
\end{proof}
\end{lemma}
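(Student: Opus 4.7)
The plan is to exploit the rigid structure of the twister $W$ inside $H$ by classifying every vertex of $V(H)\setminus W$ according to its adjacency pattern on $W$. Label $W=\{v_1,\ldots,v_8,u_1,u_2\}$ as in the definition of a twister, and for each pair $\{v_i,v_j\}$ lying in a common triangle with some other twister vertex, let $A_{i,j}$ or $B_{i,j}$ denote the vertices outside $W$ that are adjacent exactly to $\{v_i,v_j\}$ among the $v_\ell$'s, with the $u$-adjacency forced by prismaticity. Using that $H$ is prismatic (every edge lies in a unique triangle), a routine case analysis pins down the full $W$-neighborhood of each external vertex and shows these classes together with an anticomplete class $C$ partition $V(H)\setminus W$.

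The analysis then splits on whether, for some $i$, the opposite classes $A_{i,i+1}$ and $A_{i+3,i+4}$ are simultaneously empty or simultaneously non-empty. I would first show that such an $i$ must exist: otherwise by shifting indices one can find $i$ for which both $A_{i-1,i}$ and $A_{i+4,i+5}$ are non-empty while $A_{i,i+1}$ and $A_{i+3,i+4}$ are empty, and then splicing one vertex from each non-empty class with suitable $v_\ell$'s produces an induced rotator, contradicting the hypothesis.

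With such an $i$ in hand, the two remaining cases are handled symmetrically. In the simultaneously non-empty case I select $x\in A_{i,i+1}$ and $y\in A_{i+3,i+4}$ and combine them with three carefully chosen twister vertices (two opposite $v_\ell$'s and the appropriate $u$) to form a $5$-clique of $\overline H$ whose common neighborhood is a clique; prismaticity rules out anything in the remaining $B$- and $C$-classes being mixed on this $5$-set, so \ref{dense_thickening2} delivers a Tihany $5$-clique in $G$. In the simultaneously empty case one can only build a $3$-clique in the core whose common neighborhood in $\overline H$ is an antimatching (a perfect matching between four $B$-classes). One then adds a fourth vertex from $W$ chosen to lie in the strong core, so that any $F$-pair incident to it is controlled, and invokes \ref{lemma:goodclique} to obtain a Tihany $4$-clique.

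The main obstacle will be the bookkeeping: verifying that the selected cliques lie in the strong core (so that changeable edges in $F$ cannot disturb adjacencies between the clique and its common neighborhood), and enumerating which $B_{i,i+2}$ and $C$ contributions land in $C(K)$ in each configuration. Ruling out the mixed case via the rotator-freeness hypothesis is the other delicate step, since one must actually exhibit the rotator's precise cyclic pattern from two opposite non-empty $A$-classes.
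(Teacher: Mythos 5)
Your proposal follows the same route as the paper: partition $V(H)\setminus W$ by adjacency pattern on the twister, split on whether opposite classes $A_{i,i+1},A_{i+3,i+4}$ are simultaneously empty or non-empty (forcing the existence of such an $i$ via the no-rotator hypothesis), then use \ref{dense_thickening2} on a strong-core $5$-clique in the non-empty case and \ref{lemma:goodclique} on a $3$-clique with antimatching neighborhood in the empty case. One small slip: in the empty case you don't "add a fourth vertex from $W$" yourself — \ref{lemma:goodclique} applied to the $3$-clique $\{v_1,v_7,u_2\}$ directly produces a Tihany clique of size $4$ by adjoining a vertex from $C(K)$ — but this does not affect the argument.
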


\begin{lemma}\label{nonorientable}
Let $H$ be a non-orientable prismatic graph. Let $G$ be a reduced thickening of $(\overline H,F)$ for some valid $F\subseteq V(G)^2$ such that \co; then there exists a Tihany clique $K$ in $G$ with $K\leq 5$.
\begin{proof}
By~\ref{twisterandrotator}, it follows that there is an induced twister or an induced rotator in $H$. If there is an induced rotator in $H$, then by~\ref{rotator}, it follows that there is a Tihany clique of size $5$ in $G$. If there is an induced twister and no induced rotator in $H$, then by~\ref{lemmatwister}, it follows that there is a Tihany clique of size 4 in $G$. This proves~\ref{nonorientable}. \end{proof}
\end{lemma}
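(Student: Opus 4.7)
The plan is to derive the statement directly as a corollary of the three preceding results in this section, since each addresses one of the possible structural cases for a non-orientable prismatic graph.

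First I would invoke Lemma~\ref{twisterandrotator}, which characterizes orientability of prismatic graphs by the absence of induced twisters and rotators. Since $H$ is non-orientable, the contrapositive gives us an induced subgraph of $H$ that is either a twister or a rotator. This dichotomy is what the next two lemmas are set up to handle.

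Next I would split on which of the two cases occurs. If $H$ contains an induced rotator, then Lemma~\ref{rotator} applies directly (its hypotheses are exactly that $H$ is non-orientable prismatic, contains an induced rotator, and that $G$ is a reduced thickening of $(\overline H,F)$ with $\chi(G)>\omega(G)$), yielding a Tihany clique $K$ with $|K|\leq 5$. Otherwise $H$ contains an induced twister but no induced rotator, so Lemma~\ref{lemmatwister} applies and produces a Tihany clique $K$ with $|K|\leq 4$, which in particular satisfies $|K|\leq 5$.

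There is no real obstacle here: the work has been done in the preceding two lemmas, and the role of this statement is simply to package them together via the twister-or-rotator dichotomy of~\ref{twisterandrotator}. The only thing to be careful about is that the bound on $|K|$ matches the weaker of the two bounds obtained in the case analysis, namely $5$, which is exactly what the statement claims. This completes the proof.
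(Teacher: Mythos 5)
Your proposal is correct and matches the paper's proof exactly: both invoke Lemma~\ref{twisterandrotator} for the twister-or-rotator dichotomy, then dispatch the rotator case via Lemma~\ref{rotator} and the twister-without-rotator case via Lemma~\ref{lemmatwister}, obtaining the bound $|K|\leq 5$.
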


\section{Three-cliqued Graphs}
In this section we prove Theorem~\ref{main} for those claw-free graphs $G$ for which $V(G)$ can be partitioned into three cliques. The definition of three-cliqued graphs has been given at the start of Section~\ref{sec:orientable}. A list of three-cliqued claw-free graphs that are needed for the statement of the structure theorem can be found in appendix~\ref{app:3cliqued}. We begin with a structure theorem from~\cite{claws5}.

\begin{theorem}\label{clawV_4.1}
Every three-cliqued claw-free graph admits a worn hex-chain into terms each of which is a reduced thickening of a permutation of a member of one of $\mathcal{TC}_1,\ldots,\mathcal{TC}_5$.
\end{theorem}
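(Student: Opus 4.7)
The plan is to deduce Theorem~\ref{clawV_4.1} as a direct corollary of the main structure theorem for three-cliqued claw-free graphs established by Chudnovsky and Seymour in \cite{claws5}. That theorem classifies such graphs by showing that any three-cliqued claw-free graph $(G,A,B,C)$ either is a permutation of a thickening of some member of $\mathcal{TC}_1 \cup \cdots \cup \mathcal{TC}_5$, or admits a nontrivial worn hex-join decomposition into two strictly smaller three-cliqued claw-free graphs $(G_1,A_1,B_1,C_1)$ and $(G_2,A_2,B_2,C_2)$. Our task here is not to reprove that classification, but to package its output into the worn hex-chain form asserted in the statement.

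I would proceed by induction on $|V(G)|$. The base case is when $(G,A,B,C)$ already lies in a permutation of one of the basic classes $\mathcal{TC}_i$, which yields a length-one worn hex-chain (after thickening-reduction, see below). In the inductive case, invoke the decomposition from \cite{claws5} to write $(G,A,B,C)$ as a worn hex-join of two smaller three-cliqued claw-free graphs, apply the inductive hypothesis to each piece to obtain worn hex-chains whose terms are (permutations of) reduced thickenings of members of $\bigcup_i \mathcal{TC}_i$, and concatenate the two chains. That concatenation is a well-formed worn hex-chain for $(G,A,B,C)$ precisely because the rules governing cross-term adjacencies in a worn hex-chain are defined so as to coincide with the adjacency rules of a worn hex-join applied iteratively; the triad-avoidance condition on $A_i$--$B_j$ edges (and its cyclic analogues) propagates from each piece into the combined chain because it holds inside each summand by induction and holds between the two summands by definition of their worn hex-join.

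To upgrade each term to a \emph{reduced} thickening, I would invoke Lemma~\ref{homo} repeatedly. If some term is a thickening of $(H,F)$ with $H$ a permutation of a member of $\bigcup_i \mathcal{TC}_i$ and some pair $\{u,v\} \in F$ corresponds to a non-reduced $W$-join $(X_u,X_v)$, then Lemma~\ref{homo} supplies a claw-free graph with the same chromatic number and strictly fewer edges that is still a thickening of $(H,F)$. Since $|E|$ drops at each application the process terminates. This operation preserves the three-cliqued structure because $X_u \cup X_v$ is entirely contained in one of $A,B,C$, and it preserves the chain decomposition for the same reason.

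The main obstacle is the underlying classification in \cite{claws5}, which I am treating as a black box: proving from scratch that the five classes $\mathcal{TC}_1, \ldots, \mathcal{TC}_5$ exhaust the worn-hex-join-indecomposable three-cliqued claw-free graphs is the entire content of the Chudnovsky--Seymour structure theorem in that setting, and there is no shortcut avoiding it. What I would verify carefully by hand is only the bookkeeping: that concatenation of worn hex-joins yields a worn hex-chain satisfying all the cyclic adjacency and triad-avoidance requirements, and that the $W$-join reductions of Lemma~\ref{homo} can be applied uniformly inside a given $\mathcal{TC}_i$ without destroying membership in that class.
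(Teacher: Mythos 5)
The paper does not prove Theorem~\ref{clawV_4.1} at all: it is stated as a quotation from \cite{claws5} (the label \texttt{clawV\_4.1} indicates it is Theorem~4.1 of that paper, translated from the trigraph setting there to the thickening language used here). Your proposal instead tries to re-derive the result from a putative dichotomy ("either basic or admits a worn hex-join") and then to force the thickenings into \emph{reduced} form. That last step is where the argument breaks.

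The problem is with your use of Lemma~\ref{homo}. That lemma, applied to a graph $G$ with a non-reduced $W$-join, produces a \emph{different} graph $H$ (a proper subgraph, with the same vertex set, strictly fewer edges, and the same chromatic number). It does \emph{not} assert that $H$ is still a thickening of the same pair $(H,F)$, and in particular it gives no way to re-express the original graph $G$ as a reduced thickening. In this paper, Lemma~\ref{homo} is used only inside a minimal-counterexample argument (Corollary~\ref{reduced}): a subgraph-minimal counterexample $G$ to Theorem~\ref{main} cannot contain a non-reduced $W$-join, because deleting edges as in Lemma~\ref{homo} would yield a strictly smaller counterexample. That is a different kind of statement from what you need here. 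Theorem~\ref{clawV_4.1} is a universal structural claim about \emph{every} three-cliqued claw-free graph; you must exhibit a worn hex-chain decomposition of the given graph $G$ itself, and you are not free to replace $G$ by a simpler graph with the same chromatic number. As written, your "upgrade each term to a reduced thickening" step therefore has a genuine gap: it would prove that some sparser graph with the same $\chi$ has the desired decomposition, not that $G$ does.

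Aside from that, the inductive concatenation-of-chains step is fine in spirit (and it is essentially how such chain theorems are assembled in \cite{claws5}), but re-deriving the packaged chain form is unnecessary, since \cite{claws5} already states the result directly in that form. The cleaner route, and the one taken by the paper, is simply to cite it. If you do want a self-contained account of why the terms can be taken \emph{reduced}, you would need a structural lemma that a thickening of a trigraph in $\bigcup_i\mathcal{TC}_i$ can always be re-indexed as a reduced thickening of a (possibly different) trigraph that is still in $\bigcup_i\mathcal{TC}_i$ — Lemma~\ref{homo} does not supply this.
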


Let $(G,A,B,C)$ be a three-cliqued graph and $K$ be a clique of $G$. We say that $K$ is \textit{strongly Tihany} if for all three-cliqued graphs $(H,A',B',C')$, $K$ is Tihany in every worn hex-join $(I,A\cup A',B\cup B',C\cup C')$ of $(G,A,B,C)$ and $(H,A',B',C')$ such that $\chi(I)>\omega(I)$.

A clique $K$ is said to be bi-cliqued if exactly two of $K\cap A, K\cap B, K\cap C$ are not empty and every $v \in K$ is in a triad. A clique $K$ is said to be tri-cliqued if $K\cap A, K\cap B, K\cap C$ are all not empty and every $v \in K$ is in a triad.
\begin{lemma}\label{strongly_tihany_1}
Let $K$ be a dense clique in $(G,A_1,A_2,A_3)$. If both $K$ and $\overline C(K)$ are bi-cliqued, then $K$ is strongly Tihany. 
\begin{proof}
Let $(G',A',B',C')$ be a three-cliqued claw-free graph and let $(H,D,E,F)$ be a worn hex-join of $(G,A,B,C)$ and $(G',A',B',C')$.  Then in $H$, $C(K) \cap V(G')$ is a clique that is complete to $C(K) \cap V(G)$.  Hence, by~\ref{W-1}, $K$ is Tihany in $H$ and hence $H$ is strongly Tihany.
\end{proof}
\end{lemma}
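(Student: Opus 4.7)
The plan is to show directly that $K$ remains dense in every worn hex-join of $(G,A_1,A_2,A_3)$ with another three-cliqued claw-free graph $(G',A_1',A_2',A_3')$, and then to invoke Lemma~\ref{W-1}. The hypothesis $\chi(I)>\omega(I)$ required by that lemma is exactly what the strongly Tihany definition supplies for the combined graph $I$, so once $C_I(K)$ is shown to be a clique the conclusion is immediate.

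The first step would be to use the bi-cliqued hypotheses on $K$ and $\overline C(K)$, together with a permutation of the three cliques, to assume that $K\subseteq A_1\cup A_2$ with both $K\cap A_1$ and $K\cap A_2$ nonempty, that $C(K)\subseteq A_1\cup A_2$ (so neither $K$ nor $C(K)$ meets $A_3$), and that every vertex of $K\cup C(K)$ lies in a triad of $G$. Since $K$ is dense in $G$, the set $C(K)\cap V(G)$ is already a clique contained in $A_1\cup A_2$; what remains is to understand $C_I(K)\cap V(G')$.

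To handle $V(G')$, I would use the structure of the worn hex-join: between $(A_1,A_2,A_3)$ and $(A_1',A_2',A_3')$ there are two mandatory-complete inter-clique directions and one restricted direction in which edges are allowed only between pairs of non-triad vertices. Since every vertex of $K$ lies in a triad, $K$ receives no edges at all in the restricted direction. A short case check (separating the two possible orderings of $G$ and $G'$ inside the hex-join, since the rules for $i<j$ are not symmetric in $i,j$) then shows that the unique $A_j'$ that is mandatory-complete to both $A_1$ and $A_2$ is also the only clique of $G'$ that can meet $C_I(K)$. Hence $C_I(K)\cap V(G')$ is contained in a single clique of $G'$, and by the same mandatory-complete rules it is complete to $C(K)\cap V(G)\subseteq A_1\cup A_2$.

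Putting the two pieces together, $C_I(K)=(C(K)\cap V(G))\cup(C_I(K)\cap V(G'))$ is a clique in $I$, so $K$ is dense in $I$, and Lemma~\ref{W-1} gives that $K$ is Tihany in $I$. Since $(G',A_1',A_2',A_3')$ was arbitrary, $K$ is strongly Tihany. The only delicate step is the case check above: one must verify, in each of the two orderings of $G$ and $G'$, that the restricted direction of the hex-join is precisely the one that cannot reach $K$, so that $C_I(K)\cap V(G')$ genuinely collapses into a single clique of $G'$. Once this bookkeeping is done, the rest of the argument is immediate.
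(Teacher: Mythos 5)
Your proposal is correct and follows essentially the same route as the paper's proof: both show that $K$ stays dense after the worn hex-join (the paper simply asserts that $C(K)\cap V(G')$ is a clique complete to $C(K)\cap V(G)$, which is exactly the bookkeeping you flesh out), and then conclude via Lemma~\ref{W-1}. The only minor over-caution is your case split on the two orderings of $G$ and $G'$: by the paper's definition of strongly Tihany, $(G,A,B,C)$ is always the first factor of the worn hex-join, so only one ordering actually needs to be checked.
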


\begin{lemma}\label{strongly_tihany_2}
Let $K$ be a dense clique of a three-cliqued graph $(G,A,B,C)$. If $K$ is tri-cliqued, then $K$ is strongly Tihany.
\begin{proof}
Let $(G',A',B',C')$ be a three-cliqued claw-free graph and let $(H,D,E,F)$ be a hex-join of $(G,A,B,C)$ and $(G',A',B',C')$.  Then in $H$, $C_H(K)\cap V(G')=\emptyset$ and thus $C_H(K)$ is a clique in $H$. Hence, by~\ref{W-1}, $K$ is strongly Tihany.
\end{proof}
\end{lemma}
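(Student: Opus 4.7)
The plan is to follow the same template as the proof of Lemma~\ref{strongly_tihany_1}: fix an arbitrary three-cliqued graph $(G',A',B',C')$ and an arbitrary worn hex-join $(H,A\cup A',B\cup B',C\cup C')$ of $(G,A,B,C)$ and $(G',A',B',C')$ with $\chi(H)>\omega(H)$, show that $C_H(K)$ is a clique in $H$, and then invoke Lemma~\ref{W-1}. The difference from Lemma~\ref{strongly_tihany_1} is where the ``cliqueness'' of $C_H(K)$ comes from. Here we will show that in fact $C_H(K)\cap V(G')=\emptyset$, so $C_H(K)=C_G(K)$, which is already a clique because $K$ is dense.

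To establish $C_H(K)\cap V(G')=\emptyset$, I would use the tri-cliqued hypothesis: choose $k_A\in K\cap A$, $k_B\in K\cap B$, $k_C\in K\cap C$, each of which (by definition of tri-cliqued) lies in a triad of $H$. Now consider any $v\in V(G')$ and split into three cases according to which of $A',B',C'$ contains $v$. If $v\in A'$, then under the worn hex-join rules, $C$ is complete to $V(G')\setminus A'$ (which does not force any edge from $k_C$ to $v$), and any further edge between $C$ and $A'$ requires both endpoints to lie in no triad of $H$; since $k_C$ lies in a triad, $k_Cv\notin E(H)$, so $v\notin C_H(K)$. The cases $v\in B'$ (using $k_A$ and the $A$--$B'$ rule) and $v\in C'$ (using $k_B$ and the $B$--$C'$ rule) are handled symmetrically.

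Having shown $C_H(K)\cap V(G')=\emptyset$, we conclude $C_H(K)=C_G(K)$, which is a clique because $K$ is dense in $(G,A,B,C)$. Applying Lemma~\ref{W-1} in $H$ then yields that $K$ is Tihany in $H$. Since $(G',A',B',C')$ and the hex-join were arbitrary, this proves that $K$ is strongly Tihany.

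The main obstacle is simply unpacking the worn hex-chain definition accurately: one must be careful that the only ``optional'' edges between $G$ and $G'$ are edges between $A$ and $B'$, between $B$ and $C'$, and between $C$ and $A'$, and that each such edge requires both of its endpoints to lie in no triad. Once this is nailed down, the triad-membership assumption built into ``tri-cliqued'' immediately rules out every vertex of $V(G')$ from $C_H(K)$, and the remainder of the argument is a one-line application of Lemma~\ref{W-1}.
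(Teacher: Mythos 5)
Your proof is correct and takes essentially the same approach as the paper: show $C_H(K)\cap V(G')=\emptyset$ via the worn hex-join rules and the triad-membership built into ``tri-cliqued'', then invoke Lemma~\ref{W-1}. You simply spell out the case analysis ($v\in A'$, $B'$, $C'$) that the paper leaves implicit.
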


\begin{lemma}\label{type_of_line_graph}
Let $(G,A,B,C)$ be an element of $\mathcal{TC}_1$ and $G'$ be a reduced thickening of $(G,F)$ for some valid $F\subseteq V(G)^2$. Then there is either a strongly Tihany brace or a strongly Tihany triangle in $G'$.
\begin{proof}
Let $H,v_1,v_2,v_3$ be as in the definition of $\mathcal{TC}_1$; so $L(H)=G$. 
Let $V_{12}$ be the set of vertices of $H$ that are adjacent to $v_1$ and $v_2$ but not to $v_3$ and let $V_{13},V_{23}$ be defined similarly. Let $V_{123}$ be the set of vertices complete to $\{v_1,v_2,v_3\}$.

Suppose that $V_{ij}\neq \emptyset$ for some $i,j$.  Then let $v_{ij}\in V_{ij}$, and let $x_i$ be the vertex in $G$ corresponding to the edge $v_{ij}v_i$ in $H$ and $x_j$ be the vertex in $G$ corresponding to the edge $v_{ij}v_j$ in $H$. Then $C_G(\{x_i,x_j\})=\emptyset$, and thus by~\ref{dense_thickening} and~\ref{strongly_tihany_1}, there exists a strongly Tihany brace in $G'$.

So we may assume that $V_{ij}=\emptyset$ for all $i,j$.  Then from the definition of  $\mathcal{TC}_1$, it follows that $V_{123}$ is not empty.  Let $v\in V_{123}$ and let $x_1,x_2,x_3$ be the vertices in $G$ corresponding to the edges $vv_1,vv_2,vv_3$ of $H$, respectively. Then $C_G(\{x_1,x_2,x_3\})=\emptyset$ and hence by~\ref{dense_thickening} and~\ref{strongly_tihany_2}, there exists a strongly Tihany triangle in $G'$. This proves~\ref{type_of_line_graph}.
\end{proof}
\end{lemma}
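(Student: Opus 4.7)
The plan is to work inside $G = L(H)$ and exploit the incidence structure of $H$ around the triangle $\{v_1, v_2, v_3\}$ distinguished by the $\mathcal{TC}_1$ construction. The high-level strategy is to produce an explicit brace or triangle $K$ of $G$ whose common-neighbor set in $G$ is \emph{empty}; such a $K$ is vacuously dense, so by~\ref{dense_thickening} it lifts to a dense clique of the reduced thickening $G'$, and by one of~\ref{strongly_tihany_1} or~\ref{strongly_tihany_2} this lifted clique is strongly Tihany in every worn hex-join built from $(G,A,B,C)$.

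To produce such a $K$, I would partition $V(H) \setminus \{v_1, v_2, v_3\}$ by adjacency to the three distinguished vertices into sets $V_{ij}$ (neighbors of $v_i$ and $v_j$ only), $V_{123}$ (neighbors of all three), and lower-degree types, and then split into two cases. In the first case, some $V_{ij}$ is non-empty: I pick $v_{ij} \in V_{ij}$ and let $x_i, x_j$ be the vertices of $G$ representing the edges $v_{ij}v_i$ and $v_{ij}v_j$; these form a brace with $x_i \in A_i$ and $x_j \in A_j$. Reading off the $\mathcal{TC}_1$ axioms, no third edge of $H$ shares an endpoint with both $v_{ij}v_i$ and $v_{ij}v_j$, so $C_G(\{x_i, x_j\}) = \emptyset$. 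The brace is then bi-cliqued, and~\ref{strongly_tihany_1} applies. In the second case, every $V_{ij}$ is empty, and the defining axioms of $\mathcal{TC}_1$ force $V_{123}$ to be non-empty; I pick $v \in V_{123}$ and form the triangle $\{x_1, x_2, x_3\}$ from the edges $vv_1, vv_2, vv_3$. The same structural considerations give $C_G(\{x_1,x_2,x_3\}) = \emptyset$, the triangle is tri-cliqued, and~\ref{strongly_tihany_2} concludes the proof.

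The hard part is the structural book-keeping inside $\mathcal{TC}_1$: one must unpack the appendix definition and verify that (i) each vertex in $V_{ij}$ has $H$-degree exactly two (so that no further edge of $H$ becomes a common $G$-neighbor of $x_i$ and $x_j$) and $v_i, v_j$ are non-adjacent in $H$ (otherwise the edge $v_iv_j$ would itself be a common $G$-neighbor); (ii) in the second case $V_{123}$ is necessarily non-empty and each of its vertices has $H$-degree exactly three; and (iii) each vertex $x_i$ constructed above actually lies in some triad of $G$, so that the triad condition in the definitions of bi-cliqued and tri-cliqued is satisfied. Modulo these structural facts about $\mathcal{TC}_1$, the argument reduces to a clean application of the dense-clique machinery developed earlier.
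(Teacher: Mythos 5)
Your proposal is correct and follows essentially the same approach as the paper: partition $V(H)\setminus\{v_1,v_2,v_3\}$ by adjacency pattern, case on whether some $V_{ij}$ is non-empty, exhibit a brace or triangle with empty common neighborhood in $G=L(H)$, and pass through~\ref{dense_thickening} together with~\ref{strongly_tihany_1} or~\ref{strongly_tihany_2}. The structural facts you flag as needing verification all follow quickly from the $\mathcal{TC}_1$ definition (every edge of $H$ meets $\{v_1,v_2,v_3\}$, these are pairwise non-adjacent, the ``at most one vertex adjacent to $v_i$ and not $v_j$'' clause forces $|V_{123}|\geq 2$ when all $V_{ij}$ vanish given $\deg v_i\geq 3$, and the triad condition in (iii) is built into the definition of $\mathcal{TC}_1$), and the paper indeed leaves them implicit.
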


\begin{lemma}\label{circular interval graphs}
Let $(G,A,B,C)$ be an element of $\mathcal{TC}_2$ and let $(G',A',B',C')$ be a reduced thickening of $(G,F)$ for some valid $F \subseteq V(G)^2$. Then there is either a strongly Tihany brace or a strongly Tihany triangle in $G'$.
\end{lemma}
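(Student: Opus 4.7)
The plan is to combine the circular-interval approach of~\ref{longcircularinterval} with the strongly-Tihany strengthening from~\ref{strongly_tihany_1} and~\ref{strongly_tihany_2}. Since $G'$ is a reduced thickening of $(G,F)$ where $(G,A,B,C)\in\mathcal{TC}_2$ is a three-cliqued graph built from intervals $F_1,\ldots,F_k$ on a circle $\Sigma$, with $A,B,C$ corresponding to three arcs of $\Sigma$ whose union covers $V(G)$, it suffices to exhibit inside $G$ a dense brace or dense triangle that is either bi-cliqued with bi-cliqued closed common neighborhood, or tri-cliqued. Once that is done,~\ref{dense_thickening2} transfers the clique to a dense Tihany clique of the right size in $G'$, and~\ref{strongly_tihany_1} or~\ref{strongly_tihany_2} upgrades it to being strongly Tihany.

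First I would imitate the argument of~\ref{longcircularinterval}: select an interval $F_i$ that is inclusion-maximal among $F_1,\ldots,F_k$, let $x_k,\ldots,x_l$ be the vertices of $V(G)\cap F_i$ in cyclic order along $\Sigma$, and observe that $\{x_k,x_l\}$ is dense, because $C(\{x_k,x_l\})=\{x_{k+1},\ldots,x_{l-1}\}$ is a clique by maximality of $F_i$. By the definition of a fuzzy long circular interval graph, the pairs among $C(\{x_k,x_l\})$ avoid $F$, so the hypothesis of~\ref{dense_thickening2} is satisfied.

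Next I would examine how the chosen maximal interval sits relative to the three arc-boundaries on $\Sigma$ that cut $V(G)$ into the cliques $A,B,C$. If $x_k$ and $x_l$ lie in two distinct cliques among $A,B,C$ and every $x_j$ with $k\le j\le l$ lies in the union of the same two of them, then $\{x_k,x_l\}$ is bi-cliqued and $\overline C(\{x_k,x_l\})$ is bi-cliqued, so~\ref{strongly_tihany_1} produces a strongly Tihany brace in $G'$. Otherwise some interval must contain vertices from all three of $A,B,C$, and one selects three consecutive vertices $x\in A,y\in B,z\in C$ inside such an interval; the triangle $\{x,y,z\}$ is then dense and tri-cliqued, and~\ref{strongly_tihany_2} yields a strongly Tihany triangle.

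The main obstacle is a careful case analysis of the positions of the three arc-boundaries on $\Sigma$ relative to the endpoints of $F_1,\ldots,F_k$, especially the degenerate situation where every inclusion-maximal interval lies entirely within a single arc. In that situation one instead picks an interval that straddles an arc-boundary, which must exist since otherwise the three cliques $A,B,C$ would be pairwise anticomplete and $G$ would split into components, contradicting \co\ combined with the claw-free three-cliqued structure. A secondary technical point is verifying that the selected endpoints of the brace, and the three vertices realizing the triangle, each lie in some triad (as required by the definitions of bi-cliqued and tri-cliqued); this should follow from the defining conditions of $\mathcal{TC}_2$, which force any vertex failing to be in a triad to be dominated in a way that either reduces to a smaller member of $\mathcal{TC}_2$ or allows a simpler application of~\ref{W-1} directly.
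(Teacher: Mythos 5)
Your proposal follows essentially the same route as the paper: pick an inclusion-maximal interval $F_i$ meeting at least two of the cliques $A,B,C$, note that the extreme vertices $\{x_k,x_l\}$ of $V(G)\cap F_i$ form a dense brace with $\overline C(\{x_k,x_l\})=V(G)\cap F_i$, lift to $G'$ via~\ref{dense_thickening}, and then upgrade via~\ref{strongly_tihany_1} (when $F_i$ hits only two of $A,B,C$) or~\ref{strongly_tihany_2} (when it hits all three). In fact the paper's printed proof is truncated --- it is copied almost verbatim from~\ref{longcircularinterval}, only concludes ``a Tihany brace in $G$'', never invokes~\ref{strongly_tihany_1} or~\ref{strongly_tihany_2}, and even ends ``This proves~\ref{longcircularinterval}'' --- so the bi-cliqued/tri-cliqued case split you describe is genuinely the missing content. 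Two small corrections to your writeup: in the tri-cliqued case the triangle should be $\{x_k,x_j,x_l\}$ with $x_j$ a vertex of $F_i$ in the middle clique (``three consecutive vertices'' of $F_i$ need not meet all three of $A,B,C$, whereas $\{x_k,x_j,x_l\}$ does and its common neighborhood is contained in the clique $\{x_{k+1},\dots,x_{l-1}\}$); and the triad condition you worry about is automatic, since $\mathcal{TC}_2$ is defined so that every vertex lies in a triad.
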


\begin{proof}
Let $\Sigma,F_1,\ldots,F_k,L_1,L_2,L_3$ be as in the definition of $\mathcal{TC}_2$. Without loss of generality, we may assume that $A$ is not anticomplete to $B$. It follows from the definition of $G$ that there exists $F_i$ such that $F_i\cap A$ and $F_i\cap B$ are both not empty. Let $\{x_k,\ldots,x_l\}=V(H)\cap F_i$ and without loss of generality, we may assume that $\{x_k,\ldots,x_l\}$ are in order on $\Sigma$.

 Let $F_i$ be such that there exists no $j$ with $F_i\subset F_j$. Let $\{x_k,\ldots,x_l\}=V(H)\cap F_i$ and without loss of generality, we may assume that $\{x_k,\ldots,x_l\}$ are in order on $\Sigma$. Since $C(\{x_k,x_l\})=\{x_{k+1},\ldots,x_{l-1}\}$, it follows that $\{x_k,x_l\}$ is dense. If $x_k,x_l$ are the endpoints of $F_i$, it follows by~\ref{basic} and~\ref{dense_thickening} that there is a Tihany brace in $G$. Otherwise, by~\ref{dense_thickening2} there exists a Tihany brace in $G$. This proves~\ref{longcircularinterval}.
\end{proof}

\begin{lemma}\label{near_antiprismatic}
Let $(G,A,B,C)$ be an element of $\mathcal{TC}_3$ and let $(G',A',B',C')$ be a reduced thickening of $(G,F)$ for some valid $F \in V(G)^2$. Then there is either a strongly Tihany brace or a strongly Tihany triangle in $G'$.

\begin{proof}
Let $H,A=\{a_0,a_1,\dots,a_n\},B=\{b_0,b_1,\dots,b_n\}$, $C=\{c_1,\dots,c_n\}$, and $X$ be as in the definition of near-antiprismatic graphs.  Suppose that for some $i$, $a_i,b_i\in V(G)$.  Then since $|C\setminus X|\geq 2$, it follows that there exists $j \neq i$ such that $c_j\in V(G)$.  Now $T=\{a_i,b_i,c_j\}$ is dense and tri-cliqued in $G$, and so by~\ref{dense_thickening} and~\ref{strongly_tihany_2} there is a strongly Tihany triangle in $G'$. 
 
So we may assume that for all $i$, if $a_i \in V(G)$, then $b_i \not\in V(G)$.  Since by definition of $\mathcal{TC}_3$ every vertex is in a triad, it follows that $c_i \in V(G)$ whenever $a_i \in V(G)$.  Now suppose that $a_i,a_j\in V(G)$ for some $i \neq j$.  Then $(\{a_i,a_j\},\{c_i,c_j\})$ is a non-reduced homogeneous pair in $G$.  Hence we may assume that for all $i\neq j$ at most one of $a_i,a_j$ is in $V(G)$.  Let $a_i \in V(G)$,; then for some $j \neq i$ we have $c_j \in V(G)$.  Now $E=\{a_i,c_j\}$ is dense and bi-cliqued. Moreover $\overline C(E)$ is bi-cliqued, hence by~\ref{dense_thickening} and~\ref{strongly_tihany_1}, it follows that $E$ is a strongly Tihany brace in $G'$. This proves~\ref{near_antiprismatic}.
\end{proof}
\end{lemma}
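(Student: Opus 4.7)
The plan is to exhibit, inside $G$, a dense clique $K$ that is either tri-cliqued, or else bi-cliqued together with $\overline{C}(K)$ also being bi-cliqued; then Lemma~\ref{dense_thickening} lifts $K$ to a dense clique of the same size in $G'$, and Lemmas~\ref{strongly_tihany_1} or~\ref{strongly_tihany_2} promote it to a strongly Tihany clique. So the task reduces to a case analysis on which of the $a_i, b_i, c_i$ survive in $V(G)$ (i.e.\ are not in the ``deletion set'' $X$ of the near-antiprismatic definition).

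First I would look for an index $i$ with both $a_i, b_i \in V(G)$. Since $|C \setminus X| \geq 2$ (part of the definition of $\mathcal{TC}_3$), I can pick $j \neq i$ with $c_j \in V(G)$. The construction of a near-antiprismatic graph forces $\{a_i,b_i,c_j\}$ to be a triangle with one vertex in each of $A,B,C$, so it is tri-cliqued; a direct check of the adjacency rules shows its common neighborhood is a clique, so it is dense, and Lemma~\ref{strongly_tihany_2} finishes this case.

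Otherwise, for every $i$ at most one of $a_i, b_i$ lies in $V(G)$. Because every vertex of a graph in $\mathcal{TC}_3$ belongs to a triad, and the only triads available to an $a_i$ are of the form $\{a_i,b_i,c_i\}$, the presence of $a_i \in V(G)$ forces $c_i \in V(G)$. At this point I would rule out the situation where $a_i$ and $a_j$ both survive for distinct $i,j$: one checks that $(\{a_i,a_j\},\{c_i,c_j\})$ is a homogeneous pair that is a non-reduced $W$-join, contradicting the assumption that $G'$ is a reduced thickening (via Corollary~\ref{reduced}). By symmetry the same holds for the $b$'s. Thus at most one $a_i$ and at most one $b_j$ lie in $V(G)$.

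In this remaining configuration, pick some $a_i \in V(G)$ (if no such $a_i$ exists, swap roles with $b$ or $c$). Since at least two $c$'s survive, choose $c_j \in V(G)$ with $j \neq i$. The brace $\{a_i, c_j\}$ is bi-cliqued, and an inspection of the near-antiprismatic adjacencies — using that at most one $a$ and at most one $b$ remain — shows that $C(\{a_i,c_j\})$ is a clique entirely contained in $B \cup (A \cup C)$ (in fact split cleanly between two of the three cliques), so both $\{a_i,c_j\}$ is dense and $\overline{C}(\{a_i,c_j\})$ is bi-cliqued. Lemma~\ref{strongly_tihany_1} then supplies a strongly Tihany brace in $G'$. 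The main obstacle I expect is the bookkeeping needed to verify that $\overline{C}(\{a_i,c_j\})$ avoids one of the three cliques in every sub-configuration of surviving $a$'s, $b$'s, and $c$'s, as well as handling possible changeable edges in $F$ — but the restriction that $G'$ is reduced, combined with Lemma~\ref{dense_thickening}, should keep this purely combinatorial.
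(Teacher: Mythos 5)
Your proof follows essentially the same route as the paper's: first handle the case where some $a_i,b_i$ both survive via a dense tri-cliqued triangle $\{a_i,b_i,c_j\}$ and Lemma~\ref{strongly_tihany_2}; then use the triad condition to force $c_i \in V(G)$ whenever $a_i \in V(G)$, rule out two surviving $a$'s via the non-reduced $W$-join $(\{a_i,a_j\},\{c_i,c_j\})$, and finish with the dense bi-cliqued brace $\{a_i,c_j\}$ and Lemma~\ref{strongly_tihany_1}. The only notable divergence is a small slip in your description of the triad containing $a_i$ (in the near-antiprismatic construction the triad is $\{a_i,b_0,c_i\}$, not $\{a_i,b_i,c_i\}$, since $a_i$ and $b_i$ are adjacent when $i\ge 1$), but this does not affect the conclusion that $c_i$ must survive, so the argument stands.
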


\begin{lemma}\label{sporadic_exceptions}
Let $G$ be an element of $\mathcal{TC}_5$ and $G'$ be a reduced thickening of $(G,F)$ for some valid $F \subseteq V(G)^2$. Then there exists either a brace $E\in V(G')$ that is strongly Tihany or a triangle $T \in V(G')$ that is strongly Tihany in $G'$.

\begin{proof}

First suppose that $G\in \mathcal{TC}_5^1$. Let $H, \{v_1,\dots,v_8\}$ be as in the definition of $\mathcal{TC}_5^1$.  If $v_4\in V(G)$ then $\{v_2,v_4\}$ is dense and bi-cliqued. Moreover $\overline C(\{v_2,v_4\})$ is bi-cliqued and thus by~\ref{dense_thickening} and~\ref{strongly_tihany_1}, there is a strongly Tihany brace in $G'$. If $v_3\in G$, then $\{v_3,v_5\}$ is dense and bi-cliqued. Moreover $\overline C(\{v_3,v_5\})$ is bi-cliqued and so by~\ref{dense_thickening} and~\ref{strongly_tihany_1}, there is a strongly Tihany brace in $G'$. So we may assume that $v_4,v_3 \not\in V(G)$. But then the triangle $T=\{v_1,v_6,v_7\}$ is dense and tri-cliqued and thus by~\ref{dense_thickening} and~\ref{strongly_tihany_2}, there exists a strongly Tihany triangle in $G'$.

We may assume now that $G\in \mathcal{TC}_5^2$. If $v_3\in G$ then $\{v_2,v_3\}$ is dense, bi-cliqued and $\overline C(\{v_2,v_3\})$ is bi-cliqued. Otherwise, $\{v_2,v_4\}$ is dense, bi-cliqued and $\overline C(\{v_2,v_4\})$ is bi-cliqued. In both cases, it follows from~\ref{dense_thickening} and~\ref{strongly_tihany_1} that there exists a strongly Tihany brace in $G'$. This proves~\ref{sporadic_exceptions}.
\end{proof}
\end{lemma}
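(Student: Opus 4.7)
The plan is to follow the same template used for the other classes in $\mathcal{TC}_i$, namely to locate, in $G$ itself, a small clique $K$ (a brace or a triangle) that is dense, and that is ``well-placed'' with respect to the three-cliqued structure so that either \ref{strongly_tihany_1} or \ref{strongly_tihany_2} applies after lifting to $G'$. Concretely: if $K$ is a brace with the additional property that both $K$ and $\overline C(K)$ meet exactly two of $A,B,C$ and every vertex of them lies in a triad, then \ref{strongly_tihany_1} will do the job; if $K$ is a triangle meeting all three of $A,B,C$ with every vertex in a triad, then \ref{strongly_tihany_2} applies. In either case, \ref{dense_thickening} lifts the dense clique $K$ in $G$ to a dense clique $K'$ of the same size in $G'$, and the strongly Tihany conclusion follows.

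Since $\mathcal{TC}_5 = \mathcal{TC}_5^1 \cup \mathcal{TC}_5^2$ consists of two sporadic families, each built on a specified vertex set $\{v_1,\ldots,v_k\}$ with a fixed adjacency pattern but with some vertices allowed to be absent, I would split the argument along these two subcases. For $G \in \mathcal{TC}_5^1$, I would first try the brace $\{v_2,v_4\}$ when $v_4 \in V(G)$, and the brace $\{v_3,v_5\}$ when $v_3 \in V(G)$; in each case a direct inspection of the definition of $\mathcal{TC}_5^1$ should show that these braces are dense and bi-cliqued, and that their closed common neighborhoods are bi-cliqued as well. If both $v_3$ and $v_4$ are absent from $V(G)$, I would look for a triangle such as $\{v_1,v_6,v_7\}$ which should now be dense and tri-cliqued, since the absence of $v_3,v_4$ collapses its common-neighborhood.

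For $G \in \mathcal{TC}_5^2$ I would likewise locate a brace containing $v_2$: either $\{v_2,v_3\}$ when $v_3 \in V(G)$, or $\{v_2,v_4\}$ otherwise. A direct check of the $\mathcal{TC}_5^2$ definition should show density, bi-cliquedness, and bi-cliquedness of the closed common neighborhood. Once the appropriate $K$ is identified in each case, applying \ref{dense_thickening} and then \ref{strongly_tihany_1} (for braces) or \ref{strongly_tihany_2} (for triangles) finishes the proof.

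The main obstacle is not logical but verificational: the entire argument rests on carefully checking, for each of the finitely many subcases of $\mathcal{TC}_5^1$ and $\mathcal{TC}_5^2$ (depending on which optional vertices are present), that the candidate clique is \emph{simultaneously} dense in $G$ and correctly placed with respect to $(A,B,C)$ so as to be bi-cliqued or tri-cliqued in the appropriate sense. Because $\mathcal{TC}_5$ is made up of sporadic, explicitly described graphs, this is essentially a mechanical inspection of adjacencies; the risk is overlooking an option for which no obvious brace or triangle works, in which case one would need to enlarge the candidate list (perhaps pairing $v_2$ with other vertices, or using a different triangle when more optional vertices are missing) before invoking \ref{dense_thickening} and \ref{strongly_tihany_1}/\ref{strongly_tihany_2}.
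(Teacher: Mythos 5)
Your proposal matches the paper's proof exactly: it uses the same case split on which of the optional vertices are present, names the same candidate cliques ($\{v_2,v_4\}$, $\{v_3,v_5\}$, and the triangle $\{v_1,v_6,v_7\}$ for $\mathcal{TC}_5^1$; $\{v_2,v_3\}$ and $\{v_2,v_4\}$ for $\mathcal{TC}_5^2$), and applies \ref{dense_thickening} together with \ref{strongly_tihany_1} or \ref{strongly_tihany_2} in the same way. The adjacency checks you flag as ``verificational'' are also left implicit in the paper, so your outline is at the same level of rigor.
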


We are now ready to prove the main result of this section.

\begin{theorem}\label{3cliqued}
Let $G$ be a three-cliqued claw-free graph such that \co. Then $G$ contains either a Tihany brace or a Tihany triangle in $G$.
\end{theorem}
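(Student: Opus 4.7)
The plan is to combine the decomposition result~\ref{clawV_4.1} with the case-by-case lemmas just established for the different types of basic terms, using the notion of ``strongly Tihany'' as a glue to lift a local Tihany clique in a single term to a Tihany clique in $G$.

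Apply~\ref{clawV_4.1} to $(G,A,B,C)$ to obtain a worn hex-chain $(G_1,A_1,B_1,C_1),\ldots,(G_n,A_n,B_n,C_n)$ for $(G,A,B,C)$ such that each $(G_i,A_i,B_i,C_i)$ is a reduced thickening of a permutation of a member of $\mathcal{TC}_1\cup\cdots\cup\mathcal{TC}_5$. If $n=1$, the conclusion is immediate from the corresponding $\mathcal{TC}_i$-lemma applied to $G=G_1$ (since a strongly Tihany clique is in particular Tihany). If $n\geq 2$, observe that by the definition of a worn hex-chain, $(G,A,B,C)$ is a worn hex-join of $(G_1,A_1,B_1,C_1)$ with the three-cliqued graph $(G',A',B',C')$ obtained by iterating worn hex-joins on $(G_2,A_2,B_2,C_2),\ldots,(G_n,A_n,B_n,C_n)$. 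Thus any strongly Tihany clique in $(G_1,A_1,B_1,C_1)$ is, by the definition of strongly Tihany, a Tihany clique in $G$.

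It therefore suffices to find a strongly Tihany brace or triangle in the first term $G_1$. Depending on which class a permutation of the underlying graph of $G_1$ belongs to, I would invoke: \ref{type_of_line_graph} for $\mathcal{TC}_1$, \ref{circular interval graphs} for $\mathcal{TC}_2$, \ref{near_antiprismatic} for $\mathcal{TC}_3$, and \ref{sporadic_exceptions} for $\mathcal{TC}_5$. Each of these lemmas produces exactly the desired strongly Tihany brace or triangle in $G_1$, which completes the argument in these four cases.

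The main obstacle is the $\mathcal{TC}_4$ case, where $G_1$ is a reduced thickening of an antiprismatic three-cliqued graph and no lemma in the present section applies directly. To handle it I would appeal to the antiprismatic results from Sections~6 and~7 (specifically \ref{orientable} and \ref{nonorientable}, via the complement), and then strengthen those conclusions in the three-cliqued setting: the clique produced must be promoted to a \emph{dense} clique that is either tri-cliqued or bi-cliqued with bi-cliqued $\overline C(K)$, so that \ref{strongly_tihany_1} or \ref{strongly_tihany_2} applies and yields a strongly Tihany brace or triangle rather than a clique of size up to $5$. This refinement---ruling out the larger cliques coming out of the antiprismatic analysis in favor of a brace or triangle of the right ``cliqued'' type---is the delicate step, and it is where the three-cliqued hypothesis does real work, since it forces the common neighborhood of a small clique to lie in a controlled union of the three parts $A,B,C$.
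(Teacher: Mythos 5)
Your handling of the $\mathcal{TC}_1,\mathcal{TC}_2,\mathcal{TC}_3,\mathcal{TC}_5$ cases matches the paper's: apply \ref{type_of_line_graph}, \ref{circular interval graphs}, \ref{near_antiprismatic}, \ref{sporadic_exceptions}, and use the ``strongly Tihany'' property to lift to $G$. There is, however, a genuine gap in your treatment of $\mathcal{TC}_4$.

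First, a structural point: you only inspect the \emph{first} term $G_1$. This forces you into your hard $\mathcal{TC}_4$ path whenever $G_1$ is a thickening of an antiprismatic three-cliqued graph, even if another term $G_i$ lies in one of the tractable classes. The paper scans \emph{all} terms and applies the appropriate lemma to whichever $G_i$ lies in $\mathcal{TC}_1\cup\mathcal{TC}_2\cup\mathcal{TC}_3\cup\mathcal{TC}_5$; one can make this work because a worn hex-chain can be re-bracketed so that any single term hex-joins with the rest (with a suitable permutation of the three parts of the ``rest''). You should do the same.

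The essential gap is the $\mathcal{TC}_4$ case itself. Your plan---invoke \ref{orientable} and \ref{nonorientable} on $G_1$ and then ``promote'' the clique to a dense bi-cliqued or tri-cliqued one so \ref{strongly_tihany_1} or \ref{strongly_tihany_2} applies---is not the right move and you flag it yourself as the delicate step. Those two lemmas only guarantee a Tihany clique of size up to $4$ or $5$, not a brace or triangle, and there is no argument given for why such a clique can be ``promoted'' in a three-cliqued thickening. The paper avoids this entirely: once \emph{every} term is (a thickening of) a member of $\mathcal{TC}_4$, it observes that a worn hex-chain of three-cliqued antiprismatic graphs is itself a three-cliqued antiprismatic graph, so $G$ as a whole is a reduced thickening of one. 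It then applies the dedicated Lemma~\ref{3cliques_antiprism} (for complements of 3-coloured prismatic graphs), which already yields a Tihany brace or triangle. That lemma---rather than \ref{orientable}/\ref{nonorientable}---is the tool you should be reaching for; with it, the ``promotion'' step is unnecessary, and without it, the gap in your argument is real.
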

\begin{proof}
By~\ref{clawV_4.1}, there exist $(G_i,A_i,B_i,C_i)$, for $i=1,\ldots,n$, such that the sequence $(G_i,A_i,B_i,C_i)$ $(i=1,\ldots,n)$ is a worn hex-chain for $(G,A,B,C)$ and such that $(G_i,A_i,B_i,C_i)$ is a reduced thickening of a permutation of a member of one of $\mathcal{TC}_1,\ldots,\mathcal{TC}_5$. If there exists $i\in\{1,\ldots,n\}$ such that $(G_i,A_i,B_i,C_i)$ is a reduced thickening of a permutation of a member of $\mathcal{TC}_1$,  $\mathcal{TC}_2$,  $\mathcal{TC}_3$, or $\mathcal{TC}_5$, then by~\ref{type_of_line_graph}, ~\ref{circular interval graphs}, ~\ref{near_antiprismatic}, or \ref{sporadic_exceptions} (respectively), there is a strongly Tihany brace or a strongly Tihany triangle in $G_i$, and thus there is a Tihany brace or a Tihany triangle in $G$.  Thus it follows that $(G_i,A_i,B_i,C_i)$ is a reduced thickening of a member of $\mathcal{TC}_4$ for all $i=1,\ldots,n$. Hence $G$ is a reduced thickening of a three-cliqued antiprismatic graph. By~\ref{3cliques_antiprism}, there exists a Tihany brace or triangle in $G$. This proves~\ref{3cliqued}
\end{proof}

\section{Non-trivial Strip Structures}
In this section we prove~\ref{main} for graphs $G$ that admit non-trivial strip structures and appear in \cite{claws5}.

Let $(J,Z)$ be a strip. We say that $(J,Z)$ is a \emph{line graph strip} if 
$|V(J)|=3$, $|Z|=2$ and $Z$ is complete to $V(J) \setminus Z$.

The following two lemmas appear in \cite{twothirdschi}.

\begin{lemma}\label{1-joins}
Suppose that $G$ admits a nontrivial strip-structure such that $|Z|=1$ for some strip $(J,Z)$ of $(H,\eta)$.  Then either $G$ is a clique or $G$ admits a clique cutset.
\end{lemma}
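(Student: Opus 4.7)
The plan is to use the clique associated with the unique vertex $h$ of $\overline{F}$, as furnished by (SD2), as the source of a clique cutset (unless $G$ itself is already a clique). Fix a strip $(J,Z)$ of $(H,\eta)$ at some $F \in E(H)$ with $|Z|=1$, so that $\overline{F}=\{h\}$ for a single $h \in V(H)$, and set $K = \bigcup_{F' \in E(H),\, h \in \overline{F'}} \eta(F',h)$. By (SD2), $K$ is a clique in $G$. The key observation is that if $v \in \eta(F)$ is adjacent in $G$ to $v' \in \eta(F')$ for some $F' \neq F$, then (SD3) gives $h' \in \overline{F} \cap \overline{F'}$ with $v \in \eta(F,h')$ and $v' \in \eta(F',h')$; since $\overline{F}=\{h\}$, we must have $h'=h$. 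Hence every edge from $\eta(F)$ to $V(G) \setminus \eta(F)$ has one endpoint in $\eta(F,h)$ and the other in $K \setminus \eta(F,h) \subseteq K$.

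I would now split into two cases. Suppose first that $\eta(F) \setminus \eta(F,h) \neq \emptyset$, and pick $v$ in this set. By the observation, $v$ has no neighbors outside $\eta(F)$ at all. The nontriviality assumption $|E(H)| \geq 2$ combined with the nonemptiness and disjointness of the sets $\eta(F')$ from (SD1) forces $V(G) \setminus \eta(F) \neq \emptyset$, so removing the clique $\eta(F,h) \subseteq K$ leaves $v$ in a component disjoint from $V(G) \setminus \eta(F)$, and $\eta(F,h)$ is the desired clique cutset. Suppose instead that $\eta(F) = \eta(F,h)$, so that $\eta(F) \subseteq K$. If $V(G) = K$, then $G$ is a clique and we are done. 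Otherwise, set $K' = K \setminus \eta(F,h) = \bigcup_{F' \neq F,\, h \in \overline{F'}} \eta(F',h)$, which is a clique as a subset of $K$. By the observation, every edge from $\eta(F)$ to its complement has its outside endpoint in $K'$, so removing $K'$ separates the nonempty set $\eta(F)$ from the nonempty set $V(G) \setminus K$, yielding a clique cutset.

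The main work is just bookkeeping the definitions and invoking (SD1), (SD3), and $|E(H)| \geq 2$ at the right moments to guarantee that both sides of the proposed separation are nonempty. The one slightly delicate point is the degenerate subcase in which $h$ is incident only with $F$, so that $K' = \emptyset$: the observation still shows $\eta(F)$ has no neighbors in $V(G) \setminus \eta(F)$, and hence $G$ is either equal to $\eta(F)$ (forcing $|E(H)|=1$, contradicting nontriviality) or disconnected (in which case the empty clique trivially separates two components). Either way the dichotomy ``clique or clique cutset'' is preserved.
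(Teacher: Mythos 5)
Your proof is correct, but note that the paper does not actually prove this lemma: it is cited from \cite{twothirdschi}, so there is no in-paper proof to compare against. Your argument proceeds directly from the strip-structure axioms and is sound. The core observation --- that with $\overline{F}=\{h\}$, axiom (SD3) forces every edge leaving $\eta(F)$ to have its $\eta(F)$-endpoint in $\eta(F,h)$ and its other endpoint in the clique $K$ guaranteed by (SD2) --- immediately yields a clique separator in both of your cases, with (SD1) and the nontriviality $|E(H)|\geq 2$ ensuring both sides of the separation are nonempty. You have also correctly flagged and disposed of the degenerate subcase $K'=\emptyset$; if one additionally assumes $G$ connected (which holds wherever the lemma is invoked in this paper, via Theorem~\ref{structure}), that subcase simply cannot occur, making the bookkeeping even cleaner.
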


\begin{lemma}\label{line strips}
Let $G$ be a graph that admits a nontrivial strip-structure $(H,\eta)$ such that for every $F \in E(H)$, the strip of $(H,\eta)$ at $F$ is a line graph strip.  Then $G$ is a line graph.
\end{lemma}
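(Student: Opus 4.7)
The plan is to realize $G$ as the line graph of the multigraph obtained by viewing $H$ as a graph in which each $F\in E(H)$ is an edge with endpoint set $\overline{F}$. First I would unpack what ``line graph strip'' forces on the data at each strip. Since $|V(J)|=3$, $|Z|=2$, and $V(J)$ is the disjoint union of $\eta(F)$ with the $|Z|$ new boundary vertices in $Z$, it follows that $|\eta(F)|=1$ and $|\overline{F}|=2$; write $\eta(F)=\{x_F\}$ and $\overline{F}=\{h_1,h_2\}$. The condition that $Z$ is complete to $V(J)\setminus Z$, combined with the fact that the new vertex $v_i$ is by construction complete to $\eta(F,h_i)$ and anticomplete to everything else in $J$, forces $\eta(F,h_i)=\{x_F\}$ for $i=1,2$; indeed $\eta(F,h_i)\subseteq\eta(F)=\{x_F\}$ cannot be empty, for otherwise $v_i$ would not be adjacent to $x_F$ in $J$.

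Next I would observe that, by (SD1), the map $F\mapsto x_F$ is a bijection from $E(H)$ onto $V(G)$. Viewing $H$ as an ordinary multigraph in which each $F\in E(H)$ is an edge joining the two distinct vertices of $\overline{F}$, my claim is that this bijection is an isomorphism $G\cong L(H)$.

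The verification splits into two directions. For ``share an endpoint implies adjacent'', suppose $F_1\neq F_2$ have a common $h\in\overline{F_1}\cap\overline{F_2}$. By the first step, $x_{F_1}\in\eta(F_1,h)$ and $x_{F_2}\in\eta(F_2,h)$, and (SD2) says that the union $\bigcup_{F:\, h\in\overline{F}}\eta(F,h)$ is a clique of $G$, so $x_{F_1}x_{F_2}\in E(G)$. For the converse, if $x_{F_1}x_{F_2}\in E(G)$ with $F_1\neq F_2$, then (SD3) directly supplies a common $h\in\overline{F_1}\cap\overline{F_2}$. Combining the two directions yields $G\cong L(H)$, so $G$ is a line graph.

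The whole argument is essentially definition-chasing, so I do not anticipate any real obstacle. The step requiring the most care is the opening one, where the ``$Z$ complete to $V(J)\setminus Z$'' clause of the definition of a line graph strip is genuinely needed to rule out the a priori possibility $\eta(F,h_i)=\emptyset$; without that clause the map $F\mapsto x_F$ would still be a bijection of vertex sets, but (SD2) would fail to force enough edges of $G$.
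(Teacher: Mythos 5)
Your proof is correct. Note that the paper does not actually supply its own proof of this lemma---it is stated as a result imported from \cite{twothirdschi}---so there is no in-paper argument to compare against; but your definition-chase is the natural one and it works. From $|V(J)|=3$ and $|Z|=2$ you correctly extract $|\eta(F)|=1$ and $|\overline F|=2$, the ``$Z$ complete to $V(J)\setminus Z$'' clause is indeed exactly what forces $\eta(F,h_i)=\{x_F\}$ rather than $\emptyset$, (SD1) gives the bijection $F\mapsto x_F$, and (SD2)/(SD3) supply the two directions of the adjacency equivalence yielding $G\cong L(H)$. The one thing worth making explicit is that ``line graph'' here must be read as the line graph of a loopless multigraph: the line-graph-strip condition forces $|\overline F|=2$, so no loops arise, but parallel edges in $H$ are permitted and are handled correctly by (SD2); this reading is consistent with how the lemma is used downstream, since the cited Erd\"os--Lov\'asz--Tihany result for line graphs \cite{line} is stated for multigraphs.
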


We now use these lemmas to prove the main result of this section.

\begin{theorem}\label{strips}
Let $G$ be a claw-free graph with \co~that is a minimal counterexample to~\ref{main}.  Then $G$ does not admit a nontrivial strip-structure $(H,\eta)$ such that for each strip $(J,Z)$ of $(H,\eta)$, $1 \leq |Z| \leq 2$, and if $|Z|=2$ then either $|V(J)|=3$ and $Z$ is complete to $V(J) \setminus Z$,  or 
$(J,Z)$ is a member of  $\mathcal{Z}_1  \cup \mathcal{Z}_2 \cup 
\mathcal{Z}_3 \cup \mathcal{Z}_4 \cup \mathcal{Z}_5$.
\end{theorem}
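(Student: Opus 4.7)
The plan is a proof by contradiction: suppose such a strip structure $(H,\eta)$ exists on $G$, and then exhibit a Tihany clique of size at most $5$, contradicting that $G$ is a counterexample to Theorem~\ref{main}.

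First I would dispose of the two degenerate cases. If some strip $(J,Z)$ has $|Z|=1$, then by Lemma~\ref{1-joins} either $G$ is a clique, or $G$ admits a clique cutset. Since $\chi(G)>\omega(G)$, $G$ is not a clique, and then Lemma~\ref{cliquecutset} produces a Tihany brace, contradicting the minimality of $G$. Hence I may assume $|Z|=2$ for every strip of $(H,\eta)$. Next, if every strip is a line-graph strip (i.e.\ $|V(J)|=3$ and $Z$ is complete to $V(J)\setminus Z$), then by Lemma~\ref{line strips} $G$ is a line graph; but the Erd\H{o}s--Lov\'asz Tihany conjecture is known to hold for line graphs (cited in the introduction), so $G$ would not be a counterexample. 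Thus some strip $(J,Z)$ lies in $\mathcal{Z}_1 \cup \mathcal{Z}_2 \cup \mathcal{Z}_3 \cup \mathcal{Z}_4 \cup \mathcal{Z}_5$.

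The remaining work is a case analysis over these five strip classes. The key structural fact I would rely on is that by (SD3), any vertex of $\eta(F)$ lying outside $\eta(F,h_1)\cup \eta(F,h_2)$ has all of its $G$-neighbors inside $\eta(F)$; consequently, the common neighborhood in $G$ of any ``interior'' clique $K \subseteq \eta(F)$ is computed entirely inside the strip $J$. The plan is, for each $\mathcal{Z}_i$, to locate a small clique $K$ of size between $2$ and $5$ inside the strip, disjoint from the attachment sets $\eta(F,h_1),\eta(F,h_2)$, whose $J$-neighborhood is either a clique (so $K$ is dense in $G$ and Lemma~\ref{W-1} applies), an antimatching (so Lemma~\ref{lemma:goodclique} applies), or consists of two disjoint closed neighborhoods covering no triad (so Lemma~\ref{lemma:disjoint-neighborhoods} applies).

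The main obstacle is the case analysis itself: each $\mathcal{Z}_i$ is a specific construction described in Appendix~\ref{appendix:strips}, and one must identify the right interior clique in every instance. For strip classes arising from long circular interval constructions I expect the argument to mirror Theorem~\ref{longcircularinterval} and to yield a dense brace. For strip classes arising from three-cliqued or antiprismatic pieces I would apply (an adaptation of) Theorem~\ref{3cliqued}, Lemma~\ref{orientable}, Lemma~\ref{nonorientable}, or Lemma~\ref{mainnot3sub} to the strip viewed as a claw-free graph in its own right, being careful that the attachment vertices of $Z$ do not interfere, so that the resulting Tihany clique sits in the interior of the strip and remains Tihany in the full graph $G$. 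In each case the outcome is a Tihany clique of size at most $5$ in $G$, the desired contradiction.
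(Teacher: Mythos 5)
Your first two reductions (disposing of $|Z|=1$ strips via Lemma~\ref{1-joins} and~\ref{cliquecutset}, and disposing of the all-line-graph case via Lemma~\ref{line strips} and the known result on line graphs) match the paper exactly. But after that your plan diverges, and the divergence introduces a genuine gap.

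You propose to handle the remaining case by a flat case analysis over the five strip classes $\mathcal{Z}_1,\ldots,\mathcal{Z}_5$, hoping in each case either to find an \emph{interior} clique of the strip with a locally checkable property (dense, antimatching, disjoint neighborhoods) or to ``adapt'' one of the global Tihany theorems (\ref{3cliqued}, \ref{orientable}, \ref{nonorientable}, \ref{mainnot3sub}) to the strip $J$ viewed as a free-standing graph. The second half of this plan does not work as stated: being Tihany is a global property of $G$ ($\chi(G\setminus K)\ge\chi(G)-|K|+1$), whereas those theorems, when applied to $J$ alone, produce cliques that are Tihany in $J$ with respect to $\chi(J)$, not in $G$ with respect to $\chi(G)$. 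Unlike \ref{W-1}, \ref{lemma:goodclique}, or \ref{lemma:disjoint-neighborhoods}, these theorems do not reduce Tihany-ness to a local neighborhood condition, so their conclusions do not transfer from a proper induced subgraph $J$ to the ambient graph $G$. The first half of the plan also runs into trouble because strips can be too small to contain any ``interior'' clique at all (e.g.\ in $\mathcal{Z}_1$ the strip may consist of just the two attachment vertices, and in $\mathcal{Z}_2$--$\mathcal{Z}_5$ the attachment sets may already cover the whole strip), so the local-property route is not always available.

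The paper's proof sidesteps both issues with a different split. Fixing a non-line-graph strip $(J_1,Z_1)$, define $A_1,B_1$ as the attachment sets inside $J_1$, $A_2,B_2$ as their outside neighborhoods, $C_1$ the interior of the strip, and $C_2$ the rest of $G$. If $A_2\ne B_2$, say $v\in A_2\setminus B_2$ and $w\in A_1\setminus B_1$; then (SD2)/(SD3) force $C(\{v,w\})$ to lie inside the clique around the shared attachment node, so $\{v,w\}$ is dense and \ref{W-1} finishes --- no case analysis over $\mathcal{Z}_i$ is needed at all. If $A_2=B_2$ and $C_2\ne\emptyset$, then $A_2$ is a clique cutset and \ref{cliquecutset} finishes. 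Only in the degenerate case $A_2=B_2$, $C_2=\emptyset$ does the paper examine the five strip classes, and there the point is precisely that $G$ itself (not a proper subgraph) becomes a fuzzy long circular interval graph, a union of three cliques, or a fuzzy antiprismatic graph, so the global theorems apply directly to $G$ and the Tihany-ness transfer problem never arises. Your proposal is missing this $A_2$-vs-$B_2$ dichotomy, which is what makes the argument go through without the transfer difficulty.
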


\begin{proof}
Suppose that $G$ admits a nontrivial strip-structure $(H,\eta)$ such that for each strip $(J,Z)$ of $(H,\eta)$, $1 \leq |Z| \leq 2$.
Further suppose that $|Z|=1$ for some strip $(J,Z)$.  Then by~\ref{1-joins} either $G$ is a clique or $G$ admits a clique cutset; in the former case 
$G$ does not satisfy \co, and in the latter case~\ref{strips}
follows from~\ref{cliquecutset}.  Hence we may assume that  
$|Z|=2$ for all strips $(J,Z)$.

If all the strips of $(H,\eta)$ are line graph strips, then
by~\ref{line strips}, $G$ is a line graph and the result follows
from \cite{quasi-line}.  So we may assume that some strip $(J_1,Z_1)$
is not a line graph strip.  Let $Z_1=\{a_1,b_1\}$.  Let
$A_1=N_{J_1}(a_1)$, $B_1=N_{J_1}(b_1)$, $A_2=N_G(A_1) \setminus
V(J_1)$, and $B_2=N_G(B_1) \setminus V(J_1)$.  Let $C_1 =
V(J_1) \setminus (A_1 \cup B_1)$ and $C_2= V(G) \setminus (V(J_1)\cup
A_2 \cup B_2)$.  Then $V(G) = A_1 \cup B_1 \cup C_1 \cup A_2 \cup
B_2 \cup C_2$.

\begin{claim}\label{claimstrips}
\noindent If $C_2=\emptyset$ and $A_2=B_2$, then there is a Tihany clique $K$ in $G$ with $|K|\leq 5$. 
\end{claim}
Note that $V(G)=A_1 \cup B_1 \cup C_1 \cup A_2$.  Since $|Z_1|=2$ and $(J_1,Z_1)$ is not a line graph strip, it follows that $(J_1,Z_1)$ is a member of 
$\mathcal{Z}_1  \cup \mathcal{Z}_2 \cup 
\mathcal{Z}_3 \cup \mathcal{Z}_4 \cup \mathcal{Z}_5$.
We consider the cases separately:
\begin{enumerate}
\item If $(J_1,Z_1)$ is a member of $\mathcal{Z}_1$, then $J_1$ is a fuzzy linear interval graph and so $G$ is a fuzzy long circular interval graph and Theorem~\ref{strips} follows from~\cite{quasi-line}.

\item If $(J_1,Z_1)$ is a member of $\mathcal{Z}_2,\mathcal{Z}_3,$ or $\mathcal{Z}_4$.  In all of these cases, $A_1,B_1,$ and $C_1$ are all cliques and so $V(G)$ is the union of three cliques, namely $A_1 \cup A_2, B_1$, and $C_1$.  Hence, by~\ref{3cliqued}, there exists a Tihany clique $K$ with $|K|\leq 5$.

\item If $(J_1,Z_1)$ is a member of $\mathcal{Z}_5$.  Let $v_1,\ldots,v_{12},X,H,H', F$ be as in the definition of $\mathcal{Z}_5$ and for $1 \leq i \leq 12$ let $X_{v_i}$ be as in the definition of a thickening.  Then $A_2$ is complete to $X_{v_1} \cup X_{v_2} \cup X_{v_4} \cup X_{v_5}$.  Let $H''$ be the graph obtained from $H'$
by adding a new vertex $a_2$, adjacent to $v_1,v_2,v_4$ and $v_5$. Then
$H''$ is an antiprismatic graph.  Moreover, no triad of $H''$
contains $v_9$ or $v_{10}$. Thus the
pair $(H',F)$ is antiprismatic, and $G$ is a thickening of $(H',F)$, so~\ref{strips} follows from~\ref{orientable} and~\ref{nonorientable}.

\end{enumerate}
This proves~\cref{claimstrips}.\\

By~\cref{claimstrips}, we may assume that either $C_2 \neq \emptyset$, or
$A_2 \neq B_2$. Suppose that $A_2=B_2$.  Then since $C_2 \neq \emptyset$ it follows that $A_2$ is a clique cutset of $G$ and the result follows from~\ref{cliquecutset}.  Hence, we may assume that $A_2 \neq B_2$ and without loss of generality we may assume that $A_2 \setminus B_2 \neq \emptyset$.  Let $v \in A_2 \setminus B_2$ and let $w \in A_1 \setminus B_1$.  Then $E=\{v,w\}$ is dense and~\ref{strips} follows from~\ref{W-1}.
\end{proof}

\section{Proof of the Main Theorem}
\label{sec:main_proof}We can now prove the main theorem.

\begin{proof}[Proof of~\ref{main}]

Let $G$ be a claw-free graph with \co and suppose that there does not exist a clique $K$ in $G$ with $|K|\leq 5$ such that  $\chi(G \backslash K) > \chi(G) - |K|$.  By~\ref{strips} and~\ref{structure}, it follows that either $G$ is a member of $\mathcal{T}_1\cup\mathcal{T}_2\cup\mathcal{T}_3$ or $V(G)$ is the union of three cliques. By~\ref{icosa}, it follows that $G$ is not a member of $\mathcal{T}_1$. By~\ref{longcircularinterval}, it follows that $G$ is not a member of $\mathcal{T}_2$. By~\ref{orientable} and~\ref{nonorientable}, we deduce that $G$ is not a member $\mathcal{T}_3$. Hence, it follows that $V(G)$ is the union of three cliques. But by~\ref{3cliqued}, it follows that there is a Tihany brace or triangle in $G$, a contradiction. This proves~\ref{main}.
\end{proof}

\label{totalpag}

\newpage
\appendix
\section{Orientable prismatic graphs}
\label{app:orientable}
\begin{itemize}
\item $\mathcal Q_0$ is the class of all 3-coloured graphs $(G,A,B,C)$ such that $G$ has no triangle.
\item $\mathcal Q_1$ is the class of all 3-coloured graphs $(G,A,B,C)$ where $G$ is isomorphic to the line graph of $K_{3,3}$.
\item $\mathcal Q_2$ is the class of all canonically-coloured path of triangles graphs.
\end{itemize}
\begin{itemize}
\item \textbf{Path of triangles.} A graph $G$ is a \textit{path of triangles} graph if for some integer $n \geq 1$ there are pairwise disjoint stable subsets $X_1,\ldots,X_{2n+1}$ of $V(G)$ with union $V (G)$, satisfying the following conditions (P1)-(P7).
\begin{itemize}
\item[(P1)] For $1 \leq i \leq n$, there is a nonempty subset $\hat X_{2i} \subseteq X_{2i}; |\hat X_2| = |\hat X_{2n}| = 1$, and for $0 < i < n$, at least one of $\hat X_{2i},\hat X_{2i+2}$ has cardinality 1.
\item[(P2)] For $1 \leq i < j \leq 2n + 1$
\begin{enumerate}
\item[(1)] if $j-i = 2$ modulo 3 and there exist $u \in X_i$ and $v \in X_j$, nonadjacent, then either $i, j$ are
odd and $j = i + 2$, or $i, j$ are even and $u \notin \hat X_i$ and $v \notin \hat X_j$;
\item[(2)] if $j - i \neq 2$ modulo 3 then either $j = i + 1$ or $X_i$ is anticomplete to $X_j$.
\end{enumerate}
\item[(P3)] For $1 \leq i \leq n+ 1$, $X_{2i-1}$ is the union of three pairwise disjoint sets $L_{2i-1},M{2i-1},R{2i-1}$, where
$L_1 = M_1 = M_{2n+1} = R_{2n+1} = \emptyset$.
\item[(P4)] If $R_1 = \emptyset$ then $n \geq 2$ and $|\hat X_4| > 1$, and if $L_{2n+1} = \emptyset$ then $n \geq 2$ and $|\hat X_{2n-2}| > 1$.
\item[(P5)] For $1 \leq i \leq n$, $X_{2i}$ is anticomplete to $L_{2i-1}\cup R_{2i+1}$; $X_{2i}\backslash \hat X_{2i}$ is anticomplete to $M_{2i-1}\cup M_{2i+1}$; and every vertex in $X_{2i} \backslash \hat X_{2i}$ is adjacent to exactly one end of every edge between $R_{2i-1}$ and
$L_{2i+1}$.
\item[(P6)] For $1 \leq i \leq n$, if $| \hat X_{2i}| = 1$, then
\begin{enumerate}
\item[(1)] $R_{2i-1},L_{2i+1}$ are matched, and every edge between $M_{2i-1} \cup R_{2i-1}$ and $L_{2i+1}\cup  M_{2i+1}$ is
between $R_{2i-1}$ and $L_{2i+1}$;
\item[(2)] the vertex in $\hat X_{2i}$ is complete to $R_{2i-1} \cup M_{2i-1} \cup L_{2i+1} \cup M_{2i+1}$;
\item[(3)] $L_{2i-1}$ is complete to $X_{2i+1}$ and $X_{2i-1}$ is complete to $R_{2i+1}$
\item[(4)] if $i > 1$ then $M_{2i-1}, \hat X_{2i-2}$ are matched, and if $i < n$ then $M_{2i+1},\hat X_{2i+2}$ are matched.
\end{enumerate}
\item[(P7)] For $1 < i < n$, if $|\hat X_{2i}| > 1$ then
\begin{itemize}
\item[(1)] $R_{2i-1} = L_{2i+1} = \emptyset$;
\item[(2)] if $u \in X_{2i-1}$ and $v \in X_{2i+1}$, then $u, v$ are nonadjacent if and only if they have the same
neighbour in $\hat X_{2i}$.
\end{itemize}
\end{itemize}
Let $A_k=\bigcup(X_i:1\leq i \leq 2n+1 \makebox{ and } i=k\mod 3)\ (k=0,1,2)$. Then $(G,A_1,A_2,A_3)$ is a \textit{canonically-coloured path of triangles graphs}.
\item \textbf{Cycle of triangles.} A graph $G$ is a \textit{cycle of triangles} graph if for some integer $n \geq 5$ with $n = 2$ modulo
3, there are pairwise disjoint stable subsets $X_1,\ldots,X_{2n}$ of $V(G)$ with union $V(G)$, satisfying the
following conditions (C1)-(C6) (reading subscripts modulo $2n$):
\begin{itemize}
\item[(C1)] For $1 \leq i \leq n$, there is a nonempty subset $\hat X_{2i} \subseteq X_{2i}$, and at least one of $\hat X_{2i}, \hat X_{2i+2}$ has cardinality 1.
\item[(C2)] For $i \in \{1,\ldots,2n\}$ and all $k$ with $2 \leq k \leq 2n - 2$, let $j \in \{1,\ldots, 2n\}$ with $j = i + k$ modulo
2n:
\begin{itemize}
\item[(1)] if $k = 2$ modulo 3 and there exist $u \in X_{i}$ and $v \in X_j$ , nonadjacent, then either $i, j$ are
odd and $k \in \{2, 2n - 2\}$, or $i, j$ are even and $u \notin \hat X_i$ and $v \notin \hat X_j$ ;
\item[(2)] if $k \neq 2$ modulo 3 then $X_i$ is anticomplete to $X_j$.
\end{itemize}
(Note that $k = 2$ modulo 3 if and only if $2n - k = 2$ modulo 3, so these statements are
symmetric between $i$ and $j$.)
\item[(C3)] For $1 \leq i \leq n + 1$, $X_{2i-1}$ is the union of three pairwise disjoint sets $L_{2i-1},M_{2i-1},R_{2i-1}$.
\item[(C4)] For $1 \leq  i \leq  n$, $X_{2i}$ is anticomplete to $L_{2i-1}\cup R_{2i+1}$; $X_{2i} \backslash \hat X_{2i}$ is anticomplete to $M_{2i-1} \cup M_{2i+1}$; and every vertex in $X_{2i} \backslash \hat X_{2i}$ is adjacent to exactly one end of every edge between $R_{2i-1}$ and
$L_{2i+1}$.
\item[(C5)] For $1 \leq  i \leq  n$, if $| \hat X_{2i}| = 1$, then
\begin{itemize}
\item[(1)] $R_{2i-1},L_{2i+1}$ are matched, and every edge between $M_{2i-1} \cup  R_{2i-1}$ and $L_{2i+1} \cup  M_{2i+1}$ is
between $R_{2i-1}$ and $L_{2i+1}$;
\item[(2)] the vertex in $\hat X_{2i}$ is complete to $R_{2i-1} \cup M_{2i-1} \cup  L_{2i+1} \cup M_{2i+1}$;
\item[(3)] $L_{2i-1}$ is complete to $X_{2i+1}$ and $X_{2i-1}$ is complete to $R_{2i+1}$
\item[(4)] $M_{2i-1}, \hat X_{2i-2}$ are matched and $M_{2i+1}, \hat X_{2i+2}$ are matched.
\end{itemize}
\item[(C6)] For $1 \leq  i \leq  n$, if $|\hat X_{2i}| > 1$ then
\begin{itemize}
\item[(1)] $R_{2i-1} = L_{2i+1} = \emptyset$;
\item[(2)] if $u \in X_{2i-1}$ and $v \in X_{2i+1}$, then $u,v$ are nonadjacent if and only if they have the same
neighbour in $\hat X_{2i}$.
\end{itemize}
\end{itemize}
\item \textbf{Ring of five.} Let $G$ be a graph with $V(G)$ the union of the disjoint sets $W = \{a_1,\ldots,a_5, b_1,\ldots,b_5\}$ and $V_0, V_1,\ldots,V_5$.
Let adjacency be as follows (reading subscripts modulo 5). For $1 \leq i \leq 5$, $\{a_i, a_{i+1}; b_{i+3}\}$ is a triangle,
and $a_i$ is adjacent to $b_i$; $V_0$ is complete to $\{b_1,\ldots,b_5\}$ and anticomplete to $\{a_1,\ldots,a_5\}$; $V_0, V_1,\ldots, V_5$
are all stable; for $i = 1, \ldots,5$, $V_i$ is complete to $\{a_{i-1}, b_i, a_{i+1}\}$ and anticomplete to the remainder of
$W$; $V_0$ is anticomplete to $V_1\cup\cdots\cup V_5$; for $1 \leq i \leq 5$ $V_i$ is anticomplete to $V_{i+2}$; and the adjacency
between $V_i, V_{i+1}$ is arbitrary. We call such a graph a \textit{ring of five}.

\item \textbf{Mantled $L(K_{3,3})$.} Let $G$ be a graph with $V(G)$ the union of seven sets
$$W = \{a_i^j : 1 \leq i, j \leq 3\}, V^1, V^2, V^3, V_1, V_2, V_3,$$
with adjacency as follows. For $1 \leq i, j, i', j' \leq 3$, $a_i^j$ and $a_{i'}^{j'}$ are adjacent if and only if $i'\neq i$ and $j' \neq j$.
For $i = 1, 2, 3, V^i, V_i$ are stable; $V^i$ is complete to $\{a_i^1, a_i^2, a_i^3\}$, and anticomplete to the remainder of $W$; and $V_i$ is complete to $\{a_1^i, a_2^i, a_3^i\}$ and anticomplete to the remainder of $W$. Moreover, $V^1\cup V^2\cup V^3$
is anticomplete to $V_1 \cup V_2 \cup V_3$, and there is no triangle included in $V^1 \cup V^2 \cup V^3$ or in $V_1 \cup V_2 \cup V_3$.
We call such a graph G a \textit{mantled $L(K_{3,3})$}.
\end{itemize}
\section{Non-orientable prismatic graphs}
\label{app:non-orientable}
\begin{itemize}
\item \textbf{A rotator}. Let $G$ have nine vertices $v_1, v_2,\ldots, v_9$, where $\{v_1, v_2, v_3\}$ is a triangle, $\{v_4, v_5, v_6\}$ is complete to $\{v_7, v_8, v_9\}$, and for $i=1,2,3$, $v_i$ is adjacent to $v_{i+3}$, $v_{i+6}$, and there are no other edges. We call $G$ a \textit{rotator}.
\item \textbf{A twister}. Let G have ten vertices $u_1, u_2, v_1,\ldots, v_8$ , where $u_1, u_2$ are adjacent, for $i=1,\ldots, 8$ $v_i$ is adjacent to $v_{i-1}, v_{i+1}, v_{i+4}$ (reading subscripts modulo $8$), and for $i= 1,2$, $u_i$ is adjacent to $v_i, v_{i+2}, v_{i+4}, v_{i+6}$, and there are no other edges. We call $G$ a \textit{twister} and $u_1$, $u_2$ is the \textit{axis} of the twister.
\end{itemize}
\section{Three-cliqued graphs}
\label{app:3cliqued}

\begin{itemize}
\item \textbf{A type of line trigraph}. Let $v_1, v_2, v_3$ be distinct nonadjacent vertices of a graph $H$, such that every edge of $H$ is incident with one of $v_1, v_2, v_3$. Let $v_1, v_2, v_3$ all have degree at least three, and let all other vertices of $H$ have degree at least one. Moreover, for all distinct $i,j \in \{1,2,3\}$, let there be at most one vertex different from $v_1, v_2, v_3$ that is adjacent to $v_i$ and not to $v_j$ in $H$. Let $A,B,C$ be the sets of edges of $H$ incident with $v_1, v_2, v_3$ respectively, and let $G$ be a line trigraph of $H$. Then $(G, A,B,C)$ is a three-cliqued claw-free trigraph; let $\mathcal{TC}_1$ be the class of all such three-cliqued trigraphs such that every vertex is in a triad.

\item \textbf{Long circular interval trigraphs}. Let $G$ be a long circular interval trigraph, and let $\Sigma$ be a circle with $V(G)\subseteq \Sigma$, and $F_1,\ldots,F_k\subseteq \Sigma$, as in the definition of long circular interval trigraph. By a \textit{line} we mean either a subset $X\subseteq V(G)$ with $|X|\leq 1$, or a subset of some $F_i$ homeomorphic to the closed unit interval, with both end-points in $V (G)$. Let $L_1,L_2,L_3$ be pairwise disjoint lines with $V(G)\subseteq L_1\cup L_2 \cup L_3$; then $(G, V (G)\cap L_1, V(G)\cap L_2, V (G)\cap L_3)$ is a three-cliqued claw-free trigraph. We denote by $\mathcal{TC}_2$ the class of such three-cliqued trigraphs with the additional property that every vertex is in a triad.

\item \textbf{Near-antiprismatic trigraphs}. Let $H$ be a near-antiprismatic trigraph, and let $A,B,C,X$
be as in the deffnition of near-antiprismatic trigraph. Let $A' = A \backslash X$ and define $B',C'$
similarly; then $(H,A',B',C')$ is a three-cliqued claw-free trigraph. We denote by $\mathcal{TC}_3$ the class
of all three-cliqued trigraphs with the additional property that every vertex is in a triad.

\item \textbf{Antiprismatic trigraphs}. Let $G$ be an antiprismatic trigraph and let $A,B,C$ be a partition
of $V(G)$ into three strong cliques; then $(G,A,B,C)$ is a three-cliqued claw-free trigraph. We
denote the class of all such three-cliqued trigraphs by $\mathcal{TC}_4$. (In \cite{claws1} Chudnovsky and Seymour described explicitly all
three-cliqued antiprismatic graphs, and their "changeable" edges; and this therefore provides
a description of the three-cliqued antiprismatic trigraphs.) Note that in this case there may be
vertices that are in no triads.

\item \textbf{Sporadic exceptions.}
\begin{itemize}
\item Let $H$ be the trigraph with vertex set $\{v_1,\ldots,v_8\}$ and adjacency as follows: $v_i, v_j$ are strongly adjacent for $1 \leq i < j \leq 6$ with $j - i \leq 2$; the pairs $v_1v_5$ and $v_2v_6$ are strongly antiadjacent; $v_1, v_6, v_7$ are pairwise strongly adjacent, and $v_7$ is strongly antiadjacent to $v_2, v_3, v_4, v_5$; $v_7, v_8$ are strongly adjacent, and $v_8$ is strongly antiadjacent to  $v_1,\ldots,v_6$; the pairs $v_1v_4$ and $v_3v_6$ are semiadjacent, and $v_2$ is antiadjacent to $v_5$. Let $A = \{v_1, v_2, v_3\},B = \{v_4, v_5, v_6\}$ and $C = \{v_7, v_8\}$. Let $X\subseteq \{v_3, v_4\}$; then $(H \backslash X,A \backslash X,B \backslash X,C)$ is a three-cliqued claw-free trigraph, and all its vertices are in triads.
\item Let $H$ be the trigraph with vertex set $\{v_1,\ldots, v_9\}$, and adjacency as follows: the sets $A = \{v_1, v_2\}$, $B = \{v_3, v_4, v_5, v_6, v_9\}$ and $C = \{v_7, v_8\}$ are strong cliques; $v_9$ is strongly adjacent to $v_1, v_8$ and strongly antiadjacent to $v_2, v_7$; $v_1$ is strongly antiadjacent to $v_4, v_5, v_6, v_7$, semiadjacent to $v_3$ and strongly adjacent to $v_8$; $v_2$ is strongly antiadjacent to $v_5, v_6, v_7, v_8$ and strongly adjacent to $v_3$; $v_3, v_4$ are strongly antiadjacent to $v_7, v_8$; $v_5$ is strongly antiadjacent to $v_8$; $v_6$ is semiadjacent to $v_8$ and strongly adjacent to $v_7$; and the adjacency between the pairs $v_2v_4$ and $v_5v_7$ is arbitrary. Let $X \subseteq  \{v_3, v_4, v_5, v_6\}$, such that 
\begin{itemize}
\item $v_2$ is not strongly anticomplete to $\{v_3, v_4\}\backslash X$
\item $v_7$ is not strongly anticomplete to $\{v_5, v_6\} \backslash X$
\item if $v_4, v_5 \notin X$ then $v_2$ is adjacent to $v_4$ and $v_5$ is adjacent to $v_7$. 
\end{itemize}
Then $(H \backslash X, A,B \backslash X,C)$ is a three-cliqued claw-free trigraph. 
\end{itemize}
We denote by $\mathcal{TC}_5$ the class of such three-cliqued trigraphs (given by one of these two constructions) with the additional property that every vertex is in a triad. 

\end{itemize}

\section{Strips}\label{appendix:strips}

  \newcounter{Lcount}
  \begin{list}{$\mathcal{Z}_\arabic{Lcount}$:}
    {\usecounter{Lcount}
    \setlength{\rightmargin}{\leftmargin}}
 \item Let $H$ be a graph with vertex set $\{v_1,\ldots,v_n\}$, such that for $1 \leq i < j <k \leq n$, if $v_i,v_k$ are adjacent then $v_j$ is adjacent to both $v_i,v_k$.  Let $n \geq 2$, let $v_1,v_n$ be nonadjacent, and let there be no vertex adjacent to both $v_1$ and $v_n$.  Let $F' \subseteq V(H)^2$ be the set of pairs $\{v_i,v_j\}$ such that $i<j$, $v_i \neq v_1$ and $v_j \neq v_n$, $v_i$ is nonadjacent to $v_{j+1}$, and $v_j$ is nonadjacent to $v_{i-1}$.  Furthermore, let $F \subseteq F'$ such that every vertex of $H$ appears in at most one member of $F$.  Then $G$ is a \emph{fuzzy linear interval graph} if for some $H$ and $F$, $G$ is a thickening of $(H,F)$ with $|X_{v_1}|=|X_{v_n}|=1$.  Let $X_{v_1}=\{u_1\}$, $X_{v_n}=\{u_n\}$, and $Z=\{u_1,u_n\}$. $\mathcal{Z}_1$ is the set of all pairs $(G,Z)$.

\item Let $n \geq 2$.  Construct a graph $H$ as follows.  Its vertex set is the disjoint union of three sets $A,B,C$, where $|A|=|B|=n+1$ and $|C|=n$, say $A=\{a_0,a_1,\ldots,a_n\},B=\{b_0,b_1,\ldots,b_n\}$, and $C=\{c_1,\ldots,c_n\}$.  Adjacency is as follows.  $A,B,C$ are cliques.  For $0 \leq i,j \leq n$ with $(i,j) \neq (0,0)$, let $a_i,b_j$ be adjacent if and only if $i=j$, and for $1 \leq i \leq n$ and $0 \leq j \leq n$, let $c_i$ be adjacent to $a_j,b_j$ if and only if $i \neq j \neq 0$.  All other pairs not specified so far are nonadjacent.  Now let $X \subseteq A \cup B \cup C \setminus \{a_0,b_0\}$ with $|C \setminus X| \geq 2$.  Let $H'=H \setminus X$ and let $G$ be a thickening of $(H',F)$ with $|X_{a_0}|=|X_{b_0}|=1$  and $F \subseteq V(H')^2$ (we will not specify the possibilities for the set $F$ because they are technical and we will not need them in our proof).  Let $X_{a_0}=\{a_0'\}$, $X_{b_0}=\{b_0'\}$, and $Z=\{a_0',b_0'\}$. $\mathcal{Z}_2$ is the set of all pairs $(G,Z)$.

\item Let $H$ be a graph, and let $h_1\d h_2\d h_3\d h_4\d h_5$ be the vertices of a path of $H$ in order, such that $h_1,h_5$ both have degree one in $H$, and every edge of $H$ is incident with one of $h_2,h_3,h_4$.  Let $H'$ be obtained from the line graph of $H$ by making the edges $h_2h_3$ and $h_3h_4$ of $H$ (vertices of $H'$) nonadjacent.  Let $F \subseteq \{\{h_2h_3,h_3h_4\}\}$ and let $G$ be a thickening of $(H',F)$ with $|X_{h_1h_2}|=|X_{h_4h_5}|=1$.  Let $X_{h_1h_2}=\{u\}$, $X_{h_4h_5}=\{v\}$, and $Z=\{u,v\}$. $\mathcal{Z}_3$ is the set of all pairs $(G,Z)$.

\item Let $H$ be the graph with vertex set $\{a_0,a_1,a_2,b_0,b_1,b_2,b_3,c_1,c_2\}$ and adjacency as follows: $\{a_0,a_1,a_2\}, \{b_0,b_1,b_2,b_3\}, \{a_2,c_1,c_2\},$ and $\{a_1,b_1,c_2\}$ are cliques; $b_2,c_1$ are adjacent; and all other pairs are nonadjacent.  Let $F=\{\{b_2,c_2\},\{b_3,c_1\}\}$ and let $G$ be a thickening of $(H,F)$ with $|X_{a_0}|=|X_{b_0}|=1$.  Let $X_{a_0}=\{a_0'\}$, $X_{b_0}=\{b_0'\}$, and $Z=\{a_0',b_0'\}$. $\mathcal{Z}_4$ is the set of all pairs $(G,Z)$.

\item Let $H$ be the graph with vertex set $\{v_1,\ldots,v_{12}\}$, and with adjacency as follows.  $v_1\d \cdots \d v_6\d v_1$ is an induced cycle in $G$ of length 6.  Next, $v_7$ is adjacent to $v_1,v_2$; $v_8$ is adjacent to $v_4,v_5$; $v_9$ is adjacent to $v_6,v_1,v_2,v_3$; $v_{10}$ is adjacent to $v_3,v_4,v_5,v_6,v_9$; $v_{11}$ is adjacent to $v_3,v_4,v_6,v_1,v_9,v_{10}$; and $v_{12}$ is adjacent to $v_2,v_3,v_5,v_6,v_9,v_{10}$.  No other pairs are adjacent.  Let $H'$ be a graph isomorphic to $H \setminus X$ for some $X \subseteq \{v_{11},v_{12}\}$ and let $F \subseteq \{\{v_9,v_{10}\}\}$.  Let $G$ be a thickening of $(H',F)$ with $|X_{a_0}|=|X_{b_0}|=1$. Let $X_{v_7}=\{v_7'\}$, $X_{v_8}=\{v_8'\}$, and $Z=\{v_7',v_8'\}$. $\mathcal{Z}_5$ is the set of all pairs $(G,Z)$.

\end{list}
\end{document}